\newtheorem{theorem}{Theorem}[section]
\newtheorem{lemma}[theorem]{Lemma}
\newtheorem{proposition}[theorem]{Proposition}
\theoremstyle{definition}
\newtheorem{remark}[theorem]{Remark}
\newcommand{\ud}{\mathrm{d}}
\numberwithin{equation}{section}
\begin{document}
\pagenumbering{arabic}

\title[Edge scaling limits for non-Hermitian random matrices]{Edge scaling limits for a family of non-Hermitian random matrix ensembles}
\author{Martin Bender}\thanks{Supported by grant KAW 2005.0098 from the Knut and Alice Wallenberg Foundation}   
%Limit theorem for the  distribution of the maximal real part of the eigenvalues in a family of random matrix ensembles interpolating between Ginibre and GUE%Address: Department of Mathematics, Royal Institute of Technology (KTH), S-100 44 Stockholm, Sweden
% E-mail address: mbender@math.kth.se
%Tel.: +46-8-7906643; Fax: +46-8-7231788
\address{Department of mathematics, KTH, S-100 44, Stockholm, Sweden}
\email{mbender@math.kth.se}
\maketitle

\begin{abstract}

A family of random matrix ensembles
interpolating between the GUE and the Ginibre ensemble of $n\times n$
matrices with iid centered complex Gaussian 
entries is considered. The asymptotic spectral distribution 
in these models is uniform in an ellipse in the complex plane, 
which collapses 
to an interval of the real line as the degree of 
non-Hermiticity diminishes. 
Scaling limit theorems are proven for the eigenvalue point process at the 
rightmost edge of the spectrum, and it is shown that a non-trivial 
transition occurs between Poisson and Airy point process statistics 
when the ratio of the axes of the supporting 
ellipse is of order $n^{-1/3}$. In this regime, the family of 
limiting probability
distributions of the maximum of the real parts of the 
eigenvalues interpolates 
between the Gumbel and Tracy-Widom distributions.  

\end{abstract}

\section{Introduction of the model}

\subsection{Introduction}
The local statistics at the edge of the spectrum are of fundamental 
interest in the study of random matrix models. For the Gaussian unitary 
ensemble (GUE), defined as the probability measure
\begin{equation}
\ud \mathbb{P}^{1}_n(H)
=\left(\frac{n^{n^2}}{2^{n}\pi^{n^2}}\right)^{1/2}\exp\left\{-\frac{n}{2} 
\operatorname{Tr} H^2\right\}\ud H %\prod_{j\leq k} (\operatorname{Re}H_{jk})\prod_{j< k} \ud (\operatorname{Im}H_{jk})
\end{equation} 
on the space $\mathcal{H}_n\cong \mathbb{R}^{n^2}$ of Hermitian matrices, where $\ud H$ denotes Lebesgue measure, 
the celebrated work of Tracy and Widom, \cite{TracyWidomLevel},  
shows that the distribution of  the 
largest eigenvalue,
properly rescaled, converges  to $F_{TW}$, 
the Tracy-Widom distribution, which is 
expressible in terms of a solution to the Painlev\'e II differential 
equation. Their results
have been generalized to other models and it is conjectured that $F_{TW}$ is the universal limiting extreme value law for large classes 
of random matrix ensembles. 
This universality conjecture is supported by the fact that the 
Tracy-Widom distribution has appeared in a number of seemingly 
unrelated contexts, see for instance \cite{Baik}, \cite{JohanssonDisc} and
\cite{JohanssonPoly}; in some sense it is a natural extreme 
value distribution. 

In contrast, the classical extreme value theorem for a set 
of \emph{independent} identically distributed (iid)
random variables, say $X_i\in N(0,1/2)$, states that the distribution
of their appropriately shifted and rescaled maximum converges 
to the \emph{Gumbel} distribution, 
$F_{G}(x):=e^{-e^{-x}}$,
\begin{equation*}
\lim_{n\to \infty}\mathbb{P}\left[\frac{\max_{1\leq i\leq n}\{X_i\}-c_n}{a_n}
\leq t\right]= F_{G}(t),
\end{equation*}
where
\begin{equation*}
c_n=\sqrt{\log n}-\frac{\log(4\pi \log n)}{4\sqrt{\log n}},
\end{equation*}
and
\begin{equation*}
a_n=\frac{1}{2\sqrt{\log n}}.
\end{equation*}

Although the edge behaviours of these models have received much less
 attention in the literature, there are random matrix ensembles 
without symmetry conditions imposed so that the limiting spectral 
density is no longer supported on a line, but some other set in the 
complex plane. The simplest example of such a model is the classical
\emph{Ginibre ensemble} of matrices with independent centered 
complex Gaussian entries of variance $n^{-1}$, introduced in 
\cite{Ginibre}, for which the asymptotic spectral distribution is 
uniform in (and vanishes outside) the unit disc. There are 
several possible notions of ``extremeness'' that might be 
considered in this model; Rider \cite{Rider} shows that the 
spectral radius, that is, the maximum absolute value of the 
eigenvalues, is asymptotically Gumbel-distributed after a shifting 
and rescaling much the same as for the case of extremes of 
independent random variables. Intuitively, this can be understood 
as reflecting the fact that the few eigenvalues with largest 
modulus do not in general lie close to or feel the 
repulsion of each other,  and thus behave as if they 
were independent. 
Rider's method relies heavily on the radial symmetry of the 
problem; a closely related problem which generalizes easier 
to the family of models to be considered here is that of determining the 
distribution of the maximum 
of the \emph{real parts} of the eigenvalues instead of their 
absolute values.

\subsection{The ellipse ensemble}
 The fundamental difference between the edge statistics of the GUE 
and those of the Ginibre ensemble corresponds to the distinction between 
random matrix and Poisson statistics which has attracted a
great deal of attention from the mathematical 
physics community in 
connection with quantum chaos. In particular, it is illuminating to 
study families of models that exhibit a transition between 
the two types of behaviour, see e.g. \cite{JohanssonGumbel}. 
 Based on the observation
that a Ginibre matrix $A$ can equivalently be considered as a sum 
$A=(H_1+iH_2)/\sqrt{2}$ where $H_1$ and $H_2$ are two independent 
GUEs, a natural family of random matrix ensembles 
interpolating between the GUE and Ginibre ensembles can be defined as the 
distribution of 
 \begin{equation}
A=\sqrt{\frac{(1+\tau)}{2}}H_1+i\sqrt{\frac{(1-\tau)}{2}}H_2,
\end{equation} 
for $\tau\in [0,1]$. The parameter 
$\tau$ can thus be thought of as determining the extent to which $A$ fails
 to be Hermitian. 
In \cite{Fyodorov}, the authors define this \emph{ellipse ensemble} 
explicitly as the probability measure $\mathbb{P}^{\tau}_n$ on the 
space $\mathcal{M}_n$ of complex $n\times n$ matrices given, for $\tau< 1$, by
\begin{equation}
\ud \mathbb{P}^{\tau}_n(A)
=\left(\frac{n}{\pi\sqrt{(1-\tau^{2})}}\right)^{n^2}
\exp\left\{-\frac{n}{(1-\tau^{2})} 
\operatorname{Tr} (AA^{\dag}-\tau\operatorname{Re}{A^2})\right\}\ud A,
\end{equation} 
where 
%\begin{equation*}
%\ud A=\prod_{j=1}^{n}\prod_{k=1}^{n}\ud (\operatorname{Re}{A_{jk}}) \ud (\operatorname{Im}{A_{jk}})
%\end{equation*} 
$\ud A$ is Lebesgue measure on  $\mathcal{M}_n\cong \mathbb{R}^{2n^2}$. 
 It is easy to see that the cases $\tau=0$ and $\tau \to 1$ 
correspond to the Ginibre and Gaussian unitary ensembles 
respectively.
The successful analysis 
of this model depends crucially on the fact that, for every 
$\tau\in(0,1)$, the induced eigenvalue measure
\begin{multline}
\ud {{\mathbb{P}}_n^{\tau}}'(\zeta_1,\ldots, \zeta_n)\\
= \frac{1}{\mathcal{Z}_N^{\tau}}\prod_{1\leq j<k\leq n}|\zeta_k-\zeta_j|^2\exp\left\{-\frac{n}{(1-\tau^2)}\sum_{j=1}^n\left(|\zeta_j|^2-\tau\operatorname{Re}\zeta_j^2\right)\right\}\ud ^n \zeta_j,
\end{multline} 
where $\mathcal{Z}_N^{\tau}$ is a 
normalizing constant, can be considered 
as the distribution of a point process $Z_n^{\tau}$ on $\mathbb{R}^2$ 
($\cong\mathbb{C}$) with determinantal correlation functions 
given by the correlation kernel
\begin{multline}\label{defkernel} 
K_n^{\tau}\left((\xi_1,\eta_1),(\xi_2,\eta_2)\right)\\
=\frac{n}{\sqrt{\pi(1-\tau^2)}}
\sum_{k=0}^{n-1}\tau^k h_k\left(\sqrt{\frac{n}{2\tau}}\zeta_1\right)
h_k\left(\sqrt{\frac{n}{2\tau}}\overline{\zeta}_2\right)
\exp\left\{-\frac{n}{2}\left(\frac{\xi_1^2+\xi_2^2}{(1+\tau)}+\frac{\eta_1^2+\eta_2^2}{(1-\tau)}\right)\right\}.
\end{multline}
Here $\zeta_j=\xi_j+i\eta_j$, and $h_k$ are the normalized 
Hermite polynomials, 
that is, the orthonormal polynomials with respect to the measure
$e^{-x^2}\ud x$ on $\mathbb{R}$. (See Section 
\ref{determinantalpointprocesses} for definitions related 
to determinantal point processes.)
Furthermore, the rescaled Hermite polynomials appearing in 
(\ref{defkernel}) are orthogonal in the whole complex plane
 with respect to the corresponding exponential weight.
%measuere $\exp\{-n\left(\xi^2/(1+\tau)-\eta^2/(1-\tau)\right)\}\ud \xi \ud \eta$. 
The orthogonal polynomial 
techniques employed in the analysis of the GUE therefore 
become available for the ellipse ensemble as well, although 
considerable technical difficulties remain, since the 
Christoffel-Darboux formula no longer applies to simplify the 
sum of products of \emph{rescaled} Hermite polynomials.
 It is proven in \cite{Fyodorov} e.g. that for fixed $\tau\neq 1$, 
the limiting eigenvalue density is constant in the ellipse 
$\{(x,y):x^2/(1+\tau)^2+y^2/(1-\tau)^2\leq 1\}$, where $z=x+iy$. 
The authors  
also discover a regime of ``weak non-Hermiticity'', when
$\tau=\tau_n=1-\alpha^2/(2n)$ for a new parameter $0<\alpha<\infty$, 
in which a transition occurs in the local bulk statistics 
between the sine kernel behaviour of the GUE and the scaled
 correlation kernel of the Ginibre ensemble. In \cite{Verbaarschot}, 
it is shown heuristically that 
in this regime the local statistics near the rightmost edge
of the spectrum are given by a correlation kernel which factorizes 
as the Airy kernel in the $\xi$-variables multiplied by a 
Gaussian weight in the $\eta$-direction; thus the model 
essentially behaves like the GUE at the edge in this regime.

However, as the present work will show, there is another 
transitional regime, for $(1-\tau_n) \sim n^{-1/3}$, when 
the \emph{edge} statistics interpolate between those of 
the GUE and the Ginibre ensemble. Let 
$\sigma_n=n^{1/6}\sqrt{(1-\tau_n)}$. It is proven in Theorem 
\ref{main} that if $\sigma_n\to 0$, the rescaled eigenvalues 
near the rightmost edge of the spectrum converge 
to a point process on $\mathbb{R}^2$
which can be interpreted as the Airy point process in the 
$x$-direction with each particle subject to an iid centered 
Gaussian displacement in the $y$-direction.  Furthermore, 
the distribution of the maximum of the  real parts of 
the eigenvalues  converges to the Tracy-Widom distribution. 
The process is thus essentially one-dimensional and 
coincides with the corresponding point process at the edge 
of the GUE. (This is what is found in \cite{Verbaarschot} 
for the particular choice 
$\sigma_n \propto n^{-1/3}$.)
If  $\sigma_n\to  \infty$ the edge point process converges to a 
Poisson process on $\mathbb{R}^2$ with intensity  $\pi^{-1/2}e^{-x-y^2}$,
and in addition
the maximum of the real parts of the eigenvalues
converges in distribution to a Gumbel random variable. 
In the intermediate regime when 
$\sigma_n\to \sigma\in \mathbb{R}^+$ a new non-trivial 
interpolating point process on $\mathbb{R}^2$ arises in 
the $n\to \infty$ limit. It is again a determinantal 
point process with a rightmost particle almost surely,
but the correlation kernel no longer 
factorizes.

The edge behaviour in the various regimes can be interpreted
as follows: When $\sigma_n \to \infty$, the supporting ellipse 
collapses to an interval of the real line so slowly that the 
imaginary parts of the eigenvalues near the edge are of order
much greater than the spacing of their real parts. Therefore 
the eigenvalues are not close and do not interact, so in the 
appropriate scaling limit they behave as a Poisson process. 
On the other hand, when $\sigma_n \to 0$, the imaginary parts 
of the eigenvalues near the edge are negligible compared to 
the spacing of their real parts; hence they behave essentially 
like the particles in the Airy point process with independent 
fluctuations in the $y$-direction. In contrast, the case 
$\sigma_n\to \sigma$ corresponds precisely to the imaginary 
parts of the eigenvalues being of the same order of magnitude 
as the spacings of their real parts, and there is a non-trivial 
interaction between eigenvalues even though their 
displacement away from the real axis is no longer negligible.   

The proof relies in an essential way on 
finding, in the absence of a Christoffel-Darboux formula, 
a manageable representation of the sum of products of 
Hermite polynomials in (\ref{defkernel}). This is provided 
by the double integral formula of Lemma \ref{hermite}. 
The asymptotics of the correlation kernel can then be 
analyzed by means of saddle point techniques.    

Although  a very specific family of models, particularly accessible 
to analysis, is studied here,  it is reasonable 
to conjecture, in analogy with the universality conjectures 
for random matrix ensembles with purely real spectrum, that 
the type of transition and scale at which it occurs has a 
universal character that should be found in much greater 
generality for random matrix models in which the support of 
the spectral density collapses to a one-dimensional subset 
of the complex plane. Examples of other non-Hermitian ensembles
are provided by the model introduced in \cite{Wiegmann} and 
analyzed rigourously in \cite{Hedenmalm}.

\newpage

\section{Formulation of results}

\subsection{Preliminaries on determinantal point processes}
\label{determinantalpointprocesses}
For a comprehensive account of the theory of point processes, 
see \cite{Daley}. The material relating to determinantal processes 
can be found e.g. in \cite{Johansson}.

Let $\Lambda$ be a complete separable metric space. A Borel 
measure $\mu$ on
$\Lambda$ is a \emph{boundedly finite simple counting measure} 
if it takes 
non-negative integer values on bounded sets and 
$\mu(\{x\})\in\{0,1\}$ for every $x\in \Lambda$.
% $\mu$ is said to be \emph{simple}. 
The set $\mathcal{N}(\Lambda)$  of all 
boundedly finite simple counting 
measures on $\Lambda$ can be identified with the family of sets 
$x=\{x_i\}\subset \Lambda$ such that $x_i\neq x_j$ if 
$i\neq j$ and $|x\cap B|<\infty$ for every bounded $B\subset \Lambda$.
Define the smallest $\sigma$-algebra  on $\mathcal{N}(\Lambda)$ 
such that the mappings 
$\mathcal{N}(\Lambda) \ni \mu  \mapsto \mu(A)$ are measurable
for each Borel set $A\in\Lambda$, making $\mathcal{N}(\Lambda)$ 
a measurable space. A \emph{point process} $X=\{x_j\}$ on 
$\Lambda$ is a random element of $\mathcal{N}(\Lambda)$. 
 
Let $\lambda$ be a reference measure on $\Lambda$ (in our case, 
$\Lambda$ will be $\mathbb{R}$ or $\mathbb{R}^2$  and $\lambda$ 
Lebesgue measure). 
If $X$ is a point process on $\Lambda$ and, for some $n\geq 1$,
there exists a measurable function $\rho_n:\Lambda^n\to \mathbb{R}$ 
such that for every bounded measurable function $\varphi:\Lambda^n \to \mathbb{R}$,
\begin{equation}
\label{defcorrelation1}
\mathbb{E}\left[\sum_{x_{k_j}\in X}
%{\begin{array}{ll}\{x_{k_j}\}_{j=1}^n\subset X,
%\\|\{x_{k_j}\}_{j=1}^n|=n\end{array}}
\varphi(x_{k_1},\ldots,x_{k_n})\right]
=\int_{\Lambda^n}\varphi(\xi_1,\ldots,\xi_n)
\rho_n(\xi_1,\ldots,\xi_n)\ud^n\lambda(\xi),
\end{equation}
then $\rho_n$ is called an \emph{$n$-point correlation function}
 of $X$. (The sum on the left hand side of (\ref{defcorrelation1}) 
is over all $n$-tuples of distinct points in $X$, 
including permutations.)  It can be shown
that if $X$ has correlation functions 
$\rho_n$ for every $n\geq1$ (and the product over an empty 
index set is $1$ by definition), then
\begin{equation}
\label{defcorrelation2}
\mathbb{E}\left[\prod_j(1+\phi(x_j))\right]
=\sum_{n=0}^{\infty}\frac{1}{n!}\int_{\Lambda^n}\prod_{j=1}^n\phi(\xi_j)
\rho_n(\xi_1,\ldots,\xi_n)\ud^n\lambda (\xi)
\end{equation}
for every bounded measurable function $\phi$ with bounded support. 
(Here the product on the left hand side is over all particles in 
the point process; since there are only finitely many particles 
in each bounded set the product is finite for each realization of $X$).

A point process for which all correlation functions exist
 and are of the form 
\begin{equation*}
\rho_n(\xi_1,\ldots,\xi_n)
=\det\left(K(\xi_i,\xi_j)\right)_{i,j=1}^n 
\end{equation*} 
for some measurable function 
$K:\Lambda^2\to \mathbb{C}$ (a \emph{correlation kernel}) 
is called a \emph{determinantal (point) process}.
 The correlation kernel $K$ is said to be \emph{Hermitian} if
 \begin{equation*}
K(\xi_2,\xi_1)=\overline{K(\xi_1,\xi_2)}
\end{equation*}
 for all $(\xi_1,\xi_2)\in \Lambda^2$. 

Weak convergence in $\mathcal{N}(\Lambda)$ of a sequence 
$\{X_n\}$ of point processes to a limit $X$ is equivalent 
to weak convergence of the finite dimensional distributions, 
meaning that for any finite family
 $\{A_i\}_{i=1}^k$ of bounded disjoint Borel sets in 
$\Lambda$ and integers $m_i$,     
\begin{equation*}
\lim_{n\to \infty}\mathbb{P}_n\left(\bigcap_{i=1}^k\left[|X_n\cap A_i|=m_i\right]\right)
=\mathbb{P}\left(\bigcap_{i=1}^k\left[|X\cap A_i|=m_i\right]\right). 
 \end{equation*}  
 
Consider a determinantal point process 
$Z=\{z_j\}=\{(x_j,y_j)\}$ on $\mathbb{R}^2$. 
If there is a $t \in\mathbb{R}$ such that 
$\mathbb{E}\left[|Z\cap \left((t,\infty)\times \mathbb{R}\right)|\right]<\infty$,
 $Z$ is said to have a \emph{rightmost} or \emph{last} 
particle almost surely. The \emph{last particle distribution function},
 $F$, of $Z$ is then defined as 
\begin{equation}
F(t)=\mathbb{P}\left[\left|Z\cap\left((t,\infty)\times \mathbb{R}\right)\right|=0\right].
\end{equation}

To prove the limit theorems of this paper we will appeal to the 
following lemma, which is a fairly standard result. For completeness, a proof is given in an appendix.

\begin{lemma}\label{convergenceoflastparticle}
For every positive integer $n$, let  $Z_n$ be a determinantal point 
process on $\mathbb{R}^2$ with Hermitian correlation kernel $K_n$. 
 Suppose that $K_n(\zeta_1,\zeta_2)\to K(\zeta_1,\zeta_2)$ 
as $n\to \infty$ for 
every 
$(\zeta_1,\zeta_2)\in \mathbb{R}^4$, and that 
there is a measurable function $B:\mathbb{R}^2 \to \mathbb{R}$ with 
$B(\zeta)\geq|K_n(\zeta,\zeta)|$  for every $n$, such that for each 
$\xi_0\in \mathbb{R}$
\begin{equation*} 
\int_{(\xi_0,\infty)\times \mathbb{R}}B(\zeta)\ud \zeta=C_{\xi_0}<\infty.
\end{equation*} 
Then $Z_n$ converges weakly as $n\to \infty$ to a determinantal 
point process $Z$ 
with correlation kernel $K$, $Z_n$ has a last particle almost 
surely with distribution 
\begin{equation}\label{lastparticle}
F_n(t)
=\sum_{r=0}^{\infty}\frac{(-1)^r}{r!}\int_{\left((t,\infty)\times \mathbb{R}\right)^r}\det (K_n(\zeta_j,\zeta_k))_{j,k=1}^{r}\ud^r\zeta,
\end{equation}
 and $F_n$  converges to the last particle distribution $F$ of $Z$ as $n\to \infty$.
\end{lemma}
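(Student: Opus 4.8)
The plan is to reduce all three assertions to the generating functional identity \eqref{defcorrelation2} together with a single domination, furnished by the hypothesis on $B$. The starting observation is that, since each $K_n$ is a Hermitian correlation kernel of an honest point process, the matrices $\left(K_n(\zeta_i,\zeta_j)\right)_{i,j=1}^{r}$ are positive semi-definite for Lebesgue-almost every $(\zeta_1,\dots,\zeta_r)$, so that Hadamard's inequality yields
\begin{equation*}
0\le\det\left(K_n(\zeta_i,\zeta_j)\right)_{i,j=1}^{r}\le\prod_{j=1}^{r}K_n(\zeta_j,\zeta_j)\le\prod_{j=1}^{r}B(\zeta_j),
\end{equation*}
and, from the $2\times2$ minors, $|K_n(\zeta_1,\zeta_2)|\le\sqrt{B(\zeta_1)B(\zeta_2)}$. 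Writing $A=(\xi_0,\infty)\times\mathbb{R}$, we obtain $\mathbb{E}\left[|Z_n\cap A|\right]=\int_A K_n(\zeta,\zeta)\,\ud\zeta\le C_{\xi_0}<\infty$, so $Z_n$ has a last particle almost surely, and $\sum_{r\ge0}\frac{1}{r!}\int_{A^r}|\det(K_n(\zeta_i,\zeta_j))|\,\ud^r\zeta\le\sum_{r\ge0}C_{\xi_0}^{r}/r!=e^{C_{\xi_0}}$, uniformly in $n$.

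To obtain \eqref{lastparticle}, apply \eqref{defcorrelation2} with $\phi=-\mathbf{1}_{A_R}$, where $A_R=(t,R)\times(-R,R)$. The left side equals $\mathbb{P}_n[Z_n\cap A_R=\emptyset]$, which decreases to $F_n(t)$ as $R\to\infty$; the right side equals $\sum_{r}\frac{(-1)^r}{r!}\int_{A_R^r}\det(K_n(\zeta_i,\zeta_j))\,\ud^r\zeta$, and since $\prod_j B(\zeta_j)$ is integrable over $((t,\infty)\times\mathbb{R})^r$ and the $r$-sum is dominated by $e^{C_t}$, dominated convergence in $r$ and in $R$ lets me pass to the limit term by term and reach \eqref{lastparticle}.

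For the convergence statements I would use that $K_n\to K$ pointwise, so $\det(K_n(\zeta_i,\zeta_j))\to\det(K(\zeta_i,\zeta_j))$ pointwise with the uniform bound $\prod_j B(\zeta_j)$; dominated convergence in the spatial variables and then in $r$ yields $F_n(t)\to\sum_{r}\frac{(-1)^r}{r!}\int_{((t,\infty)\times\mathbb{R})^r}\det(K(\zeta_i,\zeta_j))\,\ud^r\zeta$. The limit $K$ is Hermitian, is locally trace class because $\int_A K(\zeta,\zeta)\le\int_A B<\infty$ for bounded $A$, and satisfies $0\le K\le I$ as an operator (test against compactly supported functions and pass to the limit from $0\le K_n\le I$, using $\sqrt{B(\zeta_1)B(\zeta_2)}$ to dominate), hence is the correlation kernel of a determinantal process $Z$; rerunning the previous paragraph with $K$ in place of $K_n$ identifies the displayed limit as the last-particle distribution $F$ of $Z$, so $F_n\to F$. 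Finally, for $Z_n\Rightarrow Z$ it suffices, by the criterion recalled above, to prove convergence of the finite-dimensional distributions: applying \eqref{defcorrelation2} with $\phi=\sum_{i=1}^{k}(s_i-1)\mathbf{1}_{A_i}$ for disjoint bounded Borel sets $A_i$ and $|s_i|\le1$, the same domination gives $\mathbb{E}_n[\prod_i s_i^{|Z_n\cap A_i|}]\to\mathbb{E}[\prod_i s_i^{|Z\cap A_i|}]$, and the continuity theorem for probability generating functions gives the joint convergence of $(|Z_n\cap A_i|)_{i=1}^{k}$.

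The routine content is the handful of dominated-convergence interchanges; the single substantive input is Hadamard's inequality, which converts the hypothesis $\int_{(\xi_0,\infty)\times\mathbb{R}}B<\infty$ into an absolutely summable bound on the Fredholm expansion that is uniform in $n$, and this is what legitimizes every limit. The one point requiring a little care, rather than a genuine obstacle, is checking that the pointwise limit $K$ really is the kernel of a determinantal point process (i.e. $0\le K\le I$ as a self-adjoint, locally trace-class operator), since only then is the limiting Fredholm series an actual last-particle distribution $F$.
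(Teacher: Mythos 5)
Your proof is correct and takes essentially the same route as the paper: Hadamard's inequality converts the bound on $B$ into an absolutely summable domination of the Fredholm expansion (this is exactly what the paper's Lemma \ref{lastparticlelemma} and the appendix proof do), dominated convergence then yields both the formula \eqref{lastparticle} and $F_n\to F$, and weak convergence is obtained through convergence of generating functionals via \eqref{defcorrelation2} (the paper packages this as Lemma \ref{convergencecriterion}, using Laplace transforms rather than your probability generating functions, a cosmetic difference). The only divergence is how the existence of $Z$ is justified — you check $0\le K\le I$ as a locally trace-class operator directly, while the paper defers to the consistency conditions in Soshnikov — but both are standard and equivalent.
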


\subsection{The interpolating process}
The \emph{Airy kernel} is the function 
$K_{A}:\mathbb{R}^2\to \mathbb{R}$ defined by 
\begin{equation*}
K_{A}(\xi_1,\xi_2)
=\frac{\operatorname{Ai}(\xi_1)\operatorname{Ai}'(\xi_2)
-\operatorname{Ai}'(\xi_1)\operatorname{Ai}(\xi_2)}{\xi_1-\xi_2}
\end{equation*}
 if $\xi_1\neq \xi_2$ and by continuity if $\xi_1=\xi_2$. 
Here $\operatorname{Ai}:\mathbb{R}\to\mathbb{R}$ is the 
$C^{\infty}$ \emph{Airy function}, 
\begin{equation*}
\operatorname{Ai}(t)
=\frac{1}{2\pi}\int_{\gamma}e^{\frac{i}{3} u^3+i u t}\ud u \in \mathbb{R}, 
\end{equation*}
and $\gamma$ is the contour $t\mapsto t+i \delta$ for some $\delta>0$,
 independently of the choice 
of $\delta$ by Cauchy's theorem. 
We will use the double integral representation 
\begin{equation}\label{doubleintegralairy}
K_{A}(\xi_1,\xi_2)
=\frac{1}{4\pi ^2}\iint_{\gamma,\gamma}
\frac{e^{\frac{i}{3}v^3+i\xi_2v+\frac{i}{3}u^3+i\xi_1u}}{i(u+v)}\ud u \ud v
\end{equation} 
of the Airy kernel.
It is well known  (see \cite{TracyWidomLevel} and \cite{Forrester}) 
that $K_{A}$ 
defines a trace class integral operator on $L^2(t,\infty)$ for 
any real $t$ and that it is the correlation kernel of a determinantal 
point process $X_A$ on $\mathbb{R}$, the \emph{Airy point process}. 
$X_A$ has 
a last particle almost surely, the distribution of which 
is known as the \emph{Tracy-Widom distribution}, given by the Fredholm
 determinant 
\begin{equation*}
F_{TW}(t):=\det(I-K_{A})_{L^2(t,\infty)}
=\sum_{k=0}^{\infty}\frac{(-1)^k}{k!}\int_{(t,\infty)^k}
\det (K_{A}(\xi_i,\xi_j))_{i,j=1}^{k}\ud^k \xi.
\end{equation*} 
Alternatively, 
\begin{equation}\label{painleve}
F_{TW}(t)=\exp\left\{-\int_t^{\infty}(x-t)q^2(x)\ud x\right\},
\end{equation}
where $q$ is the unique solution to the Painlev\'e II equation 
\begin{equation*}
q''(t)=tq(t)+2q(t)^3
\end{equation*} with the asymptotic behaviour
\begin{equation*}
q(t)\sim \operatorname{Ai}(t) \textrm{ as } t\to \infty. 
\end{equation*}
 
We now construct a point process $Z_A$ on $\mathbb{R}^2$ 
which, intuitively, to each realization $\{x_i\}$ of $X_A$ associates 
a realization $\{(x_i,y_i)\}$ of $Z_A$, where 
the $y_i$ are picked independently from $N(0,1/2)$. More precisely, 
for every $k\geq 1$, $Z_A$ should have correlation functions 
\begin{align}\label{productkernel}
\rho_k\left(\zeta_1,\ldots,\zeta_k\right)
&:=%\rho_k^{X}(\xi_1,\ldots,\xi_k)\prod_{i=1}^k 
%\frac{e^{-\eta_i^2}}{\sqrt{\pi}}\nonumber\\
\det (K_A(\xi_i,\xi_j))_{i,j=1}^{k}\prod_{i=1}^k
\frac{e^{-\eta_i^2}}{\sqrt{\pi}}\nonumber\\
&=\det \left(\frac{e^{-\frac{1}{2}(\eta_i^2+\eta_j^2)}}
{\sqrt{\pi}}K_A(\xi_i,\xi_j)\right)_{i,j=1}^{k},
\end{align}
making $Z_A$ a determinantal process as well.

A Poisson process $Z_P$ on $\mathbb{R}^2$ with intensity 
$\pi^{-1/2}e^{-\xi-\eta^2}$ 
can be viewed as a degenerate determinantal point process with 
correlation kernel
\begin{equation}
M_{P}(\zeta_1,\zeta_2)
=\delta_{\zeta_1\zeta_2}\frac{e^{-\xi_1-\eta_1^2}}{\sqrt{\pi}}
=\left\{ \begin{array}{ll}\pi^{-1/2}e^{-\xi_1-\eta_1^2}
\textrm{ if }\zeta_1=\zeta_2\\ 0 \textrm{ otherwise.}\end{array}\right.
\end{equation} 
To stress the analogy with the construction above, define
\begin{equation}
M_{P1}(\zeta_1,\zeta_2)
=\frac{e^{-\frac{1}{2}(\eta_1^2+\eta_2^2)}}{\sqrt{\pi}}K_{P}(\xi_1,\xi_2),
\end{equation}
where $K_{P}(\xi_1,\xi_2)=\delta_{\xi_1\xi_2}e^{-\xi_1}$ is the 
correlation 
kernel of a Poisson process $X_P$ on $\mathbb{R}$ with 
intensity $e^{-\xi}$.
$X_P$ too has a last particle almost surely, which is 
Gumbel-distributed, since 
\begin{multline*}
\sum_{n=0}^{\infty}\frac{(-1)^n}{n!}\int_{(t,\infty)^n}
\det (K_{P}(\xi_i,\xi_j))_{i,j=1}^{n}\ud^n\xi\\
=\sum_{n=0}^{\infty}\frac{(-1)^n}{n!}
\int_{(t,\infty)^n}\prod_{i=1}^ne^{-\xi_i}\ud^n \xi
=\sum_{n=0}^{\infty}\frac{(-1)^n}{n!}\left(e^{-t}\right)^n
=F_{G}(t).
\end{multline*}
For any $n\geq 1$, 
\begin{equation*}
\det (M_{P}(\zeta_j,\zeta_k))_{j,k=1}^{n}
=\det (M_{P1}(\zeta_j,\zeta_k))_{j,k=1}^{n} \textrm{ a.e. in } 
\mathbb{R}^{2n},
\end{equation*} 
so $M_{P1}$ can equivalently be chosen as a correlation kernel of $Z_P$. 
Similarly,
\begin{equation}
M_{P2}(\zeta_1,\zeta_2)
=\delta_{\eta_1\eta_2}
\frac{e^{-\eta_1^2-\frac{1}{2}(\xi_1^2+\xi_2^2)}}{\sqrt{\pi}}
\end{equation}
is also an equivalent correlation kernel of $Z_P$. The following proposition
sums up these observations.

\begin{proposition}\label{limitkernels}
The determinantal point 
processes $Z_{P}$ and $Z_{A}$ on $\mathbb{R}^2$ with correlation kernels
$M_{P}$ (or equivalently $M_{P1}$ or $M_{P2}$) and  
\begin{equation}
M_{A}(\zeta_1,\zeta_2)
=\frac{e^{-\frac{1}{2}(\eta_1^2+\eta_2^2)}}{\sqrt{\pi}}K_{A}(\xi_1,\xi_2)
\end{equation} 
%and
%\begin{equation}
%M_{P}(\zeta_1,\zeta_2)
%&=\delta_{\zeta_1,\zeta_2}\frac{e^{-\xi_1-\eta_1^2}}{\sqrt{\pi}},
%\end{equation}
respectively, both have last particles almost surely, 
with distribution functions $F_{G}$ and $F_{TW}$ respectively.
\end{proposition}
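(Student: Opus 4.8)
The plan is to verify directly that the claimed correlation functions define genuine point processes satisfying the hypotheses of Lemma~\ref{convergenceoflastparticle}, and then to compute the resulting last particle distributions as Fredholm determinants. The key point is that both $M_A$ (resp.\ $M_P$) factorize as a kernel in the $\xi$-variables times a product of Gaussian weights in the $\eta$-variables, which will let us reduce everything to the one-dimensional kernels $K_A$ (resp.\ $K_P$).

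First I would check that $M_A$ is a legitimate correlation kernel, i.e.\ that the functions $\rho_k$ in \eqref{productkernel} are the correlation functions of an actual point process. One way is to observe that the operator with kernel $M_A$ on $L^2((t,\infty)\times\mathbb{R})$ is conjugate (via multiplication by the invertible factor $e^{-\eta^2/2}/e^{-\eta^2/2}$, together with the integral-over-$\eta$ map) to $K_A$ tensored with the rank-one projection onto the Gaussian $\pi^{-1/4}e^{-\eta^2/2}$; since $K_A$ is trace class and positive on $L^2(t,\infty)$ and the Gaussian projection is trace class and positive, $M_A$ is a positive trace class operator with eigenvalues in $[0,1]$ on every such half-space, which is exactly the criterion (Soshnikov/Macch\`i) guaranteeing that a determinantal process with this kernel exists. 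Equivalently, and more in the spirit of the paragraph preceding the proposition, I would simply cite that $Z_A$ is the process obtained from $X_A$ by attaching an i.i.d.\ $N(0,1/2)$ displacement to each particle, and that independent displacement of a determinantal process preserves the determinantal structure with exactly the kernel in \eqref{productkernel}; the degenerate case $M_P$ is the same construction starting from the Poisson process $X_P$, and the alternative forms $M_{P1}, M_{P2}$ give the same determinants a.e.\ as already noted in the text.

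Next I would establish the existence of a last particle. For $Z_A$ we need $\mathbb{E}\,[\,|Z_A\cap((t,\infty)\times\mathbb{R})|\,]<\infty$, i.e.\ $\int_{(t,\infty)\times\mathbb{R}} M_A(\zeta,\zeta)\,\ud\zeta<\infty$. But $M_A(\zeta,\zeta)=\pi^{-1/2}e^{-\eta^2}K_A(\xi,\xi)$, so the double integral factors as $\left(\pi^{-1/2}\int_{\mathbb{R}}e^{-\eta^2}\ud\eta\right)\left(\int_t^\infty K_A(\xi,\xi)\,\ud\xi\right) = \int_t^\infty K_A(\xi,\xi)\,\ud\xi$, which is finite because $K_A$ is trace class on $L^2(t,\infty)$. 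The identical computation works for $M_P$ using $\int_t^\infty K_P(\xi,\xi)\,\ud\xi = \int_t^\infty e^{-\xi}\,\ud\xi = e^{-t}<\infty$. Hence both processes have a rightmost particle a.s., and the last particle distribution is given by the Fredholm-type series \eqref{lastparticle} with $K_n$ replaced by $M_A$, resp.\ $M_P$ (equivalently $M_{P1}$).

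Finally I would evaluate these series. For $Z_A$, expand $\det(M_A(\zeta_j,\zeta_k))_{j,k=1}^r = \det(K_A(\xi_j,\xi_k))_{j,k=1}^r \prod_{i=1}^r \pi^{-1/2}e^{-\eta_i^2}$ and integrate out each $\eta_i$ over $\mathbb{R}$, each such integral contributing a factor $1$; what remains is $\sum_r \frac{(-1)^r}{r!}\int_{(t,\infty)^r}\det(K_A(\xi_j,\xi_k))\,\ud^r\xi = \det(I-K_A)_{L^2(t,\infty)} = F_{TW}(t)$, which is exactly the definition of the Tracy--Widom distribution recalled in the excerpt (and the Painlev\'e II form \eqref{painleve}). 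For $Z_P$, using the form $M_{P1}(\zeta_1,\zeta_2)=\pi^{-1/2}e^{-(\eta_1^2+\eta_2^2)/2}K_P(\xi_1,\xi_2)$, the determinant again factors, the Gaussian integrations in $\eta$ yield $1$, and one is left with $\sum_n\frac{(-1)^n}{n!}\int_{(t,\infty)^n}\det(K_P(\xi_j,\xi_k))\,\ud^n\xi$, which is precisely the computation displayed just above the proposition: it collapses to $\sum_n\frac{(-1)^n}{n!}(e^{-t})^n = e^{-e^{-t}} = F_G(t)$.

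The routine parts are the integration-out of the Gaussian factors and the collapse of the Poisson determinant (both already essentially carried out in the text). The only genuine point requiring care — and the step I expect to be the main obstacle to write cleanly — is the first one: justifying that \eqref{productkernel} really are the correlation functions of a point process, i.e.\ identifying $Z_A$ (and $Z_P$) as the displacement of $X_A$ (and $X_P$) and checking that this operation has the claimed effect on the correlation kernel, or alternatively verifying the positivity/trace-class spectral conditions for $M_A$ on each half-space $(t,\infty)\times\mathbb{R}$. Once that is in place, existence of the last particle and the identification of its law are immediate from Lemma~\ref{convergenceoflastparticle} and the Fredholm determinant computations above.
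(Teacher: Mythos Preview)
Your proposal is correct and follows essentially the same approach as the paper: verify that the kernel $M(\zeta_1,\zeta_2)=\pi^{-1/2}e^{-(\eta_1^2+\eta_2^2)/2}K(\xi_1,\xi_2)$ has finite trace on $(t,\infty)\times\mathbb{R}$, then factor the determinants and integrate out the Gaussian $\eta$-variables in the Fredholm series. The paper's proof is just a two-sentence version of your argument; the only discrepancy is that the paper invokes Lemma~\ref{lastparticlelemma} (the single-process last-particle lemma in the appendix) rather than Lemma~\ref{convergenceoflastparticle}, and it does not revisit the existence of $Z_A$ as a process, taking the displacement construction described just before the proposition as sufficient justification---your extra care on that point is not needed for the paper's purposes but is not wrong.
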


\begin{proof} If $K$ is a Hermitian correlation kernel of a determinantal
 point process on $\mathbb{R}$ satisfying 
\begin{equation*}
\int_t^{\infty}K(\xi,\xi)\ud \xi<\infty,
\end{equation*}
it is immediate that 
$M(\zeta_1,\zeta_2)=\pi^{-1/2}e^{-\frac{1}{2}(\eta_1^2+\eta_2^2)}K(\xi_1,\xi_2)$ 
satisfies the corresponding conditions of Lemma \ref{lastparticlelemma}. 
Since the correlation functions factorize by (\ref{productkernel}),
the $\eta_i$-variables may be integrated out in the expression 
(\ref{lastparticle2}) for the last particle distribution.
\end{proof}

The following theorem asserts the existence of
 a family of determinantal point processes 
$\{Z_{\sigma}\}_{\sigma>0}$ on $\mathbb{R}^2$ which, appropriately 
rescaled, interpolates between  $Z_{A}$ and $Z_{P}$.

\begin{theorem}\label{interpolatingkernel}\emph{(Interpolating process)} 
For each $\sigma\in [0,\infty)$ there exists a determinantal point 
process 
\begin{equation*}
Z_{\sigma}=\left\{\left(x_j,y_j\right)\right\} 
\end{equation*}
 on $\mathbb{R}^2$ with correlation kernel 
\begin{equation}
M_{\sigma}(\zeta_1,\zeta_2)
=\frac{1}{4\pi^{5/2}}\int_{\gamma_1}\int_{\gamma_2}
\frac{e^{-\frac{1}{2}(\sigma v-\eta_2)^2+\frac{i}{3}v^3+i\xi_2v 
-\frac{1}{2}(\sigma u+\eta_1)^2+\frac{i}{3}u^3+i\xi_1u}}{i(u+v)}\ud u \ud v,
\end{equation}
where $\zeta_j=(\xi_j,\eta_j)$ and $\gamma_j$ is the contour 
$t \mapsto \gamma_j(t)=t+i\delta_j$, independently of the choice of $\delta_j>0$. 

Define the rescaled point process  
\begin{equation*}
\tilde{Z}_{\sigma}
=\left\{\left(\frac{x_j-c_{\sigma}}{a_{\sigma}},
\frac{y_j}{b_{\sigma}}\right)\right\},
\end{equation*} 
where
\begin{equation*} a_{\sigma}=\frac{\sigma}{\sqrt{6\log\sigma}}, 
\end{equation*}
 \begin{equation*} b_{\sigma}
=\frac{\sigma^{3/2}}{(6\log\sigma)^{1/4}},
\end{equation*}
and
\begin{equation*}
c_{\sigma}=a_{\sigma}\left(3\log\sigma-\frac{5}{4} \log (6\log \sigma)-\log(2 \pi)\right).
%\frac{\sigma\sqrt{6\log\sigma}}{2}-\frac{\sigma}{\sqrt{6\log\sigma}}\left(\frac{5}{4} \log (6\log \sigma)+\log(2 \pi)\right).
\end{equation*}
The processes $Z_{\sigma}$, appropriately rescaled, interpolate between 
$Z_{A}$ and $Z_{P}$ in the sense that  $Z_0=Z_A$  and  
$\tilde{Z}_{\sigma}$ converges weakly in $\mathcal{N}(\mathbb{R}^2)$ 
to $ Z_{P}$ as $\sigma \to \infty$.
%where $\tilde{Z}_{\sigma}$ is defined above.
%The convergence is in the sense of weak convergence in $\mathcal{N}(\mathbb{R}^2)$, cf \cite{Daley}.
For any fixed $\sigma$, $Z_{\sigma}$ has a last particle almost 
surely, with distribution function 
\begin{align}
\label{lastparticlesigma}F_{\sigma}(t)
&=\mathbb{P}_{\sigma}\left[\left|Z_{\sigma}
\cap\left((t,\infty)\times\mathbb{R}\right)\right|=0 \right]\nonumber\\
&=\det(I-M_{\sigma})_{L^2\left((t,\infty)\times \mathbb{R}\right)}\nonumber\\
&=\sum_{r=0}^{\infty}\frac{(-1)^r}{r!}
\int_{\left((t,\infty)\times \mathbb{R}\right)^r}
\det (M_{\sigma}(\zeta_j,\zeta_k))_{j,k=1}^{r}\ud^r\xi \ud^r\eta,
\end{align}
and furthermore 
\begin{equation}
F_{\sigma}(c_{\sigma}+a_{\sigma}t) \to F_{G}(t)
\textrm{ as } \sigma \to \infty.
%\left\{\begin{array}{ll}F_{\sigma}(t)\to  F_{TW}(t) 
%\textrm{ as } \sigma \to 0 \\ 
%F_{\sigma}(c_{\sigma}+a_{\sigma}t) \to F_{G}(t) 
%\textrm{ as } \sigma \to \infty.\end{array}\right.
\end{equation}
\end{theorem}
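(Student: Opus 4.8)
The plan is to establish, in order: that $M_{\sigma}$ is a genuine correlation kernel (so $Z_{\sigma}$ exists, is determinantal, and has a last particle obeying (\ref{lastparticlesigma})); the identity $Z_{0}=Z_{A}$; and the $\sigma\to\infty$ asymptotics, which is the heart of the matter. For existence, I would first put $M_{\sigma}$ into a ``Gram'' form: on the contours $\gamma_{j}(t)=t+i\delta_{j}$ one has $\operatorname{Im}(u+v)=\delta_{1}+\delta_{2}>0$, so $1/(i(u+v))$ equals, up to sign, a convergent integral of $e^{is(u+v)}$ over $s>0$; substituting and carrying out the $u$- and $v$-integrals gives
\[
M_{\sigma}(\zeta_{1},\zeta_{2})=\frac{1}{\sqrt{\pi}}\int_{0}^{\infty}\psi_{\sigma}(\xi_{1}+s,\eta_{1})\,\overline{\psi_{\sigma}(\xi_{2}+s,\eta_{2})}\,\ud s ,
\]
where $\psi_{\sigma}(\xi,\eta)=\frac{1}{2\pi}\int_{\gamma}e^{-\frac{1}{2}(\sigma u+\eta)^{2}+\frac{i}{3}u^{3}+i\xi u}\,\ud u$ and a short contour substitution shows $\overline{\psi_{\sigma}(\xi,\eta)}=\psi_{\sigma}(\xi,-\eta)$; hence $M_{\sigma}$ is Hermitian and the displayed formula exhibits it as a pointwise positive-semidefinite kernel. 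Writing $(Tf)(s)=\pi^{-1/4}\int_{\mathbb{R}^{2}}\overline{\psi_{\sigma}(\xi+s,\eta)}f(\xi,\eta)\,\ud\xi\,\ud\eta$, one gets $M_{\sigma}=T^{*}\mathbf{1}_{(0,\infty)}T$ as operators on $L^{2}(\mathbb{R}^{2})$, and a Parseval-plus-Gaussian computation gives $TT^{*}=I$ on $L^{2}(\mathbb{R})$ (the integral of the Gaussian weight $e^{-\eta^{2}}$ precisely cancelling the $\pi^{-1/2}$); so $T$ is a co-isometry, $T^{*}T$ is an orthogonal projection, and therefore $0\le M_{\sigma}\le I$. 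The cubic term in the exponent forces $M_{\sigma}$ to decay fast enough to be locally trace class with $\int_{(\xi_{0},\infty)\times\mathbb{R}}M_{\sigma}(\zeta,\zeta)\,\ud\zeta<\infty$; by the standard criterion characterising correlation kernels of determinantal processes, $M_{\sigma}$ is the kernel of a determinantal process $Z_{\sigma}$, and the existence of a last particle together with formula (\ref{lastparticlesigma}) follows from the same last-particle lemma invoked in the proof of Proposition \ref{limitkernels}. The identity $Z_{0}=Z_{A}$ is then immediate: at $\sigma=0$ the Gaussian terms in $M_{\sigma}$ become $e^{-\eta_{j}^{2}/2}$, which factor out of the double integral and leave $\pi^{-1/2}e^{-\frac{1}{2}(\eta_{1}^{2}+\eta_{2}^{2})}$ times exactly the representation (\ref{doubleintegralairy}) of $K_{A}$, so $M_{0}=M_{A}$ and a determinantal process is determined by its kernel.

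\textbf{The limit $\sigma\to\infty$.} Write $\tilde{M}_{\sigma}(\zeta_{1},\zeta_{2})=a_{\sigma}b_{\sigma}\,M_{\sigma}(c_{\sigma}+a_{\sigma}\xi_{1},b_{\sigma}\eta_{1};\,c_{\sigma}+a_{\sigma}\xi_{2},b_{\sigma}\eta_{2})$ for the correlation kernel of $\tilde{Z}_{\sigma}$. Using the double-integral representation I would carry out a saddle-point analysis of $M_{\sigma}$ with $\xi_{j}$ in a window around $c_{\sigma}$ and $\eta_{j}$ of order $b_{\sigma}$, proving: (i) on the diagonal, $\tilde{M}_{\sigma}(\zeta,\zeta)\to\pi^{-1/2}e^{-\xi-\eta^{2}}$, the intensity of $Z_{P}$; (ii) off the diagonal, $\tilde{M}_{\sigma}(\zeta_{1},\zeta_{2})\to 0$ whenever $\zeta_{1}\ne\zeta_{2}$; (iii) a $\sigma$-uniform bound $|\tilde{M}_{\sigma}(\zeta,\zeta)|\le B(\zeta)$ with $\int_{(\xi_{0},\infty)\times\mathbb{R}}B<\infty$. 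In the phase $\frac{i}{3}u^{3}-\frac{1}{2}(\sigma u+\eta)^{2}+i\xi u$ a cubic term (responsible for Airy behaviour at $\sigma=0$), a quadratic term of size $\sigma^{2}$, and a linear shift of logarithmic size (from $c_{\sigma}$) all compete; the scalings $a_{\sigma}=\sigma/\sqrt{6\log\sigma}$, $b_{\sigma}=\sigma^{3/2}/(6\log\sigma)^{1/4}$ are exactly those that balance them and place the dominant saddle correctly, and $c_{\sigma}$ is pinned down so that the leading exponential asymptotics produces $e^{-\xi-\eta^{2}}$ --- the two-dimensional, Gaussian-weighted analogue of the way the classical Gumbel constants $c_{n},a_{n}$ come out of optimising a Gaussian tail. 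Granting (i)--(iii), pointwise positivity yields the Cauchy--Schwarz bound $|\tilde{M}_{\sigma}(\zeta_{1},\zeta_{2})|\le(\tilde{M}_{\sigma}(\zeta_{1},\zeta_{1})\tilde{M}_{\sigma}(\zeta_{2},\zeta_{2}))^{1/2}$, so in $\det(\tilde{M}_{\sigma}(\zeta_{i},\zeta_{j}))_{i,j=1}^{k}$ every term containing an off-diagonal entry tends to $0$ and the $k$-point functions of $\tilde{Z}_{\sigma}$ converge to $\prod_{i}\pi^{-1/2}e^{-\xi_{i}-\eta_{i}^{2}}$, the $k$-point functions of $Z_{P}$. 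The bound $B$ then allows passage to the limit by dominated convergence in the expansions (\ref{defcorrelation2}) and (\ref{lastparticle}) --- this is the argument of Lemma \ref{convergenceoflastparticle}, adapted to the degenerate Poisson limit --- giving $\tilde{Z}_{\sigma}\to Z_{P}$ weakly in $\mathcal{N}(\mathbb{R}^{2})$ and
\[
F_{\sigma}(c_{\sigma}+a_{\sigma}t)=\mathbb{P}\bigl[\bigl|\tilde{Z}_{\sigma}\cap((t,\infty)\times\mathbb{R})\bigr|=0\bigr]\longrightarrow\mathbb{P}\bigl[\bigl|Z_{P}\cap((t,\infty)\times\mathbb{R})\bigr|=0\bigr]=F_{G}(t),
\]
the last equality being Proposition \ref{limitkernels}.

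\textbf{Main obstacle.} The hard part is the saddle-point analysis behind (i)--(iii): locating and deforming $\gamma_{j}$ through the relevant saddles of a phase in which a cubic, a term of order $\sigma^{2}$ and a logarithmically large linear term all matter; obtaining error terms uniform enough in $\zeta$ to provide the dominating function $B$; and --- decisive for (ii) --- checking that the singular factor $1/(i(u+v))$ is harmless, which holds because the dominant saddles in the $u$- and $v$-variables remain bounded away from $u=-v$. That separation of saddles is precisely the analytic shadow of the decorrelation of the extreme eigenvalues, and verifying that the stated $a_{\sigma},b_{\sigma},c_{\sigma}$ are the correct normalisers, with no residual oscillatory cross terms surviving, is the delicate computational core.
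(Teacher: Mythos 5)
Your proposal has two places where it departs substantively from the paper, and one of those departures contains a genuine error.

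\textbf{Existence of $Z_\sigma$.} You prove existence directly, by exhibiting the (deformed-Airy) Gram representation $M_\sigma = T^*\mathbf{1}_{(0,\infty)}T$ and checking $TT^*=I$, hence $0\le M_\sigma\le I$. The paper instead obtains existence of $Z_\sigma$, the last-particle property and formula~\eqref{lastparticlesigma} as a byproduct of the random-matrix convergence: it applies Lemma~\ref{convergenceoflastparticle} to the sequence $Z_n^{\tau_n}$ with $\sigma_n\to\sigma$, which is established in the proof of Theorem~\ref{main}~(ii). Both routes are legitimate; yours is more self-contained and gives the operator bound $M_\sigma\le I$ for free, while the paper's avoids a direct verification of positivity but makes the theorem logically downstream of Theorem~\ref{main}. (Minor point: with the paper's stated sign convention $1/(i(u+v))$ the Gram form you write comes out with an overall minus sign; since exactly the same issue is already present in~\eqref{doubleintegralairy}, this is evidently a sign typo in the paper and should be silently corrected.)

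\textbf{The $\sigma\to\infty$ limit.} Here the overall saddle-point strategy matches the paper, but your item~(ii), that $\tilde{M}_\sigma(\zeta_1,\zeta_2)\to 0$ whenever $\zeta_1\ne\zeta_2$, is false, and the reasoning built on it (``every term in $\det(\tilde{M}_\sigma(\zeta_i,\zeta_j))$ containing an off-diagonal entry tends to $0$'') does not go through as stated. The rescaled kernel $\tilde M_\sigma$ carries a rapidly oscillating phase. Concretely, when $\eta_1=\eta_2=\eta$ and $\xi_1\ne\xi_2$ one finds
\begin{equation*}
\tilde{M}_\sigma(\zeta_1,\zeta_2)
= e^{\,i\,\frac{a_\sigma b_\sigma}{\sigma}\,\eta(\xi_2-\xi_1)}
\left(\frac{e^{-\frac{1}{2}(\xi_1+\xi_2)-\eta^2}}{\sqrt\pi}+o(1)\right),
\qquad \frac{a_\sigma b_\sigma}{\sigma}=\frac{\sigma^{3/2}}{(6\log\sigma)^{3/4}}\to\infty,
\end{equation*}
so for $\eta\ne 0$ the kernel oscillates without limit, and for $\eta=0$ it converges to a nonzero value. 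What is true is that the modulus of the off-diagonal kernel tends to $0$ precisely when $\eta_1\ne\eta_2$, i.e.\ on a set whose complement has Lebesgue measure zero. The paper handles this exactly by the device you are missing: it multiplies $\tilde{M}_\sigma$ by the gauge factor $e^{F(\xi_1',\eta_1')-F(\xi_2',\eta_2')}$ with $F(\xi',\eta')=i\xi'\eta'/\sigma-i\eta'^3/3\sigma^3$ purely imaginary, producing an equivalent kernel $M_\sigma^*$ that \emph{does} converge pointwise everywhere to the degenerate Poisson kernel $M_{P2}$, and then feeds $M_\sigma^*$ into Lemma~\ref{convergenceoflastparticle}. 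To repair your argument you should either perform the same phase conjugation, or replace (ii) by the weaker statement that the off-diagonal entries vanish in the limit for almost every $(\zeta_1,\zeta_2)$ and then check that a.e.\ convergence of the determinants, combined with your dominating function from~(iii), suffices for the Laplace-transform and Fredholm-series limits; Lemma~\ref{convergenceoflastparticle} as stated assumes genuine pointwise convergence, so you would need to adapt it, as you hint at but do not actually carry out.

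The remaining points — $Z_0=Z_A$ via~\eqref{doubleintegralairy}, the choice of $a_\sigma,b_\sigma,c_\sigma$ to balance the cubic, $\sigma^2$-quadratic and logarithmic linear scales, and the use of a dominating function to pass to the limit in the determinant expansions — are in line with what the paper does.
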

%\begin{remark} Note that $M_0=M_A$, 
%so $Z_0=Z_A$ and $F_0=F_{TW}$.  
%\end{remark}
\begin{remark} To the author's knowledge, the processes $Z_{\sigma}$  
 have not been studied previously. In particular, 
we are not aware of any generalization to  
$\sigma>0$ of the representation (\ref{painleve}), for the case $\sigma=0$,
of the Fredholm determinant (\ref{lastparticlesigma}).  
\end{remark}
The proof is deferred to Section \ref{interpolationproof}.

\subsection{Edge scaling limits}
This section contains the statement of 
the main result, giving scaling limits around the 
rightmost particle of the eigenvalue point process $Z_n^{\tau_n}= \{z_j\}_{j=1}^n=\{(x_j,y_j)\}_{n=1}^n$
of the ellipse ensemble, defined formally as the 
measurable function 
\begin{equation*} Z_n^{\tau_n}:
(\mathcal{M}_n,\mathbb{P}_n^{\tau_n})\to \mathcal{N}(\mathbb{R}^2)
\end{equation*}
mapping $A\in \mathcal{M}_n$ to the set $\{z_j\}_{j=1}^n$ 
of eigenvalues of $A$ (with probability one, there are $n$ 
distinct (complex)
eigenvalues and we assume this to be the case).
 Recall that $Z_n^{\tau_n}$ is 
determinantal with correlation kernel $K_n^{\tau_n}$.

\begin{theorem}\label{main} 
Let  $\{\tau_n\}_{n=1}^{\infty}\subseteq [0,1)$
be given 
and put  
$\sigma_n=n^{1/6}\sqrt{(1-\tau_n)}$. For the choices 
of scaling parameters $\tilde{a}_n$, $\tilde{b}_n$ and 
$\tilde{c}_n$ specified below,  define the rescaled edge eigenvalue point process
\begin{equation*}
\tilde{Z}_n^{\tau_n}
=\left\{\left(\frac{x_j-\tilde{c}_n}{\tilde{a}_n},
\frac{y_j}{\tilde{b}_n}\right)\right\}_{j=1}^n,
\end{equation*} 
%where
%\begin{equation*}
%Z_n^{\tau_n}=\left\{\left(x_j,y_j\right)\right\}_{j=1}^n.
%\end{equation*}
and let
\begin{align*}
F_{n}^{\tau_n}(t)
&=\mathbb{P}_n^{\tau_n}\left[\left|\tilde{Z}_n^{\tau_n}\cap\left((t,\infty)\times\mathbb{R}\right)\right|=0 \right]\nonumber\\
&= \mathbb{P}_{n}^{\tau_n}\left[\frac{\max_{1\leq j\leq n}\{x_j\}
-\tilde{c}_n}{\tilde{a}_n}\leq t\right]
\end{align*}
be the last particle distribution of $\tilde{Z}_n^{\tau_n}$. 
\begin{itemize}
\item[(i)]\label{main1} Suppose $\sigma_n \to \infty$ as $n\to \infty$.
Choose
\begin{equation}\tilde{a}_n\label{gumbelreal}
={\hat{\tau}}^{1/2}_n\frac{\sigma_n n^{-2/3}}{\sqrt{6\log \sigma_n}},
\end{equation}

\begin{equation}
\label{gumbelimaginary}\tilde{b}_n
=\hat{\tau}_n^{-1/4}\frac{\sigma_n^{5/2}n^{-2/3}}{(6\log \sigma_n)^{1/4}},
\end{equation}
and
\begin{equation}
\label{gumbelmean}\tilde{c}_n
=2\hat{\tau}_n+ \tilde{a}_n\left(3\log \sigma_n-\frac{5}{4}\log(6\log \sigma_n)
-\log(2\pi\hat{\tau}_n^{3/4})\right),
\end{equation}
where $\hat{\tau}_n:=(1+\tau_n)/2$.
%Choose $\tilde{a}_n=a_n^P$, $\tilde{b}_n=b_n^P$ and $\tilde{c}_n=c_n^P$.
Then 
$\tilde{Z}_n^{\tau_n}$ 
converges weakly to 
$Z_{P}$ 
and 
$F_{n}^{\tau_n}(t)$ converges to $F_{G}(t)$  as $n\to \infty$.
%\item[(ii)] Suppose 
%$\sigma_n\to \sigma0$ as $n\to \infty$.
%Choose $\tilde{a}_n=n^{-2/3}$, $\tilde{b}_n=\sigma_nn^{-2/3}$ and 
%$\tilde{c}_n=(1+\tau_n)$ .
%Then $\tilde{N}_n^{\tau_n}$ converges weakly to $Z_{A}$ 
%and $F_{n}^{\tau_n}(t)$ converges to  $F_{TW}(t)$ as $n\to \infty$.
\item[(ii)]\label{main2} Suppose 
$\sigma_n\to \sigma \in [0,\infty)$ as $n\to \infty$.
Choose  $\tilde{a}_n=n^{-2/3}$, $\tilde{b}_n=\sigma_n n^{-2/3}$ and 
$\tilde{c}_n=(1+\tau_n)$.
Then $\tilde{Z}_n^{\tau_n}$ converges weakly to $Z_{\sigma}$ 
and $F_{n}^{\tau_n}(t)$ converges to $F_{\sigma}(t)$ as $n\to \infty$.

In particular, if $\sigma=0$, $\tilde{Z}_n^{\tau_n}$ converges weakly to $Z_{A}$ 
and $F_{n}^{\tau_n}(t)$ converges to  $F_{TW}(t)$. 
\end{itemize}
\end{theorem}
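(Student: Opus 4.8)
The plan is to apply Lemma \ref{convergenceoflastparticle}: in each regime it suffices to prove that the rescaled correlation kernels converge pointwise to the appropriate limit kernel ($M_P$ in (i), $M_\sigma$ in (ii)) and to exhibit an $n$-independent majorant $B$, integrable on every half-strip $(\xi_0,\infty)\times\mathbb{R}$, that bounds $|\tilde K_n^{\tau_n}(\zeta,\zeta)|$. Since $Z_n^{\tau_n}$ is determinantal with the kernel $K_n^{\tau_n}$ of (\ref{defkernel}), the rescaled process $\tilde Z_n^{\tau_n}$ is determinantal with kernel
\begin{equation*}
\tilde K_n^{\tau_n}(\zeta_1,\zeta_2)=\tilde a_n\tilde b_n\,K_n^{\tau_n}\!\left((\tilde c_n+\tilde a_n\xi_1,\tilde b_n\eta_1),(\tilde c_n+\tilde a_n\xi_2,\tilde b_n\eta_2)\right),
\end{equation*}
the prefactor $\tilde a_n\tilde b_n$ being the Jacobian of the coordinate change. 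By Proposition \ref{limitkernels} and Theorem \ref{interpolatingkernel} the candidate limit processes $Z_P$, $Z_\sigma$ and $Z_A$ all have last particles almost surely, so once the kernel convergence and the domination are in hand the convergence of $F_n^{\tau_n}$ to $F_G$, $F_\sigma$ or $F_{TW}$ drops out of the Fredholm determinant formula (\ref{lastparticle}). Everything thus reduces to the large-$n$ asymptotics of $\tilde K_n^{\tau_n}$.

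The analytic core is the sum $\sum_{k=0}^{n-1}\tau_n^k h_k(a)h_k(b)$ occurring in (\ref{defkernel}), to which the Christoffel--Darboux identity does not apply. I would use the double contour integral representation of Lemma \ref{hermite}, which rewrites this partial sum as an integral of the form $\iint e^{nS_n(u,v)}\,R_n(u,v)\,\ud u\,\ud v$ over suitable contours, with the discarded tail $\sum_{k\geq n}$ absorbed into a geometric-series factor. Inserting the edge scaling $\xi_j=2\hat\tau_n+\tilde a_n\xi_j$, $\eta_j=\tilde b_n\eta_j$ together with the identity $1-\tau_n=\sigma_n^2 n^{-1/3}$ (that is, $\sigma_n=n^{1/6}\sqrt{1-\tau_n}$), the phase $S_n$ acquires, at the rightmost point of the ellipse, a critical point which is degenerate to third order: the cubic term of its Taylor expansion is exactly what produces the $\tfrac i3 u^3+\tfrac i3 v^3$ in the Airy-type exponent, while the leading quadratic expansion transverse to the real axis produces the Gaussian factors $e^{-\frac12(\sigma u+\eta_1)^2}$ and $e^{-\frac12(\sigma v-\eta_2)^2}$, with $\sigma$ measuring the ratio of the transverse ($\eta$) length scale to the Airy ($\xi$) length scale. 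A steepest-descent deformation of the contours $\gamma_j$ through this saddle then isolates the leading term.

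In regime (ii), with $\sigma_n\to\sigma\in[0,\infty)$, the contours can be chosen essentially independent of $n$ and a dominated-convergence argument shows that the rescaled integral tends to the double integral defining $M_\sigma$; for $\sigma=0$ this is $M_A$, giving $Z_A$ and $F_{TW}$. In regime (i), with $\sigma_n\to\infty$, the scaling constants (\ref{gumbelreal})--(\ref{gumbelmean}) factor, up to $\hat\tau_n$-dependent powers reflecting the local geometry of the ellipse at its edge, as an $n^{-2/3}$-order ``intermediate'' rescaling followed by the $\sigma$-dependent rescaling $(a_{\sigma_n},b_{\sigma_n},c_{\sigma_n})$ appearing in Theorem \ref{interpolatingkernel}. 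The natural route is then to prove the saddle-point estimate with enough uniformity that $\tilde K_n^{\tau_n}$ is close to the $\sigma_n$-kernel $M_{\sigma_n}$ even as $\sigma_n\to\infty$, and to combine this with the limit $\tilde Z_{\sigma}\to Z_P$ ($\sigma\to\infty$) already established in Theorem \ref{interpolatingkernel}; equivalently one redoes the steepest-descent analysis directly with the growing parameter, a regime in which the cubic term becomes subdominant and a single simple saddle governs the asymptotics, yielding the diagonal Poisson kernel $M_P$ and the Gumbel law $F_G$.

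The step I expect to be the main obstacle is the uniform control of the errors in the steepest-descent analysis: one must deform $\gamma_j$ past the saddle while keeping the off-saddle part exponentially negligible, bound the contribution of the neglected Hermite tail, and --- this being essential for Lemma \ref{convergenceoflastparticle} --- distill from these estimates a single integrable majorant $B(\zeta)$ for $|\tilde K_n^{\tau_n}(\zeta,\zeta)|$ over each half-strip, with Gaussian decay in $\eta$ and the rapid decay in $\xi$ inherited from the Airy tail. Making all of this uniform throughout the regime $\sigma_n\to\infty$, where the third-order degeneracy of the saddle is resolved and the saddle structure changes character, is the genuinely delicate point; the remainder is bookkeeping with the explicit double integral.
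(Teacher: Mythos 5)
Your proposal follows essentially the same route as the paper: reduce to pointwise convergence of the rescaled correlation kernel plus an integrable dominating function via Lemma \ref{convergenceoflastparticle}, pass to the double contour integral formula of Lemma \ref{hermite}, and carry out a steepest-descent analysis of $f_n(w)=n\log w+w^2-c_nw$ in which the two real saddle points $w_\pm$ coalesce as $\tau_n\to 1$, the cubic terms then producing the $\tfrac{i}{3}u^3+\tfrac{i}{3}v^3$ Airy-type exponent when $\sigma_n$ stays bounded, while the quadratic terms dominate and a Gaussian approximation (with the contributing points shifted off the saddles by $\mathcal{O}(b_n\eta_j)$) yields the Poisson kernel when $\sigma_n\to\infty$. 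The difficulties you flag --- uniform tail estimates, control of the vanishing denominator $w_1-\tau_nw_2$ (forcing a small contour shift when $\delta_n=0$), and extracting an integrable majorant --- are precisely where the paper's technical work lives, and the paper handles the two regimes by separate direct steepest-descent arguments rather than a uniform bound on $\tilde K_n^{\tau_n}-M_{\sigma_n}$.
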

\begin{remark} Although the case $\tau_n=0$ has not been explicitly 
included in the calculations, a similar (simpler) argument can be made
 in this case (see the comment following (\ref{integralkernel})); as can be expected from the fact that the ellipse
 ensemble for constant $\tau\in (0,1)$ is just a trivial rescaling
 of the Ginibre ensemble (see \cite{Fyodorov}), formally inserting 
$\tau_n=0$ in (\ref{gumbelreal}) through 
(\ref{gumbelmean}) indeed gives the correct limit theorem
% (\ref{gumbelclt}) 
for the pure Ginibre ensemble as well.
\end{remark} 
\begin{remark} In view of the scaling limits in Theorems 
\ref{interpolatingkernel} and \ref{main} (ii), 
the scaling limit in Theorem \ref{main} (i) 
is the only reasonable candidate 
(as long as $\tau_n\to 1$); together these theorems in essence 
assert that $\tilde{Z}_n^{\tau(\sigma)}$ converges to  
$Z_P$ regardless of 
whether $\sigma$ tends to infinity with $n$ or the 
$n$ limit is taken first.% and similarly for the case $\sigma \to 0$.
\end{remark}
\section{Proof of  Theorem \ref{interpolatingkernel}}\label{interpolationproof}
The asymptotic notation defined in the first paragraph of Section \ref{mainproof} is used in this section as well, with $n$ replaced by $\sigma$.
\begin{proof}[Proof of Theorem \ref{interpolatingkernel}]
The existence of $Z_{\sigma}$, and the fact that it has a last 
particle with distribution (\ref{lastparticlesigma}), follows by Lemma 
\ref{convergenceoflastparticle} from the convergence of  $Z_n^{\tau_n}$  
when $(\tau_n)_{n=1}^{\infty}$ is a sequence such that  
$n^{1/6}\sqrt{(1-\tau_n)} \to \sigma$ as $n\to \infty$.
This is established in the proof of Theorem \ref{main}.
%It is immediate that $M_{\sigma}$ converges point-wise to $M_{A}$ 
%as $\sigma \to 0$.
Note that $Z_0$ coincides with $Z_A$, by (\ref{doubleintegralairy}).

To prove weak convergence of $\tilde{Z}_\sigma$ to $Z_P$ using
Lemma \ref{convergenceoflastparticle}, we first prove point-wise convergence of 
the correlation kernels.  
From the definition of correlation functions 
(\ref{defcorrelation1}), 
it is clear that the rescaled point process $\tilde{Z}_{\sigma}$ is 
determinantal with correlation kernel  
\begin{equation}
\tilde{M}_{\sigma}(\zeta_1,\zeta_2):
=a_{\sigma}b_{\sigma}M_{\sigma}((c_{\sigma}
+a_{\sigma}\xi_1,b_{\sigma}\eta_1),(c_{\sigma}
+a_{\sigma}\xi_2,b_{\sigma}\eta_2)).
\end{equation}
 We fix $(\zeta_1,\zeta_2)=\left((\xi_1,\eta_1),
(\xi_2,\eta_2)\right)\in \mathbb{R}^4$ and calculate 
the $\sigma\to \infty$ limit of $\tilde{M}_{\sigma}$ by a 
saddle point argument.
Let 
\begin{equation}
f_{\sigma}(u)=-\frac{1}{2}(\sigma u+\eta'_1)^2+\frac{i}{3}u^3+i\xi'_1u,
\end{equation}
and
\begin{equation}
g_{\sigma}(v)=-\frac{1}{2}(\sigma v-\eta'_2)^2+\frac{i}{3}v^3+i\xi'_2v,
\end{equation} 
where $\xi'_j=c_{\sigma}+a_{\sigma}\xi_j$ and $\eta'_j=b_{\sigma}\eta_j$.
Choose the saddle points  $u_0$ of $f_{\sigma}$ and $v_0$ of  
$g_{\sigma}$ satisfying
\begin{equation*}
u_0
=-\frac{\eta'_1}{\sigma}+\frac{i\xi'_1}{\sigma^2}
+\frac{2i{\eta'_1}^2}{\sigma^4}
+\frac{2\xi'_1\eta'_1}{\sigma^5}
+\mathcal{O}\left(\frac{1}{\sigma^{5/2}(\log \sigma)^{3/4}}\right),
\end{equation*} 
and
\begin{equation*}
v_0=\frac{\eta'_2}{\sigma}+\frac{i\xi'_2}{\sigma^2}
+\frac{2i{\eta'_2}^2}{\sigma^4}-\frac{2\xi'_2\eta'_2}{\sigma^5}
+\mathcal{O}\left(\frac{1}{\sigma^{5/2}(\log \sigma)^{3/4}}\right),
\end{equation*} 
respectively.
Note that
\begin{equation*}
g_{\sigma}''(v_0)
=-\sigma^2+2iv_0
=-\sigma^2\left(1+\mathcal{O}\left(\frac{1}{\sigma^{3/2}(\log\sigma)^{1/4}}\right)\right)
%=-\sigma^2\left(1+\eta'_2\mathcal{O}\left(\sigma^{-3}\right)
%+\xi'_2\mathcal{O}\left(\sigma^{-4}\right)\right)
\end{equation*}
 and similarly
$f_{\sigma}''(u_0)=-\sigma^2+2iu_0$.
%=-\sigma^2\left(1+\eta'_1\mathcal{O}\left(\sigma^{-3}\right)\right)$
After the change of variables $s=\sigma(u-u_0)$, $t=\sigma (v-v_0)$, 
and choosing $\delta_1=\operatorname{Im} (u_0)$ and 
$\delta_2=\operatorname{Im} (v_0)$ so that the new contours of 
integration become the real axis,  the rescaled kernel can be written
\begin{equation}\label{Mkernel}
\tilde{M}_{\sigma}(\zeta_1,\zeta_2)
=\frac{a_{\sigma}b_{\sigma}}{4\pi^{5/2}\sigma^2}\iint_{\mathbb{R}^2}
\frac{e^{f_{\sigma}(s/\sigma+u_0)+g_{\sigma}(t/\sigma+v_0)}}
{i(t/\sigma+u_0+s/\sigma+v_0)}\ud s \ud t.
\end{equation}
The main contribution comes from near the saddle points; 
let $I=(-r_0,r_0)$ for some $1\ll r_0 \ll \sigma^{3/4}(\log \sigma)^{1/8}$ and
note that
\begin{multline}
\label{globalinterpolating}
\left|\frac{a_{\sigma}b_{\sigma}}{4\pi^{5/2}\sigma^2}
\iint_{\mathbb{R}\times\left(\mathbb{R}\setminus I\right)}
\frac{e^{f_{\sigma}(s/\sigma+u_0)+g_{\sigma}(t/\sigma+v_0)}}
{i(t/\sigma+u_0+s/\sigma+v_0)}\ud s \ud t\right|\\
\leq\frac{a_{\sigma}b_{\sigma}}{4\pi^{5/2}\sigma^2(\delta_1+\delta_2)}
\left|\int_{\mathbb{R}}e^{f_{\sigma}(s/\sigma+u_0)}\ud s\right|
\left|\int_{\mathbb{R}\setminus I}e^{g_{\sigma}(t/\sigma+v_0)}\ud t \right|.
\end{multline}
Now
\begin{multline*}
g_{\sigma}(t/\sigma+v_0)\\
=g_{\sigma}(v_0)+t^2\left(-\frac{1}{2}
+\frac{iv_0}{\sigma^2}\right)+ t\left(-\sigma v_0+\eta'_2
+\frac{iv_0^2}{\sigma}+\frac{i\xi'_2}{\sigma} \right)
+\frac{it^3}{3\sigma^3},
\end{multline*}
so putting
\begin{equation*}\epsilon_1=\operatorname{Re}
\left(\frac{iv_0}{\sigma^2}\right)
=\mathcal{O}\left(\frac{1}{\sigma^{3/2}(\log\sigma)^{1/4}}\right)
\end{equation*}
and
\begin{equation*}
\epsilon_2=\operatorname{Re}\left(-\sigma v_0+\eta'_2+\frac{iv_0^2}{\sigma}
+\frac{i\xi'_2}{\sigma}\right)
=\mathcal{O}\left(\frac{(\log\sigma)^{1/4}}{\sigma^{3/2}}\right)
\end{equation*}
gives
\begin{equation*}
\left|\int_{\mathbb{R}\setminus I}e^{g_{\sigma}(t/\sigma+v_0)}\ud t \right|
\leq \left|e^{g_{\sigma}(v_0)}\right|\int_{\mathbb{R}\setminus I}
e^{ t^2\left(-\frac{1}{2}-\epsilon_1\right)
+\epsilon_2t}\ud t
\leq C\frac{e^{-r_0^2/3}}{r_0}\left|e^{g_{\sigma}(v_0)}\right|.
\end{equation*}
By the same argument for $f_{\sigma}$, and 
(\ref{globalinterpolating}), it follows that
\begin{multline}\label{Mtails}
\left|\frac{a_{\sigma}b_{\sigma}}{4\pi^{5/2}\sigma^2}
\iint_{\mathbb{R}^2\setminus I^2}
\frac{e^{f_{\sigma}(s/\sigma+u_0)+g_{\sigma}(t/\sigma+v_0)}}
{i(t/\sigma+u_0+s/\sigma+v_0)}\ud s \ud t\right|\\
\leq C_1\frac{e^{-r_0^2/3}}{r_0}\frac{a_{\sigma}b_{\sigma}\left|e^{g_{\sigma}(v_0)
+f_{\sigma}(u_0)}\right|}{\sigma^2(\delta_1+\delta_2)}
\leq C\frac{e^{-r_0^2/3}}{r_0}.
\end{multline}
Since 
$\operatorname{Im}(\sigma t+\sigma^2u_0
+\sigma s+\sigma^2v_0)\asymp 2c_{\sigma}>0$, we can write
\begin{equation*}
\frac{1}{i(\sigma t+\sigma^2u_0+\sigma s+\sigma^2v_0)}
=-\int_0^{\infty}e^{ip(\sigma(t+s)+\sigma^{2}(u_0+v_0))}\ud p.
\end{equation*}
Therefore, using (\ref{Mtails}) and Fubini's theorem,
(\ref{Mkernel}) becomes
\begin{align}\label{Mkernel2}
\tilde{M}_{\sigma}(\zeta_1,\zeta_2)
=&\frac{a_{\sigma}b_{\sigma}}{4\pi^{5/2}\sigma^2}\iint_{I^2}
\frac{e^{f_{\sigma}(s/\sigma+u_0)+g_{\sigma}(t/\sigma+v_0)}}
{i(t/\sigma+u_0+s/\sigma+v_0)}\ud s \ud t+o(1)\nonumber\\
%&=&\frac{a_{\sigma}b_{\sigma}e^{g_{\sigma}(v_0)
%+f_{\sigma}(u_0)}}{4\pi^{5/2}\sigma^2}\nonumber\\
%&&\times\iint_{I^2}
%\frac{e^{\frac{1}{2\sigma^2}(f_{\sigma}''(u_0)s^2+g_{\sigma}''(v_0)t^2)
%+\frac{i}{3\sigma^3}(s^3+t^3)}}
%{i(t/\sigma+u_0+s/\sigma+v_0)}\ud s \ud t(1+o(1))\nonumber\\
=&\frac{-a_{\sigma}b_{\sigma}e^{g_{\sigma}(v_0)+f_{\sigma}(u_0)}}
{4\pi^{5/2}}\nonumber\\
&\times \iint_{I^2}\int_0^{\infty}e^{-\frac{1}{2}(s^2+t^2)
+ip(\sigma( t+s) + \sigma^{2}(u_0+v_0))}\ud p \ud s \ud t+o(1)\nonumber\\
%{i( s+r + \sigma^{-1}(\eta'_2-\eta'_1))-\sigma^{-2}(\xi'_1+\xi'_2)}\ud r \ud s\nonumber\\
%&=&\frac{-a_{\sigma}b_{\sigma}
%\exp\{f_{\sigma}(v_0)+g_{\sigma}(u_0)\}}{4\pi^{5/2}}\left(1+o(1)\right)\nonumber\\
%&&\times\iint_{\mathbb{R}^2} \int_0^{\infty}e^{\frac{1}{2}(g_{\sigma}''(u_0)r^2+f_{\sigma}''(v_0)s^2)
%+p(i( s+r + \sigma^{-1}(\eta'_2-\eta'_1))-\sigma^{-2}(\xi'_1+\xi'_2))}\ud p \ud r \ud s\nonumber\\
=&\frac{-a_{\sigma}b_{\sigma}e^{g_{\sigma}(v_0)+f_{\sigma}(u_0)}}
{2\pi^{3/2}}\int_0^{\infty}e^{-\sigma^2p^2
+i\sigma^2 p(u_0+v_0)}\ud p+o(1)\nonumber\\
=&\frac{-a_{\sigma}b_{\sigma}e^{g_{\sigma}(v_0)
+f_{\sigma}(u_0)}}{2\pi^{3/2}i\sigma^2(u_0+v_0)}+o(1),
%\left(1+(|y_1|+|y_2|)\mathcal{O}
%\left(\sigma^{-3/2}(\log \sigma)^{-1/4}\right)\right)
\end{align}
where the last equality follows from the estimate
\begin{multline}
\label{PI}\left|\int_0^{\infty}
e^{-\alpha_{\sigma} r^2-(\beta_{\sigma}+i\gamma_{\sigma}) r}\ud r
-\frac{1}{\beta_{\sigma}+i\gamma_{\sigma}}\right|\\
\leq C\frac{1}{|\beta_{\sigma}+i\gamma_{\sigma}|}\left(\left(\frac{\sqrt{\alpha_{\sigma}}}{\beta_{\sigma}}\right)^2+e^{-\beta_{\sigma}/\sqrt{\alpha_{\sigma}}}\right),
\end{multline} 
obtained by integration by parts; in this case $\beta_n/\sqrt{\alpha_n}\asymp\sqrt{6\log \sigma}\gg 1$.
% In this case the error term in Equation (\ref{PI}) is of order $\sqrt{\sigma}/\sqrt{\xi'_i}=\mathcal{O}\left((\log{\sigma})^{-1/4}\right)\ll 1$.
Factors of the form $\exp\{F(\xi'_1,\eta'_1)-F(\xi'_2,\eta'_2)\}$ can be 
freely multiplied to the correlation kernel without changing the correlation 
functions. Taking $F(\xi',\eta')=i\xi'\eta'/\sigma-i\eta'^3/3\sigma^3$ and 
calculating 
\begin{equation*}
f_{\sigma}(u_0)= 
\frac{i\xi'_1\eta'_1}{\sigma}-\frac{{\xi'_1}^2}{2\sigma^2}
-\frac{i{\eta'_1}^3}{3\sigma^3}
-\frac{\xi'_1{\eta'_1}^2}{\sigma^4}
+\mathcal{O}\left(\frac{(\log \sigma)^{3/4}}{\sigma^{3/2}}\right)%+{\xi'_1}^2\eta'_1\mathcal{O}(\sigma^{-5}),
\end{equation*}
and
\begin{equation*}g_{\sigma}(v_0)
%&=&-\frac{1}{2}(\sigma v_0-\eta'_2)^2+\frac{i}{3}v_0^3+i\xi'_2v_0\nonumber\\
%&=& -\frac{1}{2}\left(\frac{i\xi'_2}{\sigma}+\frac{2i\eta'_2^2}{\sigma^3}\right)^2+\frac{i}{3}\left(\frac{\eta'_2}{\sigma}+\frac{i\xi'_2}{\sigma^2}\right)^3+i\xi'_2\left(\frac{\eta'_2}{\sigma}+\frac{i\xi'_2}{\sigma^2}+\frac{2i\eta'_2^2}{\sigma^4}\right)\nonumber\\
%&&+\xi'_2^2\eta'_2\mathcal{O}(\sigma^{-5})\nonumber\\
=-\frac{i\xi'_2\eta'_2}{\sigma}-\frac{{\xi'_2}^2}{2\sigma^2}
+\frac{i{\eta'_2}^3}{3\sigma^3}-\frac{\xi'_2{\eta'_2}^2}{\sigma^4}
+\mathcal{O}\left(\frac{(\log \sigma)^{3/4}}{\sigma^{3/2}}\right),%\xi'_2^2\eta'_2\mathcal{O}(\sigma^{-5}),
\end{equation*}
Equation (\ref{Mkernel2}) gives the equivalent kernel
\begin{align*}
{M}^*_{\sigma}(\zeta_1,\zeta_2)
&=e^{F(\xi'_1,\eta'_1)-F(\xi'_2,\eta'_2)}
\tilde{M}_{\sigma}(\zeta_1,\zeta_2)\nonumber\\
%&=&\frac{a_{\sigma}b_{\sigma}
%\exp\left\{ -\frac{{\xi'_2}^2}{2\sigma^2}-\frac{\xi'_2{\eta'_2}^2}{\sigma^4}
%-\frac{{\xi'_1}^2}{2\sigma^2}-\frac{\xi'_1{\eta'_1}^2}{\sigma^4} \right\}}
%{2\pi^{3/2}\left((\xi'_1+\xi'_2)+i\sigma(\eta'_1-\eta'_2)\right)}
%\left(1+o(1)\right)\nonumber\\
%&=&\frac{a_{\sigma}b_{\sigma}
%\exp\left\{ -\frac{c_{\sigma}^2}{\sigma^2}
%-\frac{c_{\sigma}a_{\sigma}(\xi_1+\xi_2)}{\sigma^2}
%-\frac{c_{\sigma}b_{\sigma}^2(\eta_1^2+\eta_2^2)}{\sigma^4} \right\}}
%{2\pi^{3/2}\left(2c_{\sigma}+i\sigma b_{\sigma} (\eta_1-\eta_2)\right)}
%\left(1+o(1)\right)\nonumber\\
&=\frac{e^{-\frac{1}{2}(\xi_1+\xi_2)
-\frac{1}{2}(\eta_1^2+\eta_2^2)}}{\sqrt{\pi}
\left(1+\frac{i}{2}\sigma^{3/2} (6\log \sigma)^{1/4} 
(\eta_1-\eta_2)\right)}+o(1)\nonumber\\
&\to M_{P2}(\zeta_1,\zeta_2) \textrm{ as } \sigma \to \infty.
\end{align*}

It is easy to verify that 
$M_{\sigma}(\zeta_2,\zeta_1)=\overline{M_{\sigma}(\zeta_2,\zeta_1)}$ 
for all $(\zeta_1,\zeta_2) \in \mathbb{R}^4$. 
To prove weak convergence and convergence of the last particle distributions 
it suffices by Lemma \ref{convergenceoflastparticle} to show that there is
a function 
%functions $B_0$ and 
$B_{\infty}$, which is integrable on 
$(\xi_0,\infty)\times\mathbb{R}$ for every $\xi_0\in \mathbb{R}$, such 
that % $B_0(\zeta)\geq|M_{\sigma}(\zeta,\zeta)|$ for all sufficiently 
%small $\sigma>0$ and 
$B_{\infty}(\zeta)\geq |\tilde{M}_{\sigma}(\zeta,\zeta)|$ 
for all sufficiently large $\sigma$. 
Now
\begin{align*}
M_{\sigma}(\zeta,\zeta)%\lim_{\epsilon\to 0}
&\leq\frac{1}{4\pi^{5/2}}\int_{\gamma}\int_{\gamma}
\left|\frac{e^{-\frac{1}{2}(\sigma v-\eta)^2
+\frac{i}{3}(v^3+u^3)+i\xi (v+u) -\frac{1}{2}(\sigma u+\eta_1)^2}}{i(u+v)}\right|\ud u \ud v\nonumber\\
&=\frac{e^{\delta^2\sigma^2+\frac{2}{3}\delta^3-\eta^2-2\delta \xi}}{4\pi^{5/2}}\iint_{\mathbb{R}^2}
\frac{e^{-\frac{1}{2}(\sigma^2+2\delta)(t^2+s^2)+\sigma\eta(t-s)}}
{|s+t+2\delta i|}\ud t\ud s\nonumber\\
%&\leq&\frac{e^{\delta^2\sigma^2+\frac{2}{3}\delta^3-\eta^2\left(1-\frac{\sigma^2}{\sigma^2+2\delta}\right)-2\delta \xi}}{4\pi^{5/2}}\nonumber\\
%&&\times \iint_{\mathbb{R}^2}\frac{\exp\left\{-\frac{1}{2}\left(t'-\frac{\sigma\eta}{\sqrt{\sigma^2+2\delta}}\right)^2-\frac{1}{2}\left(s'+\frac{\sigma\eta}{\sqrt{\sigma^2+2\delta}}\right)^2 \right\}}{(\sigma^2+2\delta)2\delta}\ud t'\ud s'\nonumber\\
&\leq\frac{\exp\left\{\delta^2\sigma^2+\frac{2}{3}\delta^3-\frac{2\delta\eta^2}{\sigma^2+2\delta}-2\delta \xi\right\}}{4\pi^{3/2}\delta(\sigma^2+2\delta)}, 
\end{align*}
%Choose, for instance, $\delta=1$, and it follows that % for any given $\sigma>0$, 
%\begin{equation}\int_{(t,\infty)\times \mathbb{R}}M_{\sigma}(\zeta,\zeta)\ud\xi \ud\eta \leq \frac{e^{\sigma^2}}{8\sqrt{2}\pi\sqrt{\sigma^2+2}}e^{-2t}<\infty, \end{equation}
%and that 
%for any  $\sigma\leq 1$
%\begin{equation}
%M_{\sigma}(\zeta,\zeta)\leq \frac{e}{8\pi^{3/2}} e^{-\frac{2\eta^2}{3}-2\xi}
%=:B_0(\zeta),
%\end{equation}
%where $\int_{(\xi_0,\infty)\times \mathbb{R}}B_0(\zeta)\ud\xi d \eta<Ce^{-2\xi_0}<\infty$.
%The last estimate requires a more subtle choice of $\delta$, 
%depending on $\sigma$, namely $\delta=1/{2a_{\sigma}}$. 
%By Equation (\ref{mestimate}),
so choosing $\delta=1/{2a_{\sigma}}$ gives
\begin{align}
\tilde{M}_\sigma(\zeta,\zeta)
&\leq\frac{a_{\sigma}b_{\sigma}e^{\delta^2\sigma^2+\frac{2}{3}\delta^3}}
{4\pi^{3/2}\delta(\sigma^2+2\delta)} 
\exp\left\{-\frac{2\delta b_{\sigma}^2\eta^2}{\sigma^2+2\delta}-2\delta (c_{\sigma}+a_{\sigma}\xi)\right\}\nonumber\\
&=\frac{\exp\left\{\frac{\sqrt{6}(\log\sigma)^{3/2}}
{\sigma^3}\right\}}{\sqrt{\pi}\left(1+\frac{\sqrt{6\log\sigma}}
{\sigma^3}\right)}\exp\left\{-\frac{\eta^2}
{\left(1+\frac{\sqrt{6\log\sigma}}{\sigma^3}\right)}-\xi\right\}\nonumber\\
&\leq\frac{e^{1-\frac{1}{2}\eta^2-\xi}}{\sqrt{\pi}}\nonumber\\
&=:B_{\infty}(\xi,\eta)
\end{align}
for all sufficiently large $\sigma$, which concludes the proof.% so we may choose $B_{\infty}(\xi,\eta)=\pi^{-1/2}e^{1-\frac{1}{2}\eta^2-\xi}$.
\end{proof}

\section{Proof of Theorem \ref{main}}\label{mainproof}
The proof is organized as follows: In Section \ref{hermitelemma} 
an integral representation formula for the sum of products of 
Hermite polynomials appearing in (\ref{defkernel}) is derived. 
The resulting representation of the correlation kernel and a general 
discussion of the saddle point arguments used to 
calculate its asymptotics are given in 
\ref{intrepcorrkernel}. Sections \ref{proofgumbel} and 
\ref{proofairy} contain the proofs of part (i) and (ii) of Theorem \ref{main}
respectively.
 The two cases of the proof are  
preceded by Sections \ref{estimatesgumbel} and 
\ref{estimatesairy} respectively, with a series of lemmas  
estimating the integrals appearing in the correlation kernel for the two cases.

In the asymptotic estimates we use the following notation:
$C$ and  $C_i$, $i=1,2,\ldots$  are generic positive constants, 
and the occurrence of the same symbol in different chains 
of inequalities need 
not denote the same number. If $y_n>0$  and  $x_n/y_n \to 0$ as $n\to \infty$, 
we may write $x_n=o(y_n)$ or, provided $x_n\geq0$,  $x_n\ll y_n$. 
The relation $x_n=\mathcal{O}(y_n)$ means that there is a positive 
constant $C$ such that $|x_n|\leq C y_n$ for every $n$. If there is a 
positive constant $C$ such that $x_n/C<y_n<C x_n$ for every $n$, 
we write $x_n\asymp y_n$.
In the proofs of lemmas and theorems where $\zeta=(\zeta_1,\zeta_2)$ 
is explicitly stated to be fixed, 
constants may depend on $\zeta$.

\subsection{Integral formula for the Hermite polynomials}  \label{hermitelemma}
The proof of Theorem \ref{main} essentially reduces to
calculating asymptotics of the (appropriately rescaled)
correlation kernel $K_n^{\tau_n}$. The key to this 
analysis is to find a
suitable representation of the Hermite polynomials;
because of the scale factor $\tau^k$ appearing in (\ref{defkernel}) we cannot
use the Christoffel-Darboux formula to simplify the sum,
which is the standard technique. The formula derived in Lemma \ref{hermite}
differs from the one used in \cite{Verbaarschot} for the special 
case $\sigma_n=cn^{-1/3}$, and it enables us to carry out a more 
complete analysis.
\begin{lemma}\label{hermite} 
Let
$r_1, r_2$
and 
$\tau$ 
be positive real numbers such that 
$r_1<\tau r_2$ 
and define the contours 
\begin{equation}
[-\pi,\pi]\ni t \mapsto \gamma_{r_1}(t)=r_1e^{it} \in \mathbb{C}
\end{equation}%\{w:|w|=r_1\}$ 
and 
\begin{equation}
 \mathbb{R}\ni t \mapsto \Gamma_{r_2}(t)=r_2+i t \in \mathbb{C}
\end{equation}
%$\Gamma_{r_2}=\{w:r_2+it, t\in \mathbb{R}\}$ 
in the complex plane. Then, for any positive integer $n$ and all 
complex numbers $z_1$ and $z_2$, the identity
\begin{equation}\label{hermiteformula}
\sum_{k=0}^{n-1}\tau^k h_k(z_1)h_k(z_2)
=\frac{\tau^ne^{z_2^2}}{2\pi^2}\oint_{\gamma_{r_1}}\int_{\Gamma_{r_2}}
\left(\frac{w_2}{w_1}\right)^n\frac{e^{w_2^2-2z_2w_2+2z_1w_1-w_1^2}}
{w_1-\tau w_2}\ud w_2 \ud w_1
\end{equation} 
holds.
\end{lemma}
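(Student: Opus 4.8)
The plan is to verify the identity by working from the right-hand side: expand the Cauchy kernel $1/(w_1-\tau w_2)$ in a geometric series, integrate in $w_1$ by Cauchy's coefficient formula to produce a \emph{finite} sum, and then evaluate the remaining $w_2$-integral as a Gaussian integral that represents a Hermite polynomial. Throughout, write $h_k=(2^kk!\sqrt\pi)^{-1/2}H_k$ in terms of the Hermite polynomials $H_k$ with generating function $e^{2zt-t^2}=\sum_{k\ge0}H_k(z)t^k/k!$, and record the two elementary facts the argument rests on. First, for every $r>0$ and $k\ge0$, $\oint_{\gamma_r}w^{-k-1}e^{2zw-w^2}\,\ud w=2\pi i\,H_k(z)/k!$ by Cauchy's coefficient formula, while the same integral vanishes when $k<0$ since then the integrand is entire and the contour is closed. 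Second, $H_m(z)=2^m\pi^{-1/2}\int_{-\infty}^\infty(z+is)^me^{-s^2}\,\ud s$, which follows by expanding $e^{2t(z+is)}$ in powers of $t$ and doing the Gaussian integral.

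On the two contours one has $|w_1|=r_1$ and $|\tau w_2|\ge\tau r_2>r_1$, so $|w_1/(\tau w_2)|\le r_1/(\tau r_2)<1$ uniformly; this is precisely where the hypothesis $r_1<\tau r_2$ enters. Consequently $1/(w_1-\tau w_2)=-\sum_{j\ge0}w_1^j(\tau w_2)^{-j-1}$ converges uniformly along $\gamma_{r_1}$ for each fixed $w_2$, and since $|e^{w_2^2}|=e^{r_2^2-t^2}$ (with $w_2=r_2+it$) decays like a Gaussian along $\Gamma_{r_2}$, it dominates the polynomial factors $w_2^{\,n-j-1}$, so the double integral is absolutely convergent. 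Fubini's theorem then permits integrating in $w_1$ first and interchanging the $j$-sum with the $w_1$-integral. Reindexing by $m=n-1-j$, the generic term produces $w_2^{\,m}\oint_{\gamma_{r_1}}w_1^{-m-1}e^{2z_1w_1-w_1^2}\,\ud w_1$, which by the first fact above vanishes unless $0\le m\le n-1$; what survives, inside the $w_2$-integral and up to the overall prefactor $\tau^ne^{z_2^2}/(2\pi^2)$, is the finite sum $-2\pi i\sum_{m=0}^{n-1}\tau^{\,m-n}(H_m(z_1)/m!)\,w_2^{\,m}e^{w_2^2-2z_2w_2}$.

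It remains to compute $\int_{\Gamma_{r_2}}w_2^{\,m}e^{w_2^2-2z_2w_2}\,\ud w_2$. Completing the square gives $w_2^2-2z_2w_2=-z_2^2-(t-i(r_2-z_2))^2$ on $w_2=r_2+it$; the substitution $s=t-i(r_2-z_2)$ (so that $w_2=z_2+is$) followed by shifting the $s$-contour back to the real axis — legitimate since $(z_2+is)^me^{-s^2}$ is entire with Gaussian decay — turns the integral into $ie^{-z_2^2}\int_{-\infty}^\infty(z_2+is)^me^{-s^2}\,\ud s=ie^{-z_2^2}\sqrt\pi\,2^{-m}H_m(z_2)$ by the second fact. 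Assembling the pieces: the factors $e^{\pm z_2^2}$ cancel, $(-2\pi i)(i)=2\pi$ combines with $\tau^ne^{z_2^2}/(2\pi^2)$ and $\sqrt\pi$ to leave $\tau^n/\sqrt\pi$, the powers $\tau^n\cdot\tau^{m-n}$ collapse to $\tau^m$, and one obtains $\pi^{-1/2}\sum_{m=0}^{n-1}\tau^mH_m(z_1)H_m(z_2)/(2^mm!)=\sum_{m=0}^{n-1}\tau^mh_m(z_1)h_m(z_2)$, the left-hand side. The only genuinely delicate part is the justification of the geometric expansion, of Fubini, and of the term-by-term $w_1$-integration, all of which hinge on the condition $r_1<\tau r_2$ together with the Gaussian decay of $e^{w_2^2}$ along the vertical contour; the remainder is bookkeeping of constants and powers of $\tau$.
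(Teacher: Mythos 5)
Your argument is correct and uses the same two contour representations of the Hermite polynomials (Cauchy's coefficient formula on the circle $\gamma_{r_1}$, and the vertical-line Gaussian integral on $\Gamma_{r_2}$) as the paper, just run in the reverse direction: the paper starts from the finite sum, inserts both representations, sums the finite geometric series $\sum_{k=0}^{n-1}(\tau w_2/w_1)^k$, and drops the resulting ``$-1$'' boundary term by Cauchy's theorem, whereas you expand $1/(w_1-\tau w_2)$ as an infinite geometric series on the right-hand side and observe that the $w_1$-integral kills all negative-power terms, so the series truncates automatically. These are dual mechanisms for the same truncation, both hinging on $r_1<\tau r_2$, so the content is essentially identical; your version is perhaps slightly cleaner as a verification since the finiteness comes out explicitly from the residue computation rather than from a separate appeal to analyticity.
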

\begin{proof}
Recall that the orthonormal Hermite polynomials can be written 
\begin{equation}
h_k(z)=\pi^{1/4}\sqrt{2^k k!} H_k(z),
\end{equation}
 where
\begin{equation}\label{rodrigues} 
H_k(z)=(-1)^ke^{z^2}\frac{\ud^k(e^{-z^2})}{\ud z^k},
\end{equation}
and that the renormalized polynomials $H_k$ satisfy the generating 
function relation
\begin{equation}\label{generatingfunction}
e^{2wz-w^2}=\sum_{n=1}^{\infty}\frac{H_n(z)w^n}{n!}
\end{equation}
for all complex numbers $z$ and $w$.
Note that for any choice of real $r_2$,
\begin{equation}\label{expref}
e^{-z^2}%&=&\frac{e^{-z^2}}{\sqrt{\pi}}
%\int_{\mathbb{R}}e^{-u^2}\ud u\nonumber\\
%&=&\frac{e^{-z^2}}{\sqrt{\pi}}\int_{\mathbb{R}}e^{-(u+iz)^2}\ud u\nonumber\\
%&=&\frac{1}{i\sqrt{\pi}}\int_{i\mathbb{R}}e^{w^2-2zw}\ud w\nonumber\\
=\frac{1}{i\sqrt{\pi}}\int_{\Gamma_{r_2}}e^{w^2-2zw}\ud w.
\end{equation}
%by Cauchy's formula, since the integrand vanishes at infinity.
Using Equations (\ref{rodrigues}) and (\ref{expref}) 
gives a representation
\begin{equation}\label{hermiteintegral1}
H_k(z)=%(-1)^ke^{z^2}\frac{\ud^k}{dz^k}\left(\frac{1}{i\sqrt{\pi}}
%\int_{\Gamma_{r_2}}e^{w^2-2zw}\ud w\right)\nonumber\\
%&=&\frac{(-1)^ke^{z^2}}{i\sqrt{\pi}}\int_{\Gamma_{r_2}}
%\frac{\ud^k}{dz^k}\left(e^{w^2-2zw}\right)\ud w\nonumber\\
%&=&
\frac{2^ke^{z^2}}{i\sqrt{\pi}}\int_{\Gamma_{r_2}}w^ke^{w^2-2zw}\ud w.
\end{equation}
On the other hand, Equation (\ref{generatingfunction})
 and the residue theorem yield, for any choice of $r_1>0$,
\begin{equation}\label{hermiteintegral2}
\frac{1}{2\pi i}\oint_{\gamma_{r_1}}\frac{e^{2wz-w^2}}{w^{k+1}}\ud w
%=\frac{1}{2\pi i}\oint_{\gamma_{r_1}}
%\left(\sum_{n=1}^{\infty}\frac{H_n(z)w^{n-k-1}}{n!}\right)\ud w
=\frac{H_k(z)}{k!}.
\end{equation}
Combining the representations (\ref{hermiteintegral1}) 
and (\ref{hermiteintegral2}) gives, for any $\tau>0$
such that $|\tau r_2|>r_1$,
\begin{align*}
\sum_{k=0}^{n-1}\tau^k h_k(z_1)&h_k(z_2)
=\sum_{k=0}^{n-1}\frac{H_k(z_1)H_k(z_2)}{\sqrt{\pi}k!(2/\tau)^k}\nonumber\\
=&-\sum_{k=0}^{n-1}\frac{\tau^k e^{z_2^2}}{2\pi^2}
\oint_{\gamma_{r_1}}\int_{\Gamma_{r_2}}\frac{w_2^k}{w_1^{k+1}}
e^{2w_1z-w_1^2+w^2-2zw}\ud w_2 \ud w_1\nonumber\\
=&-\frac{e^{z_2^2}}{2\pi^2}\oint_{\gamma_{r_1}}\int_{\Gamma_{r_2}}
\sum_{k=0}^{n-1}\left(\frac{\tau w_2}{w_1}\right)^k
\frac{e^{2w_1z-w_1^2+w^2-2zw}}{w_1}\ud w_2 \ud w_1\nonumber\\
=&-\frac{e^{z_2^2}}{2\pi^2}\oint_{\gamma_{r_1}}\int_{\Gamma_{r_2}}
\frac{\left(\frac{\tau w_2}{w_1}\right)^n-1}
{\tau w_2 -w_1}e^{2w_1z-w_1^2+w^2-2zw}\ud w_2 \ud w_1\nonumber\\
=&\frac{\tau^ne^{z_2^2}}{2\pi^2}\oint_{\gamma_{r_1}}\int_{\Gamma_{r_2}}
\left(\frac{w_2}{w_1}\right)^n\frac{e^{w_2^2-2z_2w_2+2z_1w_1-w_1^2}}
{w_1-\tau w_2}\ud w_2 \ud w_1\nonumber\\
%=&-\frac{e^{z_2^2}}{2\pi^2}\oint_{\gamma_{r_1}}\int_{\Gamma_{r_2}}
%\frac{\left(\frac{\tau w_2}{w_1}\right)^n}
%{\tau w_2 -w_1}e^{2w_1z-w_1^2+w^2-2zw}\ud w_2 \ud w_1,\nonumber\\
\end{align*}
as claimed, where the last equality follows since 
\begin{equation*}
\oint_{\gamma_{r_1}}\frac{e^{2w_1z-w_1^2+w^2-2zw}}{\tau w_2 -w_1}\ud w_1=0,
\end{equation*}
by Cauchy's theorem.
% so Equation (\ref{hermiterep1}) can be written
%\begin{eqnarray}\sum_{k=0}^{n-1}\tau^k\frac{H_k(z_1)H_k(z_2)}{\sqrt{\pi}k!2^k}&=&-\frac{e^{-z_2^2}}{2\pi^2}\oint_{\gamma_{r_1}}\int_{\Gamma_{r_2}}\frac{\left(\frac{\tau w_2}{w_1}\right)^n}{\tau w_2 -w_1}e^{2w_1z-w_1^2+w^2-2zw}\ud w_2 \ud w_1\nonumber\\
%&=&\frac{\tau^n e^{-z_2^2}}{2\pi^2}\oint_{\gamma_{r_1}}\int_{\Gamma_{r_2}}\left(\frac{w_2}{w_1}\right)^n\frac{e^{2w_1z-w_1^2+w^2-2zw}}{w_1-\tau w_2}\ud w_2 \ud w_1.
%\end{eqnarray}
\end{proof}
\subsection{The saddle point argument}
\label{intrepcorrkernel}
Identifying  $\zeta_j=\xi_j+i\eta_j\in \mathbb{C}$ with 
$(\xi_j,\eta_j)\in \mathbb{R}^2$ and using Lemma \ref{hermite}, 
Equation (\ref{defkernel}) can be written
\begin{multline}\label{integralkernel} 
K_n^{\tau}(\zeta_1,\zeta_2)
=\frac{n\tau^n}
{2\pi^{5/2}\sqrt{(1-\tau^2)}}
\exp\left\{-\frac{n}{2}\left(-\frac{\overline{\zeta}_2^2}{\tau}+\frac{\xi_1^2+\xi_2^2}{1+\tau}
+\frac{\eta_1^2+\eta_2^2}{1-\tau}\right)\right\} \\
\times \oint_{\gamma_{r_1}}\int_{\Gamma_{r_2}}
\frac{e^{n\log w_2+w_2^2-
\sqrt{\frac{2n}{\tau}}\overline{\zeta}_2 w_2
-\left(n\log w_1+w_1^2-\sqrt{\frac{2n}{\tau}}\zeta_1w_1\right)}}
{w_1-\tau w_2}\ud w_2 \ud w_1,
\end{multline}
for appropriate choices of $r_1$ and $r_2$, whenever $0<\tau<1$. 
(For the Ginibre ensemble, $\tau=0$, the correlation kernel has 
the form
\begin{equation*}
K_n^{0}\left(\zeta_1,\zeta_2\right)\\
=\frac{n}{\pi}
\sum_{k=0}^{n-1}\frac{\left(\zeta_1 \overline{\zeta}_2\right)^k}{k!}
e^{-\frac{n}{2}\left(|\zeta_1|^2+|\zeta_2|^2\right)},
\end{equation*}
and a representation

\begin{equation*}
\sum_{k=0}^{n-1}\frac{\left(\zeta_1 \overline{\zeta}_2\right)^k}{k!}
=\frac{1}{2\pi i}\int_{\Gamma_{r}}
\frac{e^{\zeta_1\overline{\zeta}_2 w}}{w^n(1-w)}\ud w
\end{equation*}
 analogous to (\ref{hermiteformula}) gives a (simpler) 
saddle point argument, 
paralleling the $\tau>0$ case. The details will be skipped.)

The correlation kernel of the rescaled point process 
$\tilde{Z}_n^{\tau_n}$ is given
by 
\begin{multline} 
\tilde{K}_n^{\tau_n}(\zeta_1,\zeta_2)\\
=a_n b_n \frac{\tau_n}{2n}K_n^{\tau_n}\left(\sqrt{\frac{\tau_n}{2n}}(c_n+a_n\xi_1+ib_n\eta_1),\sqrt{\frac{\tau_n}{2n}}(c_n+a_n\xi_2+ib_n\eta_2)\right),
\end{multline}
 where we will see that the scaling parameters
\begin{equation}\label{defparameters}
(a_n,b_n,c_n):
=\sqrt{\frac{2n}{\tau_n}}(\tilde{a}_n,\tilde{b}_n,\tilde{c}_n),
\end{equation} 
should  be chosen as in the statement of the theorem.

Define the analytic function
\begin{equation}f_n(w)=n \log w+w^2-c_nw, 
\end{equation}where we choose the principal branch of the logarithm.
We expect $\tilde{c}_n$ to be close to
$(1+\tau_n)$, the rightmost edge of the spectrum on 
the global scale, so the main contribution to the exponent 
in the integral in (\ref{integralkernel}) should be
  $f_n(w_2)-f_n(w_1)$.
 The idea of the proof is now to calculate the large 
$n$ asymptotics 
of $\tilde{K}_n^{\tau_n}$ by a saddle point 
argument for $f_n$.
Define the shift parameter
\begin{equation}
\delta_n:=c_n-\sqrt{\frac{2n}{\tau_n}}(1+\tau_n).
\end{equation}
Provided $0\leq x_n=o(1)$, where 
\begin{equation}
x_n=\frac{\delta_n}{\sqrt{n}(1-\tau_n)^2},
\end{equation}
%The saddle points of $f_n$, i. e. the roots of the equation 
%\begin{equation}f'_n(w)=\frac{n}{w}+2w-c_n=0,
%\end{equation}
which will turn out always 
to be the case, $f_n$ has two distinct, real, positive saddle points, $w_+>w_-,$ 
solving the equation
\begin{equation}f_n'(w)=\frac{n}{w}+2w-c_n=0.
\end{equation}
Explicitly,
\begin{equation}
w_+=
\frac{c_n}{4}\left(1+\sqrt{1-\frac{8n}{c_n^2}}\right)
=\sqrt{\frac{n}{2\tau_n}}+\frac{\delta_n}{2(1-\tau_n)}
+\mathcal{O}\left(\frac{\delta_n^2}{\sqrt{n}(1-\tau_n)^3}\right), 
%-\left(\frac{\tau_n}{2}\right)^{3/2}\frac{\delta_n^2}
%{\sqrt{n}(1-\tau_n)^3}+\mathcal{O}\left(\frac{\delta_n^3}{n(1-\tau_n)^5}\right) 
%\frac{c_n}{4}\left(1+\sqrt{1-\frac{8n}{c_n^2}}\right)=\sqrt{\frac{n}{2\tau_n}}\left(1+\frac{\sqrt{\tau_n}\delta_n}{\sqrt{2n}(1-\tau_n)}+\mathcal{O}\left(\frac{\delta_n^2}{n(1-\tau_n)^3}\right)\right)    
\end{equation} 
and
\begin{equation}
w_-=\frac{c_n}{4}\left(1-\sqrt{1-\frac{8n}{c_n^2}}\right)
%&=&\frac{c_n}{2}-w_+\nonumber\\
=\sqrt{\frac{\tau_n n}{2}}-\frac{\tau_n\delta_n}{2(1-\tau_n)}
+\mathcal{O}\left(\frac{\delta_n^2}{\sqrt{n}(1-\tau_n)^3}\right),
%+\left(\frac{\tau_n}{2}\right)^{3/2}\frac{\delta_n^2}{\sqrt{n}(1-\tau_n)^3}
%+\mathcal{O}\left(\frac{\delta_n^3}{n(1-\tau_n)^5}\right)
%w_-=\frac{c_n}{4}\left(1-\sqrt{1-\frac{8n}{c_n^2}}\right)=\sqrt{\frac{n\tau_n}{2}}\left(1-\frac{\sqrt{\tau_n}\delta_n}{\sqrt{2n}(1-\tau_n)}+\mathcal{O}\left(\frac{\delta_n^2}{n(1-\tau_n)^3}\right)\right),    
\end{equation}
 where the asymptotics are given by an expansion 
of the square root,
\begin{multline}
\label{taylorroot}
\sqrt{1-\frac{8n}{c_n^2}}
=\frac{(1-\tau_n)}{(1+\tau_n)}\left(1+\frac{(2\tau_n)^{3/2}}{(1+\tau_n)}x_n
-\frac{\tau_n^2(3-2\tau_n+3\tau_n^2)}{(1+\tau_n)^2}x_n^2
+\mathcal{O}\left( x_n^3\right)\right).
\end{multline}
%\\
%\begin{multline}=\frac{1-\tau_n}{1+\tau_n}+\frac{\delta_n}{\sqrt{n}}\sqrt{\frac{\tau_n}{2}}\left(\frac{1}{1-\tau_n}-\frac{1-\tau_n}{(1+\tau_n)^2}\right)+\frac{\tau_n\delta_n^2}{2n}\left(\frac{-2(1-\tau_n)^2-2(1+\tau_n)^2}{4(1+\tau_n)(1-\tau_n)^3}+\frac{1-\tau_n}{(1+\tau_n)^3}\right)+\mathcal{O}\left(\frac{\delta_n^3}{(1-\tau_n)^5n^{3/2}}\right)=
%\\ \frac{1-\tau_n}{1+\tau_n}+\delta_n\sqrt{\frac{\tau_n}{2n}}\left(\frac{4\tau_n}{(1-\tau_n)(1+\tau_n)^2}\right)+\frac{\tau_n\delta_n^2}{2n}\left(\frac{-6\tau_n+4\tau_n^2-6\tau_n^3}{(1-\tau_n)^3(1+\tau_n)^3}\right)+\mathcal{O}\left(\frac{\delta_n^3}{(1-\tau_n)^5n^{3/2}}\right)=\\
%\frac{1-\tau_n}{1+\tau_n}\left(1+\frac{\delta_n}{\sqrt{n}(1-\tau_n)^2}\frac{4\tau_n^{3/2}}{\sqrt{2}(1+\tau_n)}+\left(\frac{\delta_n}{\sqrt{n}(1-\tau_n)^2}\right)^2\left(\frac{\tau_n^2(-3+2\tau_n-3\tau_n^3)}{(1+\tau_n)^2}\right)+\mathcal{O}\left(\left(\frac{\delta_n}{\sqrt{n}(1-\tau_n)^2}\right)^3\right)\right)
%\end{multline}
To analyze the behaviour of $f_n$ close to the saddle points 
we observe that
\begin{equation}\label{fbisplus} 
f_n''(w_+)=2-\frac{n}{w_+^2}=2(1-\tau_n)+\mathcal{O}\left(\frac{\delta_n}{\sqrt{n}(1-\tau_n)}\right)>0,
%+\frac{(2\tau_n)^{3/2}\delta_n}{\sqrt{n}(1-\tau_n)}
%+\mathcal{O}\left(\frac{\delta_n^2}{n(1-\tau_n)^3}\right)>0 
\end{equation} 
\begin{equation}\label{fbisminus}  
f_n''(w_-)=%2-\frac{n}{w_-^2}=
-\frac{2}{\tau_n}(1-\tau_n)
+\mathcal{O}\left(\frac{\delta_n}{\sqrt{n}(1-\tau_n)}\right)<0,
\end{equation} 
and
\begin{equation*} 
f_n^{(k)}(w)=(-1)^{k-1}(k-1)!nw^{-k}, \textrm{ for }k\geq 3.
\end{equation*}
%\begin{equation} 
%f_n^{(k)}(w_+)= (-1)^{k-1}(k-1)!(2\tau_n)^{k/2} n^{1-k/2}
%\left(1+\mathcal{O}\left(\frac{\delta_n}{\sqrt{n}(1-\tau_n)}\right)\right).
%\end{equation}
For sequences  $(\tau_n)_{n=1}^{\infty}$  such that $|f_n''(w_-)|$ and
$|f_n''(w_+)|$ become very small, we cannot ignore 
the third order terms in the Taylor expansions of $f_n$ at the saddle points; 
more specifically this happens whenever
\begin{equation*} 
\lim_{n\to \infty}\left|\frac{f_n'''(w_+)^2}{f_n''(w_+)^3}\right|%=\lim_{n\to 
%\infty}\frac{4\tau_n^3}{\sigma_n^6}
>0,
\end{equation*}
and similarly at $w_-$, that is, whenever $\sigma_n=\mathcal{O}(1)$.
This results in completely different asymptotic 
behaviour of the integral 
in (\ref{integralkernel}) depending on whether or not 
$\sigma_n=\mathcal{O}(1)$, and the choices of contours and 
parameters in the general parameterization of the integral, 
given in Lemma \ref{parameterization}, will differ in the two cases.

In principle, in view of (\ref{fbisplus}) and (\ref{fbisminus}), 
we would like to choose $r_1=w_-$ and $r_2=w_+$ in 
(\ref{integralkernel}) in order to pick up a 
Gaussian integral at each of the saddle points. However, 
since %$\delta_n\gg a_n>0$, 
\begin{equation}\label{denominator}
\tau_n w_+ -w_-
=\frac{\tau_n\delta_n}{(1-\tau_n)}
\left(1+\mathcal{O}\left(x_n\right)\right), 
\end{equation} 
%where $x_n=\delta_n n^{-1/2}(1-\tau_n)^{-2}$ as above
the integral in (\ref{integralkernel}) will not converge 
for this choice of $r_1$ and $r_2$
unless  $\delta_n>0$, so if $\delta_n=0$ a 
slight modification will be necessary.

For $r_1$ 
and $r_2$ (depending on $n$) to be specified, parameterize the contours 
of integration 
\begin{equation}
\left\{\begin{array}{ll}\gamma_{r_1}:s\mapsto r_1e^{i\theta_ns}, 
|s|\leq\pi/\theta\\
\Gamma_{r_2}:t\mapsto r_2+i\alpha_nt, t\in \mathbb{R},
\end{array}\right.
\end{equation}
where $\theta_n$ and $\alpha_n$ are positive parameters.
 Moving out the main contributing factor 
from the integral, we get the following representation of the 
correlation kernel:
\begin{lemma}\label{parameterization}
Let  $r_1$ and $r_2$ be any positive real numbers  such that 
$\tau_n r_2-r_1>0$.
For $t,v\in \mathbb{R}$, define
\begin{align}
\Xi_{r_2}^{\zeta_2}(t):=&f_n(r_2+i\alpha_n t)-f_n(r_2)
-i\alpha_n t(a_n \xi_2-ib_n\eta_2)\nonumber\\
=&n\log\left(1+\frac{i\alpha_n t}{r_2}\right)-\alpha_n^2t^2
-i\alpha_n t\left(c_n+a_n\xi_2-ib_n\eta_2-2r_2\right)
\end{align}
and 
\begin{align}\label{phipsi}
\Phi_{r_1}^{\zeta_1}(v)+i&\Psi_{r_1}^{\zeta_1}(v):=-f_n(r_1e^{iv})+f_n(r_1)
+r_1(e^{iv}-1)(a_n \xi_1+ib_n\eta_1)\nonumber\\
=&r_1^2(1-2\cos 2v)+r_1c_n'(\cos v -1)-r_1b_n\eta_1 \sin v\nonumber\\
&+i\left((-n+1)v-r_1^2\sin 2v+r_1c_n'\sin v-r_1b_n\eta_1(\cos v-1)\right)
\end{align}
where $c_n'=c_n+a_n\xi_1$ and $\Phi_{r_1}^{\zeta_1}$ and 
$\Psi_{r_1}^{\zeta_1}$ are the real and imaginary parts of the 
right hand side of (\ref{phipsi}) respectively.

The correlation kernel $\tilde{K}_n^{\tau_n}$ 
of the rescaled determinantal point process $\tilde{Z}_n^{\tau_n}$ can 
then be expressed
\begin{align}\label{rescaledkernel}
\tilde{K}_n^{\tau_n}(\zeta_1,\zeta_2)
=&\frac{-r_1\theta_n\alpha_n a_nb_n\tau_n^{n+1}}{4\pi^{5/2}
\sqrt{(1-\tau_n^2)}}\exp \left\{\frac{(c_n+a_n\xi_2)^2
-\tau_n(c_n+a_n\xi_1)^2}{4(1+\tau_n)}\right.\nonumber\\
&\left. -\frac{b_n^2(\tau_n\eta_1^2+\eta_2^2)}{4(1-\tau_n)}
 -\frac{ib_n\eta_2(c_n+a_n\xi_2)}{2}\right\}\nonumber\\
&\times \exp\left\{f_n(r_2)-f_n(r_1)+a_n(r_1\xi_1 -r_2\xi_2)
+ib_n(r_1\eta_1+r_2\eta_2)\right\}\nonumber\\
&\times\iint_{\{|s|<\pi/\theta\}\times 
\mathbb{R}}F_{r_1,r_2}^{\zeta_1,\zeta_2}(s,t) \ud t \ud s,
\end{align}
where 
\begin{equation}\label{integranddef}
F_{r_1,r_2}^{\zeta_1,\zeta_2}(s,t)
:=\frac{\exp\left\{\Phi_{r_1}^{\zeta_1}(\theta_n s)
+i\Psi_{r_1}^{\zeta_1}(\theta_n s)+\Xi_{r_2}^{\zeta_2}(t)\right\}}{r_1e^{i\theta_n s}-\tau_n (r_2+i\alpha_n t)}.
\end{equation}
\end{lemma}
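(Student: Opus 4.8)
The plan is to start from the integral representation (\ref{integralkernel}) of $K_n^{\tau}$ and apply the rescaling $\zeta_j \mapsto \sqrt{\tau_n/(2n)}(c_n+a_n\xi_j+ib_n\eta_j)$ together with the parameter substitution $(a_n,b_n,c_n)=\sqrt{2n/\tau_n}(\tilde a_n,\tilde b_n,\tilde c_n)$ from (\ref{defparameters}), and then reorganize the resulting exponent so that the $f_n(w_2)-f_n(w_1)$ part is isolated and the remaining factors, which do not depend on the integration variables, are pulled outside the double integral. First I would substitute the rescaled arguments into the prefactor $\exp\{-\tfrac{n}{2}(-\overline\zeta_2^2/\tau+\cdots)\}$ of (\ref{integralkernel}); since $\overline{\zeta}_2$ becomes $\sqrt{\tau_n/(2n)}(c_n+a_n\xi_2-ib_n\eta_2)$, squaring it and collecting real and imaginary parts produces exactly the first two exponential lines of (\ref{rescaledkernel}), i.e.\ the terms $\tfrac{(c_n+a_n\xi_2)^2-\tau_n(c_n+a_n\xi_1)^2}{4(1+\tau_n)}$, $-\tfrac{b_n^2(\tau_n\eta_1^2+\eta_2^2)}{4(1-\tau_n)}$ and $-\tfrac{ib_n\eta_2(c_n+a_n\xi_2)}{2}$. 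This is a bookkeeping computation: one must be careful that the $\xi_1^2/(1+\tau)$ and $\eta_1^2/(1-\tau)$ contributions pair correctly with the expansion of the $-\overline\zeta_2^2/\tau$ term, but it is purely algebraic.

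Next I would handle the integrand. In (\ref{integralkernel}) the exponent in the double integral is $n\log w_2 + w_2^2 - \sqrt{2n/\tau}\,\overline\zeta_2 w_2 - (n\log w_1+w_1^2-\sqrt{2n/\tau}\,\zeta_1 w_1)$; substituting the rescaled $\zeta_1,\overline\zeta_2$ turns $\sqrt{2n/\tau}\,\overline\zeta_2 = c_n+a_n\xi_2-ib_n\eta_2$ (and similarly for $\zeta_1$), so the exponent becomes $f_n(w_2) - i(\cdots) - (f_n(w_1) - i(\cdots))$ with $f_n(w)=n\log w+w^2-c_nw$ and linear-in-$w$ correction terms carrying $a_n\xi_j$ and $b_n\eta_j$. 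Then I parameterize $w_1 = r_1 e^{i\theta_n s}$ on $\gamma_{r_1}$ and $w_2 = r_2 + i\alpha_n t$ on $\Gamma_{r_2}$, picking up Jacobian factors $i r_1\theta_n e^{i\theta_n s}$ and $i\alpha_n$; the factor $r_1\theta_n\alpha_n$ and one power of $i$ (absorbed, together with the overall sign, into the $-1$ in front) appear in the prefactor of (\ref{rescaledkernel}), and the remaining $e^{i\theta_n s}$ stays in the denominator $r_1 e^{i\theta_n s}-\tau_n(r_2+i\alpha_n t)$. Pulling out $\exp\{f_n(r_2)-f_n(r_1)\}$ and the linear terms $\exp\{a_n(r_1\xi_1-r_2\xi_2)+ib_n(r_1\eta_1+r_2\eta_2)\}$ evaluated at the base points leaves precisely $\Xi_{r_2}^{\zeta_2}(t)$ in the numerator from the $w_2$-contribution and $\Phi_{r_1}^{\zeta_1}(\theta_n s)+i\Psi_{r_1}^{\zeta_1}(\theta_n s)$ from the $w_1$-contribution, which are the definitions given in the lemma. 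One just needs to verify the definitions of $\Xi$, $\Phi$, $\Psi$ match: for $\Xi$, expand $f_n(r_2+i\alpha_n t)-f_n(r_2)$ using $\log(r_2+i\alpha_n t) = \log r_2 + \log(1+i\alpha_n t/r_2)$ and $(r_2+i\alpha_n t)^2 = r_2^2 + 2i r_2\alpha_n t - \alpha_n^2 t^2$, and collect; for $\Phi+i\Psi$, expand $-f_n(r_1 e^{iv})+f_n(r_1)$ using $\log(r_1 e^{iv}) = \log r_1 + iv$ and $(r_1 e^{iv})^2 = r_1^2 e^{2iv} = r_1^2(\cos 2v + i\sin 2v)$, then separate real and imaginary parts — the $(1-2\cos 2v)$ comes from combining the $w^2$ term with a constant of the expansion, and the $(-n+1)v$ in $\Psi$ from the $n\log$ term (the $+1$ rather than $+n$ is a small point to watch, arising from how the $e^{iv}$ Jacobian factor was moved).

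The convergence hypothesis $\tau_n r_2 - r_1 > 0$ is needed only to ensure the denominator $r_1 e^{i\theta_n s}-\tau_n(r_2+i\alpha_n t)$ does not vanish on the contour of integration (so that the contour $\gamma_{r_1}$ can be taken without crossing the pole at $w_1 = \tau w_2$, consistent with the derivation of Lemma \ref{hermite}) and that the $t$-integral converges, since $\operatorname{Re}\Xi_{r_2}^{\zeta_2}(t) = n\log|1+i\alpha_n t/r_2| - \alpha_n^2 t^2 + (\text{linear in }t)$ is dominated by $-\alpha_n^2 t^2$ for large $|t|$ while the logarithmic term grows only like $n\log|t|$. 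I do not expect a serious obstacle here — the whole lemma is a careful but routine rewriting of (\ref{integralkernel}); the only place demanding real attention is keeping track of all the constant factors (powers of $i$, the sign, $\tau_n^{n+1}$ versus $\tau_n^n$, the $r_1\theta_n\alpha_n$ Jacobian) so that they assemble exactly into the stated prefactor, and making sure the linear-in-$(\xi_j,\eta_j)$ terms are split consistently between the three exponential lines of (\ref{rescaledkernel}) and the functions $\Xi$, $\Phi$, $\Psi$. This is the kind of computation best carried out by writing the full exponent once and then grouping terms, rather than by any clever device.
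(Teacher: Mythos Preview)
Your proposal is correct and is exactly the approach the paper takes: the paper does not give a separate proof of this lemma at all, presenting it simply as the result of ``moving out the main contributing factor from the integral'' in (\ref{integralkernel}) after the change of variables and contour parameterization you describe. Your observation that the $+1$ in the $(-n+1)v$ term of $\Psi$ comes from absorbing the Jacobian factor $e^{i\theta_n s}$ into the exponent is the one non-obvious bookkeeping point, and you have it right.
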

The technical part of the proof of Theorem \ref{main} has now been reduced
to estimating the integral 
\begin{equation}\label{integralF}
G(\zeta_1,\zeta_2):=\iint_{\{|s|<\pi/\theta\}\times 
\mathbb{R}}F_{r_1,r_2}^{\zeta_1,\zeta_2}(s,t) \ud t \ud s.
\end{equation}
In order to estimate the $s$-integral, % along the circle $\gamma_{r_1}$ 
we need to 
establish that the real function 
\begin{equation*}
\Phi_{r_1}^{\zeta}(v)=r_1^2(1-2\cos 2v)+r_1c_n'(\cos v -1)-r_1b_n\eta_1 \sin v
\end{equation*} decreases monotonically 
as one moves away from its global maximum. This is the content of the 
following lemma.

\begin{lemma}\label{trigfunction} 
Let $\zeta \in \mathbb{R}^2$ be fixed. Put $v_{0}=-b_n\eta/({c_n'}-4r_1)$ and choose 
$r_1=w_-$ if $\sigma_n\to \infty$, and $r_1=w_-(1-\epsilon_n n^{-1/3})$ 
otherwise, where $n^{-1/3}\ll \epsilon_n=o(1)$. %is as defined in Lemma \ref{localairy}.
% For $0\leq \epsilon \ll n^{1/3}$, define the functions  \begin{equation}\Phi_{\epsilon}(v)=2p\sin^2 v +q(\cos v -1)-r\sin v\end{equation} and
%\begin{equation}\Psi_{\epsilon}(v)=(-n+1)v-p\sin(2v)+q\sin v-r(\cos v-1)
%\end{equation}
%where $p=(w_--\epsilon\sqrt{\frac{\tau_n}{2}}n^{1/6})^2, q=(w_--\epsilon\sqrt{\frac{\tau_n}{2}}n^{1/6})(c_n+a_n\xi_1), \textrm{ and }r=(w_--\epsilon\sqrt{\frac{\tau_n}{2}}n^{1/6})b_n\eta_1$.
Then for every sufficiently large $n$ there is a $\rho_n$ with
%\begin{equation*}
%\left(v_{0}-\rho_n,v_{0}+\rho_n\right),
%\end{equation*}
%where 
\begin{equation*}
\left\{\begin{array}{ll}\rho_n <C_1\frac{1}{\epsilon_n n^{1/3}} \textrm{ if }\sigma_n=o(1)\\
\rho_n <C_2\frac{b_n^2}{n(1-\tau_n)^3} \textrm{ otherwise,}\end{array}\right.
\end{equation*} 
such that if $v_1\in (-\pi/2,\pi/2)\setminus \left(v_{0}-\rho_n,v_{0}+\rho_n\right) $, $v_2\in [-\pi,\pi]$ and 
\begin{equation*}
\left|v_2-v_{0}\right|>\left|v_1-v_{0}\right|,
\end{equation*}
then 
\begin{equation*}\Phi_{r_1}^{\zeta}(v_1)>\Phi_{r_1}^{\zeta}(v_2).
\end{equation*}
% are points $v_{max}$ and $v_{min}$, where 
%$v_{max}=\frac{-r}{q-4p}\left(1+o(1)\right)$ and 
%$v_{min}=\pi-\frac{r}{q+4p}\left(1+o(1)\right)$, such that 
%$\Phi_{r_1}^{\zeta}(v)$ is strictly increasing on 
%$(-2\pi+v_{min},v_{max})$ and strictly decreasing on 
%$(v_{max},v_{min})$. (Note: Have not really proven that there 
%are unique extreme points, only that all extreme points lie in 
%small intervals around $v_{max}$ and $v_{min}$; this is all we need.) 
%More precisely, 
%$v_{max}=\frac{-r}{q-4p}\left(1+\eta^2\mathcal{O}\left(\frac{n^{-1/3}}{\epsilon^3}\right)\right)$  if $\sigma_n\ll 1$ and $v_{max}=\frac{-r}{q-4p}\left(1+\eta^2\mathcal{O}\left(\frac{b_n^2}{n(1-\tau_n)^3}\right)\right)$ otherwise.
\end{lemma}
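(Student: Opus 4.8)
The plan is to study the real-valued trigonometric polynomial
\begin{equation*}
\Phi_{r_1}^{\zeta}(v)=r_1^2(1-2\cos 2v)+r_1c_n'(\cos v-1)-r_1b_n\eta\sin v
\end{equation*}
directly by differentiation and locate its critical points. Since $\Phi_{r_1}^{\zeta}$ is even in $v$ up to the last term and the dominant balance is between the $\cos 2v$ and $\cos v$ contributions (with $c_n' \asymp \sqrt{n}$ the largest coefficient), I expect a unique global maximum near $v=0$ and the function to be essentially unimodal on $(-\pi/2,\pi/2)$. First I would compute
\begin{equation*}
\frac{\ud}{\ud v}\Phi_{r_1}^{\zeta}(v)=4r_1^2\sin 2v-r_1c_n'\sin v-r_1b_n\eta\cos v
=r_1\sin v\,(8r_1\cos v-c_n')-r_1b_n\eta\cos v,
\end{equation*}
and observe that for $v$ bounded away from $\pm\pi/2$ the factor $(8r_1\cos v-c_n')$ has a definite sign: with $r_1\approx w_-\approx\sqrt{\tau_n n/2}$ and $c_n'\approx\sqrt{2n/\tau_n}(1+\tau_n)$ one has $8r_1<c_n'$ precisely because $4\tau_n<(1+\tau_n)^2$ fails only at $\tau_n$ near $1$ — so in fact I should check the sign of $8r_1-c_n'$ carefully, but for $r_1=w_-(1-\epsilon_n n^{-1/3})$ (or $r_1=w_-$) the coefficient stays negative and bounded away from $0$ on the relevant range. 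That monotonicity of the bracket, together with the smallness of $b_n\eta$ relative to $c_n'$, forces $\Phi_{r_1}^{\zeta}{}'$ to change sign exactly once on $(-\pi/2,\pi/2)$, at a point I can identify to leading order as $v_0=-b_n\eta/(c_n'-4r_1)$ by linearizing the derivative near $0$ (note $8r_1\cos v-c_n'\approx 8r_1-c_n'$ and the $\sin 2v$ term linearizes to $4r_1\cdot 2v$, while $\sin v$ linearizes to $v$, giving leading slope $(8r_1-c_n')v$; reconciling the stated $v_0$ with $c_n'-4r_1$ requires tracking which linearization the author intends, and I would adopt their normalization).

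The core of the argument is then a quantitative unimodality statement: I want to show $\Phi_{r_1}^{\zeta}$ is strictly decreasing in $|v-v_0|$ once $|v-v_0|>\rho_n$, on the full range needed (with $v_1$ restricted to $(-\pi/2,\pi/2)$ but $v_2$ allowed in $[-\pi,\pi]$). For $v\in(-\pi/2,\pi/2)$ this follows from the single sign change of the derivative established above, provided I also verify the second derivative at the critical point is negative (a maximum) and estimate $\rho_n$ as the radius within which the higher-order corrections to the derivative could in principle create spurious oscillation — this is where the two regimes diverge. In the regime $\sigma_n=o(1)$ the relevant small quantity controlling the curvature is governed by $\epsilon_n n^{1/3}$ (since $r_1$ is pulled off $w_-$ by $\epsilon_n n^{-1/3}$, making $f_n''$-type quantities of size $\asymp\epsilon_n n^{1/3}$ rather than $o(1)$), giving $\rho_n<C_1/(\epsilon_n n^{1/3})$; in the complementary regime the natural scale is $b_n^2/(n(1-\tau_n)^3)$, which is exactly the size of the quadratic-in-$\eta$ displacement relative to the curvature $f_n''(w_-)\asymp (1-\tau_n)/\tau_n$. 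I would extract $\rho_n$ by bounding $|\Phi_{r_1}^{\zeta}{}'(v)|$ from below by a positive multiple of $|v-v_0|$ times the curvature, for $|v-v_0|$ in an intermediate range, and then integrating.

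The remaining piece is the behaviour on $[-\pi/2,\pi/2]^c$: I must show $\Phi_{r_1}^{\zeta}(v_2)$ for $v_2$ outside $(-\pi/2,\pi/2)$ is smaller than $\Phi_{r_1}^{\zeta}(v_1)$ for any $v_1\in(-\pi/2,\pi/2)$ with $|v_1-v_0|<|v_2-v_0|$. Since $v_0$ is tiny, $|v_2-v_0|>|v_1-v_0|$ essentially means $|v_2|\gtrsim|v_1|$, so it suffices to show $\Phi_{r_1}^{\zeta}$ on $[\pi/2,\pi]\cup[-\pi,-\pi/2]$ is dominated by its values near $\pm\pi/2$, and that $\Phi_{r_1}^{\zeta}(\pm\pi/2)<\Phi_{r_1}^{\zeta}(v_1)$. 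The key inequality is that $r_1c_n'(\cos v-1)$ is strongly negative for $|v|\geq\pi/2$ (of size $\asymp r_1c_n'\asymp n$), which swamps the bounded-by-$3r_1^2\asymp n$ contribution of the $\cos 2v$ term — here I need the numerical fact that the $\cos v$ coefficient $r_1c_n'$ genuinely beats $2r_1^2$ on $[\pi/2,\pi]$, i.e. $c_n'>2r_1$, which holds comfortably since $c_n'/r_1\asymp (1+\tau_n)^2/\tau_n\geq 4$. The main obstacle I anticipate is the bookkeeping in the $\sigma_n=o(1)$ regime: because $r_1$ is deliberately displaced from the saddle $w_-$ by $\epsilon_n n^{-1/3}$, the cancellations that would hold at $w_-$ no longer do, and I must carefully track how this displacement propagates into the derivative estimates to land exactly on the claimed $\rho_n<C_1/(\epsilon_n n^{1/3})$ — in particular ensuring the linearized slope of $\Phi_{r_1}^{\zeta}{}'$ near $v_0$ does not degenerate. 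Once the sign-change count and the two explicit curvature lower bounds are in hand, the conclusion $\Phi_{r_1}^{\zeta}(v_1)>\Phi_{r_1}^{\zeta}(v_2)$ is immediate by monotone-rearrangement reasoning.
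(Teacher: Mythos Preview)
Your approach is essentially the paper's: differentiate $\Phi_{r_1}^{\zeta}$, show all critical points cluster in small neighborhoods (near $v_0$ and, by the same reasoning, near $\tilde v_0=\pi-b_n\eta/(c_n'+4r_1)$), and conclude monotonicity elsewhere. Your confusion over $8r_1$ versus $c_n'-4r_1$ comes from a typo in the paper's displayed formula for $\Phi_{r_1}^{\zeta}$: the first term should be $r_1^2(1-\cos 2v)$, not $r_1^2(1-2\cos 2v)$, as you can check directly from $\Phi+i\Psi=-f_n(r_1e^{iv})+f_n(r_1)+r_1(e^{iv}-1)(a_n\xi_1+ib_n\eta_1)$ (and as the paper itself uses in the later dominating-function estimate, where $-\Phi_{w_-}^{\zeta}(v_\eta)$ contributes $-w_-^2(1-\cos 2v_\eta)$). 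With the correct coefficient the derivative becomes $\Phi'(v)=r_1\cos v\bigl(\sin v\,(4r_1-c_n'/\cos v)-b_n\eta\bigr)$, and linearizing at $v=0$ gives exactly $v_0=-b_n\eta/(c_n'-4r_1)$, resolving the discrepancy you flagged.

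One refinement: for the region $|v|\geq\pi/2$ the paper does not compare magnitudes directly but repeats the critical-point localization to pin down $\tilde v_0\approx\pi$, obtaining strict monotonicity on the full arc $[v_0+\rho_n,\tilde v_0-\rho_n]$ (and symmetrically on the other side). Your direct size estimate ``$r_1c_n'(\cos v-1)$ swamps the rest'' only bounds $\Phi(v_2)$ from above by something of the same order as $\Phi(\pm\pi/2)$, so it would not separate $\Phi(v_1)$ from $\Phi(v_2)$ when $v_1$ is itself close to $\pm\pi/2$; you need the monotonicity across $\pm\pi/2$, which the full critical-point count gives for free.
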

%For $\sigma_n=\mathcal{O}(1)$
%\begin{equation}q-4p=n(1-\tau_n)+\epsilon n^{2/3}(3\tau_n-1)+2\tau_n\epsilon^2n^{1/3}+\xi_1\mathcal{O}\left(a_n\sqrt{n}\right), 
%\end{equation}
%where $n(1-\tau_n)\gg\epsilon n^{2/3}(3\tau_n-1)$ if $\sigma_n \to \sigma>0$ and$n(1-\tau_n)\ll\epsilon n^{2/3}(3\tau_n-1)$ if $\sigma_n \to 0$

\begin{proof} $\Phi_{r_1}^{\zeta}(v)$ is %$2\pi$-periodic and 
differentiable everywhere, 
so any local extreme points $v$ satisfy ${\Phi_{r_1}^{\zeta}}'(v)=0$, or
 \begin{equation}\label{extremepoints}
\sin v\left(4r_1-\frac{{c_n'}}{\cos v}\right)-b_n\eta
=0,
\end{equation} 
provided $\eta\neq 0$. Put $x=\sqrt{\frac{\tau_n n}{2}}-r_1$. 
Now
\begin{multline}
r_1(c_n'-4r_1)\\
=n(1-\tau_n)+(\delta_n+2x(3-1/\tau_n))\sqrt{\frac{\tau_n n}{2}}
+\mathcal{O}\left(x^2\right)+\mathcal{O}\left(a_n\sqrt{n}\right)>0 
\end{multline} 
if $n$ is sufficiently large, so
\begin{equation*}
\left|4r_1-\frac{{c_n'}}{\cos v}\right|\geq {c_n'}-4r_1
=\mathcal{O}\left(x\right)  \textrm{ if }\sigma_n \to 0,
\end{equation*}
and
\begin{equation*}\left|4r_1-\frac{{c_n'}}{\cos v}\right|\geq {c_n'}-4r_1
=\mathcal{O}\left(\sqrt{n}(1-\tau_n)\right)  \textrm{ otherwise.}
\end{equation*} 
In both cases,
\begin{equation*}
\left|\frac{b_n}{4r_1-{c_n'}/\cos v}\right|=o(1).
\end{equation*}
%\mathcal{O}\left(\frac{b_n}{\sqrt{n}(1-\tau_n)}\right)
Therefore, for any solution $v$ to (\ref{extremepoints}), 
$\sin v=\mathcal{O}\left(v_{0}\right)=o(1)$. 
Consider first the case that $|v|=o(1)$. 
Then there are numbers $\epsilon_1$ and $\epsilon_2$ with 
$|\epsilon_1|<|v^3|/6$ and $|\epsilon_2|<v^2$ such that 
(\ref{extremepoints}) can be written 
\begin{equation*}
(v-\epsilon_1)(4r_1-{c_n'}(1+\epsilon_2))-b_n\eta=0,
\end{equation*}
 which implies that 
there is a constant $C$ such that
\begin{equation*}
v\in (v_{0}-\rho_n,v_0+\rho_n),
\end{equation*} 
where
\begin{equation*}
\rho_n<C'\frac{v_0{c_n'}\epsilon_2}{{c_n'}-4r_1}<C\frac{v_0{c_n'}b_n^2\eta^2}{({c_n'}-4r_1^2)^3}.
\end{equation*}
%\begin{equation}
%v=v_{0}\left(1+\mathcal{O}
%\left(\frac{{c_n'}\epsilon_2}{{c_n'}-4r_1}\right)\right)
%=v_0\left(1+
%\mathcal{O}\left(\frac{{c_n'}b_n^2\eta^2}{({c_n'}-4r_1^2)^3}\right)\right).
%\end{equation} 
Similarly, if $|v-\pi|= o(1)$ a Taylor expansion of (\ref{extremepoints}) 
gives the 
condition $v\in (\tilde{v}_0-\rho_n,\tilde{v}_0+\rho_n)$
where  $\tilde{v}_0=\pi-b_n\eta/({c_n'}+4r_1)$.
%$\in \pi-b_n\eta/({c_n'}+4r_1)$ 
It follows that 
$\Phi_{r_1}^{\zeta}(v)$ 
is strictly decreasing on $[v_0+\rho_n,\tilde{v}_0-\rho_n]$ and strictly 
increasing on $[\tilde{v}_0-2\pi+\rho_n,{v}_0-\rho_n]$.
Clearly, any possible local 
maxima of 
$\Phi_{r_1}^{\zeta}(v)$ near $\tilde{v}_0$ are smaller than, say,   
$\Phi_{r_1}^{\zeta}(\pm \pi/2)$, so evaluating the order of magnitude of $\rho_n$ and remembering that $\Phi_{r_1}^{\zeta}$ is $2\pi$-periodic gives the conclusion. 
\end{proof}

\subsection{Estimates on  $G(\zeta_1,\zeta_2)$ when $\sigma_n$ tends to infinity}% \to \infty $.}
\label{estimatesgumbel}
The two cases, depending on whether or not $\sigma_n=\mathcal{O}(1)$, 
require different choices of the parameters. In this section, estimates 
on $G(\zeta_1,\zeta_2)$ are provided for 
the case $\sigma_n \to \infty $. 
In this regime, the contribution to the exponent in  
$F_{r_1,r_2}^{\zeta_1,\zeta_2}$ from the smaller terms depending on 
the $\eta$-variables is not negligible, so the main contribution 
to $G(\zeta_1,\zeta_2)$ actually comes from intervals $I_n$, $J_n$ 
which in general do not contain the saddle points of $f_n$. 

First, the main contribution
to the integral is estimated in Lemma \ref{localgumbel}, then it is shown 
in Lemma \ref{globalgumbel} that 
the remaining contribution is negligible, and finally in Lemma \ref{integralform} 
the main contribution is explicitly evaluated. 
These estimates 
provide the basis for the proof of part (i) of Theorem \ref{main} in Section \ref{proofgumbel}. 

Throughout this section we will suppose that $\sigma_n \to \infty$ as $n \to \infty$ and that
\begin{equation}\label{deltagumbeldef}
\delta_n
\asymp\sqrt{(1-\tau_n)\log\sigma_n}=n^{-2/3}\sigma_n\sqrt{\log \sigma_n},
\end{equation}
\begin{equation}\label{agumbeldef}
a_n\asymp
\sqrt{\frac{(1-\tau_n)}{\log\sigma_n}}=\frac{n^{-2/3}\sigma_n}{\sqrt{\log \sigma_n}},
\end{equation}
and
%a_n=\sqrt{\frac{(1+\tau_n)}{\tau_n}}\sqrt{\frac{(1-\tau_n)}{6\log\sigma_n}},\end{equation}
\begin{equation}\label{bgumbeldef}
b_n\asymp\left(\frac{n(1-\tau_n)^5}{\log\sigma_n}\right)^{1/4}=\frac{n^{-2/3}\sigma_n^{5/2}}{(\log \sigma_n)^{1/4}}.\end{equation}
Since $\delta_n>0$, 
%\begin{equation}\label{denominator}
%\tau_n w_+ -w_-
%=\frac{\tau_n\delta_n}{(1-\tau_n)}
%\left(1+\mathcal{O}\left(x_n\right)\right)>0, \end{equation} 
%where $x_n=\delta_n n^{-1/2}(1-\tau_n)^{-2}$ as above, and 
we may 
choose $r_1=w_-$ and $r_2=w_+$ %in the representation 
%(\ref{rescaledkernel}) of $\tilde{K}_n^{\tau_n}$, 
by  
(\ref{denominator}).  
Now choose the parameters $\alpha_n$ and $\theta_n$ so that the exponent in 
$F_{r_1,r_2}^{\zeta_1,\zeta_2}(s,t)$ becomes of order $-s^2/2+t^2/2$, namely
\begin{equation}\label{Alpha}
\alpha_n%\sqrt{\frac{w_+}{4w_+-c_n}}\nonumber\\
=\sqrt{\frac{w_+}{2(w_+-w_-)}}
=\frac{1}{\sqrt{2(1-\tau_n)}}
\left(1-\frac{\tau_n^{3/2}\delta_n}{\sqrt{2n}(1-\tau_n)^2}
+\mathcal{O}\left(x_n^2\right)\right),
\end{equation}
and
\begin{equation}\label{Theta}
\theta_n%=\frac{1}{\sqrt{w_-(c_n-4w_-)}}\nonumber\\
=\frac{1}{\sqrt{2w_-(w_+-w_-)}}=\frac{1}{\sqrt{n(1-\tau_n)}}
\left(1-\frac{\tau_n^{3/2}\delta_n}{\sqrt{2n}(1-\tau_n)^2}
+\mathcal{O}\left(x_n^2\right)\right).
\end{equation}
%\theta_n&=&(w_-(c_n+a_n\xi_2-4w_-))^{-1/2}\nonumber\\
%&=&\left(n(1-\tau_n)\right)^{-1/2}\left(1+\mathcal{O}\left(\frac{\delta_n}{\sqrt{n}(1-\tau_n)^2}\right)+\xi_1\mathcal{O}\left(\frac{a_n}{\sqrt{n}(1-\tau_n)}\right)\right)\nonumber\\
%&=&\left(n(1-\tau_n)\right)^{-1/2}\left(1+\mathcal{O}\left(\frac{\delta_n}{\sqrt{n}(1-\tau_n)^2}\right)\right),\end{eqnarray}
%for $\xi_1$ in a compact set. 
%so(-----For $\xi_1$ bounded, yes; but should the second error term be retained since it depends on $\xi_1$?
We observe that by definition the parameters are related through the equation 
\begin{equation}\label{alphatheta1}
\alpha_n^2=1/2+\theta_n^2w_-^2
\end{equation} 
and that 
\begin{equation}\label{alphatheta2}
\tau_n\alpha_n^2-\theta_n^2w_-^2
=\frac{\tau_n^{3/2}\delta_n}{\sqrt{2n}(1-\tau_n)^2}
\left(1+\mathcal{O}\left(x_n\right)\right).
\end{equation}

First we estimate the contribution to $G(\zeta_1,\zeta_2)$ 
from near the (slightly shifted) saddle points.  
\begin{lemma}\label{localgumbel}
%\emph{Local estimates when $\sigma_n \to \infty$.} 
%Suppose $\sigma_n \to \infty$.
Put $s_0=-\theta_n w_- b_n \eta_1$ and $t_0= -\alpha_n b_n \eta_2$ 
and let $M_n$, such that $1 \ll M_n \ll \sqrt{\log{\sigma_n}}$, be given. 
Define the sets 
$I_n=\{s:|s-s_0|\leq M_n\}$ 
and 
$J_n=\{t:|t-t_0|\leq M_n\}$ and let 
$\phi_n=n b_n^3(w_+-w_-)^{-3}/{24}$.
%$\phi_n=nw_+^{-3}\alpha_n^6 b_n^3/3$. 
Then, for every fixed 
$(\zeta_1,\zeta_2)\in\mathbb{R}^4$, 
\begin{multline*}
\iint_{I_n\times J_n}F_{w_-,w_+}^{\zeta_1,\zeta_2}(s,t)\ud t\ud s\\
=e^{i\phi_n(\eta_2^3-\eta_1^3)}\iint_{I_n\times J_n}
\frac{e^{-\frac{1}{2}t^2-\frac{1}{2}s^2
-i\alpha_n t(a_n \xi_2-ib_n\eta_2)
+iw_- \theta_n s(a_n \xi_1+ib_n\eta_1)}}
{w_- -\tau_n w_+ +iw_- \theta_n s-i\tau_n \alpha_n t}\ud t \ud s\\
+o(1)\frac{ e^{\frac{1}{2}(t_0^2+s_0^2)}}
{|w_- -\tau_n w_+ +iw_- \theta_n s_0-i\tau_n \alpha_n t_0|}.
\end{multline*}
%where the order of the error term is given in Equation 
%(\ref{errorlocalgumbel}).

%\begin{eqnarray}
%|R_n|=o(1)
%&\leq& C_1\left(\frac{b_n^4}{n(1-\tau_n)^4}
%+\frac{b_n^2M_n}{\sqrt{n}(1-\tau_n)^{5/2}}
%+\frac{b_n^2}{\sqrt{n}(1-\tau_n)\delta_n}
%+\frac{b_n^2a_n}{\sqrt{n}(1-\tau_n)^2}\right)\nonumber\\
%&\leq&C\left(\frac{(1-\tau_n)}{\log \sigma_n}+\frac{M_n}{\sqrt{\log \sigma_n}}
%+\frac{(1-\tau_n)}{\sqrt{\log \sigma_n}}
%+\frac{(1-\tau_n)}{(\log \sigma_n)^{3/4}}\right)\nonumber\\
%&=&o(1)
%\end{eqnarray}
%\left((1+\eta_1^4+\eta_2^4)\mathcal{O}\left(\frac{b_n^4}{n(1-\tau_n)^4}\right)+%\mathcal{O}\left(\frac{M_n^4}{n(1-\tau_n)^2}\right)
%(1+\eta_1^2+\eta_2^2)\mathcal{O}\left(\frac{b_n^2M_n}{\sqrt{n}(1-\tau_n)^{5/2}}\right)+%\mathcal{O}\left(\frac{M_n^3}{\sqrt{n (1-\tau_n)^{3}}}\right)+\right.\\\left.
%(1+\eta_1^2)\mathcal{O}\left(\frac{b_n^2}{\sqrt{n}(1-\tau_n)\delta_n}\right)%+\mathcal{O}\left(\frac{M_n^2}{\sqrt{n}\delta_n}\right)
\end{lemma}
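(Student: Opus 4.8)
The plan is to perform a Taylor expansion of the full integrand $F_{w_-,w_+}^{\zeta_1,\zeta_2}(s,t)$ around the shifted saddle points $s_0,t_0$, keeping terms up to third order in the exponent, and to show that on the small windows $I_n\times J_n$ the only surviving contributions are the Gaussian factors $-\tfrac12 s^2-\tfrac12 t^2$, the linear terms in $\xi_j,\eta_j$ coming from the rescaling, and the cubic terms $i\phi_n(\eta_2^3-\eta_1^3)$; everything else must be shown to be $o(1)$ uniformly on the windows after the main constant prefactor has been pulled out. First I would write out $\Phi_{w_-}^{\zeta_1}(\theta_n s)+i\Psi_{w_-}^{\zeta_1}(\theta_n s)$ and $\Xi_{w_+}^{\zeta_2}(t)$ explicitly, expand $n\log(1+i\alpha_n t/w_+)$ and the trigonometric expressions in powers of their (small) arguments, and substitute the asymptotics (\ref{Alpha}), (\ref{Theta}) for $\alpha_n,\theta_n$, the expansions for $w_\pm$, and the choices (\ref{agumbeldef}), (\ref{bgumbeldef}) for $a_n,b_n$. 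The choice of $\alpha_n,\theta_n$ is precisely designed so the quadratic part becomes $t^2/2-s^2/2$; the shift to $s_0,t_0$ is designed so the linear-in-$t,s$ terms involving $\eta_j$ are absorbed, and one checks the remaining linear terms reproduce $-i\alpha_n t\, a_n\xi_2 + i w_-\theta_n s\, a_n\xi_1$ plus the claimed $\eta$-dependent pieces. For the cubic terms I would use $f_n^{(3)}(w)=2nw^{-3}$ together with the $\Theta$-scaling to see that the $s^3$ and $t^3$ contributions are of order $\theta_n^3 n w_-^{-3}M_n^3$ and $\alpha_n^3 n w_+^{-3}M_n^3$ respectively, which is $o(1)$ since $M_n \ll \sqrt{\log\sigma_n}$ and these coefficients are $\asymp n^{-1/2}$ up to logs; but the mixed cubic term of the form $n b_n^3 w^{-3}\eta^3$ is \emph{not} small — it is exactly $\phi_n(\eta_2^3-\eta_1^3)$, which is why it is pulled out as the pure phase $e^{i\phi_n(\eta_2^3-\eta_1^3)}$ rather than being discarded. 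Higher-order terms ($k\ge 4$) I would bound crudely using $f_n^{(k)}(w)\asymp n w^{-k}$ and the window size.

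Next I would treat the denominator $w_- e^{i\theta_n s}-\tau_n(w_++i\alpha_n t)$: expanding $e^{i\theta_n s}=1+i\theta_n s+O(\theta_n^2 s^2)$ on $I_n$, the quadratic correction is $O(\theta_n^2 M_n^2)=o(1)/\sqrt{n}$, negligible compared with the leading size of the denominator, which by (\ref{denominator}) is $\asymp \tau_n\delta_n/(1-\tau_n)$ — and one must check this leading size dominates the linear perturbations $i w_-\theta_n s - i\tau_n\alpha_n t$ on the window, or at least that replacing the exponential by its linearization changes the quotient by a relatively $o(1)$ amount. Here I would factor out $e^{g_\sigma(v_0)+f_\sigma(u_0)}$-type constants — concretely, pull the value of the exponent at the saddle out front (this is already done in Lemma \ref{parameterization}) and estimate the ratio of the perturbed integrand to the model integrand. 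The error term $o(1)\,e^{(t_0^2+s_0^2)/2}/|w_--\tau_n w_++\cdots|$ on the right-hand side is exactly the natural scale: $e^{(t_0^2+s_0^2)/2}$ is the order of magnitude of $\iint_{I_n\times J_n}$ of the model Gaussian integrand (shifted by $s_0,t_0$, so its maximum is $e^{(s_0^2+t_0^2)/2}$ up to the denominator), so the claim is that the difference between the true integrand and the model integrand, integrated over the window, is $o(1)$ times that scale.

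The main obstacle, I expect, is the bookkeeping of competing small parameters: one has three independent small quantities — roughly $n^{-1/3}$ (from $a_n,\theta_n$), $1/\sqrt{\log\sigma_n}$, and $\sigma_n^{-1}$ (or equivalently $x_n=\delta_n/(\sqrt n(1-\tau_n)^2)$) — and the window radius $M_n$ with $1\ll M_n\ll\sqrt{\log\sigma_n}$, and one must verify that \emph{every} discarded term is genuinely $o(1)$ while the retained cubic phase $\phi_n(\eta_2^3-\eta_1^3)$ is $O(1)$ (indeed $\phi_n\asymp n b_n^3(1-\tau_n)^{-3}\asymp (\log\sigma_n)^{-3/4}\cdot$ something — one must track whether $\phi_n$ itself stays bounded or tends to a constant, since it reappears in the $\sigma\to\infty$ limit of the proof of Theorem \ref{main}(i)). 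In particular the cross-terms between the $\xi$-linear shift, the $\eta$-linear shift $s_0,t_0$, and the cubic corrections produce terms like $\theta_n^3 w_-^{-3} n \cdot s_0^2 s$, $\alpha_n^3 w_+^{-3} n\cdot t_0 t^2$, etc., and the hard part is organizing these systematically — e.g. via a single uniform bound $|F - F_{\mathrm{model}}| \le (\text{explicit }o(1))\cdot|F_{\mathrm{model}}|$ valid on $I_n\times J_n$ — rather than estimating each of the dozen or so terms ad hoc, so that integrating over the window and using $\iint_{I_n\times J_n}|F_{\mathrm{model}}| = O(e^{(s_0^2+t_0^2)/2}/|\text{denom}|)$ closes the argument.
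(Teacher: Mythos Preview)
Your approach is correct and essentially the same as the paper's: Taylor-expand the exponent, show that the imaginary second- and third-order terms are approximately the constant $i\phi_n(\eta_2^3-\eta_1^3)$ on $I_n\times J_n$ with $o(1)$ variation, linearize the denominator, and bound $|F\,e^{-i\phi_n(\cdots)}-F_{\mathrm{model}}|\le o(1)\,|F_{\mathrm{model}}|$ pointwise before integrating over the window.

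Two clarifications on the points you flagged as uncertain. First, $\phi_n\asymp\sigma_n^{3/2}(\log\sigma_n)^{-3/4}\to\infty$, so the extracted phase is genuinely large, not $O(1)$; it is harmless here only because it is unimodular, and it is removed later by the conjugation $K_n'=(F_n(\zeta_1)/F_n(\zeta_2))\tilde K_n^{\tau_n}$ in the proof of Theorem~\ref{main}(i). Second, the constant part $\tau_n w_+-w_-$ of the denominator does \emph{not} dominate the linear perturbations $iw_-\theta_n s_0-i\tau_n\alpha_n t_0$ (their ratio grows like $\sigma_n^{3/2}(\log\sigma_n)^{-3/4}$), so you must use your weaker alternative: the quadratic correction $w_-(e^{i\theta_n s}-1-i\theta_n s)=O(w_-\theta_n^2 s_0^2)$ is $o(1)$ relative to the full linearized denominator $w_--\tau_n w_++iw_-\theta_n s-i\tau_n\alpha_n t$, which is what the paper verifies in (\ref{denominatorerror}).
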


\begin{proof}
We expand the exponent in $F_{w_-,w_+}^{\zeta_1,\zeta_2}(s,t)$  and estimate for $s\in I_n$ and $t\in J_n$. Note that, since 
\begin{equation*}
w_-(c_n'-w_-)\theta_n^2
=1+\frac{a_n\xi_1}{(c_n-4w_-)}
=1+\mathcal{O}\left(\frac{1}{\sqrt{(1-\tau_n)n\log\sigma_n}}\right),
\end{equation*} 
%we have
%\begin{multline}\label{taylorphi}
%\Phi_{w_-}^{\zeta_1}(\theta_n s)\\
%=\frac{1}{2}s_0^2-\frac{1}{2}(s-s_0)^2+\frac{w_-b_n\eta}{6}(\theta_ns)^3
%+\mathcal{O}\left(n(\theta_ns)^4\right)
%+\mathcal{O}\left(\frac{s^2}{\sqrt{(1-\tau_n)n\log\sigma_n}} \right),
%\end{multline}
 the expansion of the exponent becomes
\begin{align}\label{taylorgumbel}
%&&n\log\left(1+\frac{i\alpha_n t}{w_+}\right)+2i\alpha_nw_+t-\alpha_n^2
%t^2-i\alpha_n t(c_n+a_n \xi_2-ib_n\eta_2)\nonumber\\
\Xi_{w_+}^{\zeta_2}(t)+\Phi_{w_-}^{\zeta_1}(\theta_n s)+i&\Psi_{w_-}^{\zeta_1}(\theta_n s)\nonumber\\
=-\frac{1}{2}t^2-i\alpha_n t&(a_n \xi_2-ib_n\eta_2) -\frac{1}{2}s^2
+iw_-\theta_n s(a_n \xi_1+ib_n\eta_1)\nonumber\\
-i\frac{n\alpha_n^3}{3w_+^3}t^3&
+i\frac{\theta_n^3w_-(8w_-- c_n')}{6}s^3-i\frac{w_-\theta_n^2b_n\eta_1}{2}s^2\nonumber\\
&+\mathcal{O}\left(\frac{b_n^4}{n(1-\tau_n)^4}\right)+\mathcal{O}\left(\frac{(1-\tau_n)}{\log \sigma_n}\right).
\end{align}
The imaginary second and third order terms in (\ref{taylorgumbel}) 
are in general not small 
in $I_n\times J_n$, however they are almost constant. For the term involving
$\eta_2$,   
\begin{equation}\label{imaginary2}
\left|i\phi_n\eta_2^3-\left(-\frac{in\alpha_n^3t^3}{3w_+^3}\right)\right|
%&=\left|\frac{in\alpha_n^3}{3w_+^3}(-t_0^3+t^3)\right|\nonumber\\
\leq \frac{n\alpha_n^3}{w_+^3}M_n\max(t_0^2,M_n^2)
\leq C\frac{b_n^2M_n}{\sqrt{n}(1-\tau_n)^{5/2}}.
\end{equation} 
Since
\begin{equation*}w_-(8w_--c_n')=n(3\tau_n-1)
+\mathcal{O}\left(\frac{\sqrt{n}\delta_n}{1-\tau_n}\right)
%+\xi_1\mathcal{O}\left(\sqrt{n}a_n\right)
\end{equation*} 
and
\begin{equation*}\frac{n\theta_n^6w_-^3b_n^3}{3}
=\phi_n\left(1+\mathcal{O}\left(x_n\right)\right),
\end{equation*}
the terms involving  $\eta_1$ can be similarly estimated; %\neq0$,% since $\delta_n\gg a_n(1-\tau_n)$, 
\begin{multline}\label{imaginary1}
\left|i\phi_n\eta_1^3-\left(i\frac{w_-\theta_n^2b_n\eta_1}{2}s^2
-i\frac{\theta_n^3w_-(8w_--c_n')}{6}s^3\right)\right|\\
=\left|i\phi_n\eta_1^3-in\theta_n^6w_-^3b_n^3\eta_1^3
\left(\frac{(s/s_0)^2}{2n(\theta_n)^2}+\frac{(s/s_0)^3\left(n(3\tau_n-1)(1+
o(1))\right)}{6n}\right)\right| \\
%\leq C_1n\theta_n^3|s_0|^3\left|\frac{s^3-s_0^3}{s_0^3}\right|
% \leq C_2n\theta_n^3M_n\max(s_0^2,M_n^2)
\leq C\frac{b_n^2M_n}{\sqrt{n}(1-\tau_n)^{5/2}}.
\end{multline}
Furthermore, %(---$M_n$---)
\begin{multline}\label{denominatorerror}
w_-e^{i\theta_n s}-\tau_n (w_++i\alpha_n t)\\%=w_-+i\theta_nw_-s+\mathcal{O}\left(w_-(\theta_ns)^2\right)-\tau_n (w_++i\alpha_n t)\\
%&&=\left(w_-+i\theta_nw_-s-\tau_n (w_++i\alpha_n t)\right)\left(1+\mathcal{O}\left(\frac{w_-(\theta_ns)^2}{w_--\tau_nw_+}\right)\right)\nonumber\\
=\left(w_-+i\theta_nw_-s-\tau_n (w_++i\alpha_n t)\right)
\left(1+\mathcal{O}\left(\frac{b_n^2}{\sqrt{n}(1-\tau_n)\delta_n}\right)%+\mathcal{O}\left(\frac{M_n^2}{\sqrt{n}\delta_n}\right)
\right).
\end{multline}
Using Equations (\ref{taylorgumbel}), (\ref{imaginary2}), (\ref{imaginary1}) and 
(\ref{denominatorerror}) and noting that the error terms they give rise to are all small, gives an estimate 
\begin{align*}
&\left|e^{-i\phi_n(\eta_2^3-\eta_1^3)}
\iint_{I_n\times J_n}F_{w_-,w_+}^{\zeta_1,\zeta_2}(s,t)\ud t\ud s\right.
\nonumber\\
&-\left.\iint_{I_n\times J_n}\frac{e^{-\frac{1}{2}t^2 
-\frac{1}{2}s^2-i\alpha_n t(a_n \xi_2-ib_n\eta_2)+
iw_- \theta_n s(a_n \xi_1+ib_n\eta_1)}}
{w_- -\tau_n w_+ +iw_- \theta_n s-i\tau_n \alpha_n t}\ud t \ud s\right|\nonumber\\
\leq & |R_n |\iint_{I_n\times J_n}
\left|\frac{e^{-\frac{1}{2}t^2-\frac{1}{2}s^2-i\alpha_n t(a_n \xi_2-ib_n\eta_2)
+iw_- \theta_n s(a_n \xi_1+ib_n\eta_1)}}
{w_- -\tau_n w_+ +iw_- \theta_n s-i\tau_n \alpha_n t}\right|\ud t \ud s\nonumber\\
%=&o(1) \frac{1%\mathcal{O}\left(\frac{M_n\sqrt{(1-\tau_n)}}{\delta_n}\right)
%}{|w_- -\tau_n w_+ +iw_- \theta_n s_0-i\tau_n \alpha_n t_0|}\nonumber\\
%&\times\iint_{I_n\times J_n}\exp\left\{-\frac{1}{2}t^2-\frac{1}{2}s^2+\alpha_nb_n\eta_2t-w_- \theta_nb_n\eta_1s\right\}\ud t \ud s\nonumber\\
=&(1+ o(1))|R_n|\frac{ e^{\frac{1}{2}(t_0^2+s_0^2)}}
{|w_- -\tau_n w_+ +iw_- \theta_n s_0-i\tau_n \alpha_n t_0|},
\end{align*}
where $R_n=o(1)$
and the conclusion follows.
%\zeta_1,\zeta_2)\left(1+\mathcal{O}\left(R_n\right)\right)%\frac{\exp\left\{n\log\left(1+\frac{i\alpha_n t}{w_+}\right)+2i\alpha_nw_+t-\alpha_n^2t^2-i\alpha_n t(c_n+a_n \xi_2-ib_n\eta_2)+\Phi(\theta_n s)+i\Psi(\theta_n s)\right\}}{w_-e^{i\theta_n s}-\tau_n (w_++i\alpha_n t)}-\right.\\
%where
%\begin{multline} 
%|R_n|\leq C
%\left(\frac{b_n^4}{n(1-\tau_n)^4}
%+\frac{b_n^2M_n}{\sqrt{n}(1-\tau_n)^{5/2}}
%+\frac{b_n^2a_n}{\sqrt{n}(1-\tau_n)^2}
%+\frac{b_n^2}{\sqrt{n}(1-\tau_n)\delta_n}\right).
%\end{multline}
\end{proof}

Next, it is shown that the contributions from outside the set $I_n\times J_n$ 
to $G(\zeta_1,\zeta_2)$ and its 
approximation, given in the previous lemma, are negligible. 
\begin{lemma}\label{globalgumbel}
%\emph{(Global estimates when $\sigma_n \to \infty$.)} 
Define $s_0$, $t_0$, $M_n$, $I_n$, and $J_n$ as in Lemma \ref{localgumbel} 
and let $\theta_n^{-1}\mathbb{T}$ denote the set 
$(-\pi/\theta_n,\pi/\theta_n)$. 
Then, for $(\zeta_1,\zeta_2)\in \mathbb{R}^4$ fixed, 
\begin{multline}
\label{globalgumbelexact}
\left|\iint_{\left(\theta_n^{-1}\mathbb{T}\times\mathbb{R}\right)
\setminus (I_n\times J_n)}
F_{w_-,w_+}^{\zeta_1,\zeta_2}(s,t)\ud t\ud s\right|\\
%\leq C\frac{e^{-M_n^2/2}}{M_n}
=o(1)\frac{e^{\frac{1}{2}(t_0^2+s_0^2)}}
{|w_- -\tau_n w_+ +iw_- \theta_n s_0-i\tau_n \alpha_n t_0|}
\end{multline}
and
\begin{multline*}%\label{globalgumbelapprox}
\left|\iint_{\mathbb{R}^2\setminus(I_n\times J_n)}
\frac{e^{-\frac{1}{2}t^2
-\frac{1}{2}s^2-i\alpha_n t(a_n \xi_2-ib_n\eta_2)
+iw_- \theta_n s(a_n \xi_1+ib_n\eta_1)}}
{w_- -\tau_n w_+ +iw_- \theta_n s-i\tau_n \alpha_n t}\ud t\ud s\right|\\
%\leq C \frac{e^{-M_n^2/2}}{M_n}
=o(1)\frac{e^{\frac{1}{2}(t_0^2+s_0^2)}}{|w_- -\tau_n w_+ 
+iw_- \theta_n s_0-i\tau_n \alpha_n t_0|}.
\end{multline*}
\end{lemma}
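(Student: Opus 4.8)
The plan is to prove both estimates in Lemma~\ref{globalgumbel} by splitting the region of integration into ``far'' and ``near'' pieces and bounding each by the corresponding Gaussian factor evaluated at the shifted saddle point. For the second estimate, the integrand is a genuine Gaussian in $s$ and $t$ (up to the slowly varying denominator $w_-e^{i\theta_n s}-\tau_n(w_++i\alpha_n t)$, whose modulus is bounded below by $|\tau_n w_+-w_-|\asymp\tau_n\delta_n/(1-\tau_n)$ by~(\ref{denominator})), so I would just complete the square: the exponent has real part $-\tfrac12(s-s_0)^2-\tfrac12(t-t_0)^2+\tfrac12(s_0^2+t_0^2)$ plus lower-order corrections coming from the imaginary shifts $a_n\xi_j$, which are $o(1)$ since $a_n=o(1)$. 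Integrating the Gaussian over $\mathbb{R}^2\setminus(I_n\times J_n)$ produces a factor $O(e^{-M_n^2/2}/M_n)=o(1)$ relative to the full integral, and dividing by the lower bound on the denominator modulus gives exactly the right-hand side. The only subtlety is that $|w_--\tau_nw_++iw_-\theta_ns_0-i\tau_n\alpha_nt_0|$ in the denominator of the claimed bound is of the same order as $|\tau_nw_+-w_-|$ (since $w_-\theta_ns_0$ and $\tau_n\alpha_nt_0$ are $\asymp b_n^2/\sqrt n(1-\tau_n)\asymp\delta_n$ as well up to logarithmic factors), so replacing one by the other costs only a bounded constant, which is absorbed in the $o(1)$.

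The first estimate, (\ref{globalgumbelexact}), is the harder one, because outside $I_n$ the function $F_{w_-,w_+}^{\zeta_1,\zeta_2}$ is no longer approximated by a Gaussian --- the cubic (and higher) terms in the Taylor expansion of $f_n$ at $w_-$ cannot be ignored globally. Here I would use the monotonicity statement of Lemma~\ref{trigfunction}: with $r_1=w_-$, the real part $\Phi_{w_-}^{\zeta_1}(\theta_n s)$ of the exponent attains its maximum near $s_0/\theta_n\cdot\theta_n = s_0$ (more precisely near $v_0 = -b_n\eta_1/(c_n'-4w_-)$, and $\theta_n s_0 = v_0$ by construction of $s_0$) and decreases monotonically as one moves away from $v_0$ along the contour, outside a window of radius $\rho_n$. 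Since $M_n\gg1\gg\rho_n\theta_n^{-1}$ (one checks $\rho_n = O(b_n^2/n(1-\tau_n)^3)$ is much smaller than $M_n\theta_n$), the interval $I_n$ contains this window, so on $\theta_n^{-1}\mathbb{T}\setminus I_n$ we have $\Phi_{w_-}^{\zeta_1}(\theta_n s)\le\Phi_{w_-}^{\zeta_1}(\theta_n(s_0\pm M_n))$. A second-order expansion at $s_0$ shows $\Phi_{w_-}^{\zeta_1}(\theta_n(s_0\pm M_n))\le\Phi_{w_-}^{\zeta_1}(\theta_ns_0)-cM_n^2$ for a positive constant $c$ (using $\Phi'' < 0$ near the max, which follows from $\alpha_n,\theta_n$ being chosen so the quadratic term is $-\tfrac12$). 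Combined with the crude bound $\Xi_{w_+}^{\zeta_2}(t)\le-ct^2+C$ on the $t$-integral (valid for all $t\in\mathbb{R}$ since $w_+$ is a genuine maximum of $f_n$ along $\Gamma_{w_+}$, as in the derivation of~(\ref{globalinterpolating})), and the lower bound on the denominator, the whole integral over $\theta_n^{-1}\mathbb{T}\setminus I_n$ is bounded by a constant times $e^{\Phi_{w_-}^{\zeta_1}(\theta_ns_0)-cM_n^2}\cdot e^{(\text{max of }\Xi)}$ divided by $\delta_n/(1-\tau_n)$; writing $\Phi_{w_-}^{\zeta_1}(\theta_ns_0)+\tfrac12s_0^2 = O(1)$ and $\max\Xi + \tfrac12 t_0^2 = O(1)$ (both differences are $o(1)$ by the expansions already established in Lemma~\ref{localgumbel}) shows this is $e^{-cM_n^2}\cdot(1+o(1))$ times the claimed right-hand side, hence $o(1)$ times it.

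There are two places where care is needed. First, the region $\theta_n^{-1}\mathbb{T}\setminus I_n$ has to be split further: for $|s-s_0|$ between $M_n$ and, say, a small fixed multiple of $\theta_n^{-1}$ the local expansion is still accurate and the Gaussian decay argument above applies verbatim; for $s$ of order $\theta_n^{-1}$ (bounded away from $0$ and $\pm\pi/\theta_n$) one must use the full monotonicity of Lemma~\ref{trigfunction} down to $\pm\pi/2$ and then the $2\pi$-periodicity and the explicit comparison with $\Phi_{w_-}^{\zeta_1}(\pm\pi/2)$ --- one checks $\Phi_{w_-}^{\zeta_1}(\pm\pi/2)-\Phi_{w_-}^{\zeta_1}(v_0)\le -cn(1-\tau_n)\to-\infty$, which dominates everything. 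Second, the $t$-integral over the full real line requires the estimate $n\log(1+i\alpha_n t/w_+)-\alpha_n^2t^2\le -\tfrac12\alpha_n^2 t^2 + O(1)$ for large $|t|$; this follows because $\operatorname{Re}\log(1+ix) = \tfrac12\log(1+x^2)$ grows only logarithmically while the $-\alpha_n^2t^2$ term is quadratic, exactly as in the bound on $M_{\sigma}(\zeta,\zeta)$ in the proof of Theorem~\ref{interpolatingkernel}. I expect the bookkeeping of constants across the two sub-regions of the $s$-integral --- making sure the ``$O(1)$'' exponential prefactors really are bounded uniformly in $n$ and in the fixed $\zeta$ --- to be the main obstacle, but no new idea beyond Lemma~\ref{trigfunction} and the saddle-point expansions already in hand is required.
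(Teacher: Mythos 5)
The proposal has a genuine gap in the treatment of the denominator $w_-e^{i\theta_n s}-\tau_n(w_++i\alpha_n t)$. In both estimates you lower-bound its modulus uniformly by $\tau_nw_+-w_-\asymp\tau_n\delta_n/(1-\tau_n)$ (via (\ref{denominator})) and then assert that replacing $\tau_nw_+-w_-$ by the claimed denominator $|w_--\tau_nw_++iw_-\theta_ns_0-i\tau_n\alpha_nt_0|$ ``costs only a bounded constant.'' That is false, and this step is precisely the crux of the lemma. Since $\theta_n^2w_-^2\asymp\tau_n\alpha_n^2\asymp(1-\tau_n)^{-1}$, the imaginary part $w_-\theta_ns_0-\tau_n\alpha_nt_0$ is of order $b_n/(1-\tau_n)$ whenever $\eta_1$ and $\eta_2$ do not cancel, while $\tau_nw_+-w_-\asymp\delta_n/(1-\tau_n)$. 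Their ratio is $b_n/\delta_n\asymp\sigma_n^{3/2}(\log\sigma_n)^{-3/4}\to\infty$. So for a fixed $\zeta$ with $\eta_1\neq\eta_2$, the bound you obtain after ``dividing by the lower bound on the denominator'' exceeds the claimed right-hand side by an unbounded factor, and the $e^{-M_n^2/2}$ Gaussian tail cannot absorb it: compensating would require $e^{M_n^2/2}\gg\sigma_n^{3/2}(\log\sigma_n)^{-3/4}$, i.e.\ $M_n\gtrsim\sqrt{\log\sigma_n}$, which violates the standing constraint $M_n\ll\sqrt{\log\sigma_n}$ in Lemma \ref{localgumbel}.

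The paper resolves this by introducing an intermediate scale $N_n$ with $\alpha_n M_n\ll\alpha_nN_n\ll|w_-\theta_ns-\tau_n\alpha_nt_0|\ll\alpha_nN_n^2$ and splitting the $t$-integral at $N_n$ (and similarly the $s$-integral at $L_n$ to interface with Lemma \ref{trigfunction}). In the annulus $M_n<|u|<N_n$ the denominator stays comparable to its value at $t_0$, so the correct factor $|w_--\tau_nw_++iw_-\theta_ns_0-i\tau_n\alpha_nt_0|^{-1}$ is retained rather than the much larger $(\tau_nw_+-w_-)^{-1}$, while the Gaussian tail supplies $e^{-M_n^2/2}/M_n=o(1)$; a separate ratio estimate then shows the contribution from $|u|>N_n$ is negligible relative to that of the annulus. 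This multi-scale dissection — not just a near/far split around $I_n\times J_n$ — is the missing ingredient. Your use of Lemma \ref{trigfunction} for $\Phi$ and of the quadratic-dominated bound on $\operatorname{Re}\,\Xi$ is otherwise on the right track, but without tracking how the denominator modulus grows away from $(s_0,t_0)$ you cannot reach the stated estimate.
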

\begin{proof}
We prove the estimate (\ref{globalgumbelexact}); the second assertion follows by the same argument.
Since 
\begin{equation*} 
(\theta_n^{-1}\mathbb{T}\times\mathbb{R})\setminus (I_n\times J_n)\\
=A_1\cup A_2\cup A_3,
\end{equation*}
where $A_1=I_n\times(\mathbb{R}\setminus J_n)$, $A_2=((\theta_n^{-1}\mathbb{T})\setminus I_n)\times J_n$ and $A_3= ((\theta_n^{-1}\mathbb{T})\setminus I_n)\times(\mathbb{R}\setminus J_n)$,
we consider first the integral 
\begin{align}\label{tgumbel}
I_{J_n^c}(s):
=&\left|\int_{\mathbb{R}\setminus J_n} \frac{e^{\Xi_{w_+}^{\zeta_2}(t)}}{w_-e^{i\theta_n s}-\tau_n (w_++i\alpha_n t)}\ud t\right|\nonumber\\
 \leq&\int_{\mathbb{R}\setminus J_n} 
\frac{\exp\left\{n\log\left|1+\frac{i\alpha_n t}{w_+}\right|
-\alpha_n^2t^2-\alpha_n b_n t\eta_2\right\}}{|w_-+iw_-\theta_n s-\tau_n (w_++i\alpha_n t)|}\ud t\nonumber\\
 \leq& e^{t_0^2/2}\int_{|u|>M_n}\frac{e^{-u^2/2}}{|w_-+iw_-\theta_n s-\tau_n (w_++i\alpha_n (u+t_0))|}\ud u,
%\\\leq \frac{e^{\frac{1}{2}t_0^2}}{|w_- -\tau_n w_+ +iw_- \theta_n s-i\tau_n \alpha_n t_0|}\mathcal{O}\left(\frac{e^{-M_n^2/2}}{M_n}\right),
\end{align} 
in order to estimate the integral of $F_{w_-,w_+}^{\zeta_1,\zeta_2}$ over $A_1$.
Fix $s\in \mathbb{R}$ and 
%\begin{align}
%I_{J_n^c} \leq&\int_{\mathbb{R}\setminus J_n} 
%\frac{\exp\left\{n\log\left|1+\frac{i\alpha_n t}{w_+}\right|
%-\alpha_n^2t^2-\alpha_n b_n t\eta_2\right\}}{|w_-+iw_-\theta_n s-\tau_n (w_++i\alpha_n t)|}\ud t\nonumber
%\\ \leq& e^{t_0^2/2}\int_{|u|>M_n}\frac{e^{-u^2/2}}{|w_-+iw_-\theta_n s-\tau_n (w_++i\alpha_n (u+t_0))|}\ud u
%\end{align}
suppose first that $\eta_2\neq 0$ and that 
\begin{equation*}
|iw_-\theta_n s-i\tau_n\alpha_n t_0|
\asymp \frac{b_n}{(1-\tau_n)};
\end{equation*} 
this will generically be the case. Note that 
\begin{equation*}
\frac{b_n}{(1-\tau_n)}\gg\tau_nw_+-w_-\asymp \sqrt{\frac{\log \sigma_n}{(1-\tau_n)}}\gg\alpha_n M_n.
\end{equation*} 
We can therefore choose $N_n$ such that 
\begin{equation*}
\alpha_n M_n\ll\alpha_n  N_n 
\ll \left|w_-\theta_n s-\tau_n \alpha_n t_0\right|
\ll\alpha_n N_n^2,
\end{equation*} 
for example 
\begin{equation*}N_n=\frac{\sigma_n}{(\log\sigma_n)^{1/6}}
\end{equation*} 
will do. To see that the contribution to $I_{J_n^c}(s)$ from $|u|>N_n$ is negligible, note that
\begin{align*}
\int_{\pi/\theta_n>|u|>N_n}&\frac{e^{-u^2/2}}
{|w_-+iw_-\theta_n s-\tau_n (w_++i\alpha_n (u+t_0))|}\ud u\nonumber\\
&\times\left(\int_{N_n>|u|>M_n}\frac{e^{-u^2/2}}
{|w_-+iw_-\theta_n s-\tau_n (w_++i\alpha_n (u+t_0))|}
\ud u\right)^{-1}\nonumber\\
 \leq&\frac{\sup_{N_n>|u|>M_n}\left\{|w_-+iw_-\theta_n s
-\tau_n (w_++i\alpha_n (u+t_0))|\right\}}{\tau_nw_+-w_-}\nonumber\\
&\times\frac{\int_{N_n<|u|<L_n}e^{-u^2/2}\ud u}
{\int_{M_n<|u|<N_n}e^{-u^2/2}\ud u}\nonumber\\
\leq & C_1\frac{\alpha_n N_n^2 \sqrt{(1-\tau_n)}}
{\sqrt{\log \sigma_n}}\frac{M_n}{N_n}e^{-N_n^2/3}
 \leq CN_ne^{-N_n^2/3}.
\end{align*}
Equation (\ref{tgumbel}) therefore becomes  
\begin{align*}
I_{J_n^c}(s)% \leq&e^{t_0^2/2}\int_{\pi/\theta_n>|u|>M_n}
%\frac{e^{-u^2/2}}{|w_-+iw_-\theta_n s
%-\tau_n (w_++i\alpha_n (u+t_0))|}\ud u\nonumber\\
\leq & C_1e^{t_0^2/2}\int_{N_n>|u|>M_n}\frac{e^{-u^2/2}}
{|w_-+iw_-\theta_n s-\tau_n (w_++i\alpha_n (u+t_0))|}\ud u\nonumber\\
%&&\times\left(1+CN_ne^{-N_n^2/3}\right)\nonumber\\
 \leq& C\frac{e^{-M_n^2/2}}{M_n}\frac{ e^{t_0^2/2}}
{|w_-+iw_-\theta_n s-\tau_n (w_++i\alpha_n t_0)|}.
\end{align*}
%\frac{e^{-N_n^2/2}/N_n}{e^{-M_n^2/2}\left(\frac{1}{M_n}-\frac{1}{M_n^3}\right)-e^{-N_n^2/2}/N_n}
%\mathcal{O}\left((\log \sigma_n)^{-1/2} N_n^3e^{-N_n^2/2}\right)
The (simpler)  cases when $\eta_2=0$ or 
$|iw_-\theta_n s-i\tau_n\alpha_n t_0|\ll\ b_n(1-\tau_n)^{-1}$ %\mathcal{O}\left(\tau_nw_+-w_-\right)$
 can be handled similarly, and lead to the same estimate.
Essentially the same argument as in the proof of Lemma \ref{localgumbel}
now provides an estimate
\begin{multline}\label{tgumbel3}
\left|\iint_{A_1}
F_{w_,w_+}^{\zeta_1,\zeta_2}(s,t)\ud t\ud s\right|
=\left|\int_{I_n}I_{J_n^c}(s)e^{\Phi_{w_-}^{\zeta_1}(\theta_n s)+i\Psi_{w_-}^{\zeta_1}(\theta_n s)} \ud s\right|\\
\leq C\frac{e^{-M_n^2/2}}{M_n} e^{t_0^2/2}
\int_{I_n}\frac{e^{\Phi_{w_-}^{\zeta_1}(\theta_n s)}}{|w_- -\tau_n w_+ +iw_- \theta_n s  -i\tau_n \alpha_n t_0|} \ud s\\
\leq C\frac{e^{-M_n^2/2}}{M_n}\frac{e^{t_0^2+s_0^2/2}}{|w_- -\tau_n w_+ +iw_- \theta_n s_0  -i\tau_n \alpha_n t_0|}.
\end{multline}

Now we turn to the integral over $A_2$.
%recall the expansion (\ref{taylorphi}) of $\Phi_{w_-}^{\zeta_1}$. %$\Phi_{w_-}^{\zeta_1}(\theta_n s)=\frac{1}{2}s_0^2-\frac{1}{2}(s-s_0)^2+\frac{r}{6}(\theta_ns)^3+\mathcal{O}\left(n(\theta_ns)^4\right)+\xi_1\mathcal{O}\left( \frac{s^2}{\sqrt{(1-\tau_n)n\log \sigma_n}}\right)$. 
Consider, analogously with the previous estimate, the case that  
$\eta_1\neq 0$ and 
\begin{equation*}
|iw_-\theta_n s_0-i\tau_n\alpha_n t|\gg\tau_nw_+-w_-.
\end{equation*} 
Choose $N_n$ as before and %, if $N_n\ll(n(1-\tau_n))^{1/4}$, 
let $L_n=(n(1-\tau_n))^{1/5}$, so that 
$L_n \gg N_n$, but $\theta_n L_n\ll 1$. % and $L_n^2/\sqrt{(1-\tau_n)n\log \sigma_n}\ll 1$. 
Then for $t$ fixed, we proceed to show that the main contribution to 
\begin{align}\label{sgumbel} 
I_{I_n^c}(t):=&\left|\int_{(\theta_n^{-1}\mathbb{T})\setminus I_n}\frac{e^{\Phi_{w_-}^{\zeta_1}(\theta_n s)+i\Psi_{w_-}^{\zeta_1}(\theta_n s)}}{w_-e^{i\theta_n s}-\tau_n (w_++i\alpha_n t)}\ud s\right|
%¤ \leq \int_{M_n<|u|<\frac{\pi}{\theta_n}}
%\frac{e^{\Phi_{w_-}^{\zeta_1}(\theta_n (u+s_0))}}
%{|w_-e^{i\theta_n (u+s_0)}-\tau_n (w_++i\alpha_n t)|}\ud u\nonumber\\
% \leq&\int_{L_n<|u|<\frac{\pi}{\theta_n}}
%\frac{\exp\left\{\Phi_{w_-}^{\zeta_1}(\theta_n (u+s_0))\right\}}
%{|w_-e^{i\theta_n (u+s_0)}-\tau_n (w_++i\alpha_n t)|}\ud u\nonumber\\
%&+&e^{\frac{1}{2}s_0^2}\int_{M_n<|u|<L_n}
%\frac{e^{-\frac{1}{2}u^2}}
%{|w_-e^{i\theta_n (u+s_0)}-\tau_n (w_++i\alpha_n t)|}
%\ud u\left(1+o(1)\right).
\end{align}
comes from the intervals $M_n \leq |u|\leq N_n$.
First, by Lemma \ref{trigfunction}, 
\begin{align*}
\int_{L_n<|u|<\frac{\pi}{\theta_n}}&
\frac{e^{\Phi_{w_-}^{\zeta_1}(\theta_n (u+s_0))}}
{|w_-e^{i\theta_n (u+s_0)}-\tau_n (w_++i\alpha_n t)|}\ud u\nonumber\\
&\times\left(\int_{M_n<|u|<L_n}
\frac{e^{\Phi_{w_-}^{\zeta_1}(\theta_n (u+s_0))}}
{|w_-e^{i\theta_n (u+s_0)}-\tau_n (w_++i\alpha_n t)|}
\ud u\right)^{-1}\nonumber\\
 \leq&\frac{\sup_{L_n>|u|>M_n}\{|w_-+iw_-\theta_n (u+s_0)
-\tau_n (w_++i\alpha_n t)|\} e^{\Phi_{w_-}^{\zeta_1}(L_n)}}
{\int_{M_n<|u|<L_n} e^{-u^2/2}\ud u} \nonumber\\
& \times \int_{L_n<|u|<\frac{\pi}{\theta_n}}
\frac{\ud u}{\tau_n w_+-w_--w_-\cos(\theta_n(u+s_0))}\nonumber\\
 \leq&C_1\frac{ w_- L_n e^{\frac{1}{2}s_0^2
-\frac{1}{2}L_n^2}}{\sqrt{(\tau_n^2w_+^2-w_-^2)}
} \frac{M_n}{e^{-M_n^2/2}} 
 \leq C(n(1-\tau_n)\log \sigma_n)^{1/4}L_ne^{-L_n^2/3}\ll 1
\end{align*}
%e^{-M_n^2/2}\left(\frac{1}{M_n}-\frac{1}{M_n^3}\right)-e^{-L_n^2/2}/L_n
But for $|s|<L_n$, we also have an estimate
\begin{align*}
\int_{N_n<|u|<L_n}&\frac{
e^{\Phi_{w_-}^{\zeta_1}(\theta_n (u+s_0))}}
{|w_-e^{i\theta_n (u+s_0)}-\tau_n (w_++i\alpha_n t)|}\ud u\nonumber\\
&\times\left(\int_{M_n<|u|<N_n}
\frac{e^{\Phi_{w_-}^{\zeta_1}(\theta_n (u+s_0))}}
{|w_-e^{i\theta_n (u+s_0)}-\tau_n (w_++i\alpha_n t)|}\ud u\right)^{-1}\nonumber\\
 \leq&\frac{\sup_{M_n<|u|<N_n}\left
\{|w_-+iw_-\theta_n (u+s_0)-\tau_n (w_++i\alpha_n t)|\right\}}
{\tau_nw_+-w_-}\nonumber\\
&\times\frac{\int_{N_n<|u|<L_n}e^{-u^2/2}\ud u}
{\int_{M_n<|u|<N_n}e^{-u^2/2}\ud u}\nonumber\\
\leq & C_1\frac{\alpha_n N_n^2 \sqrt{(1-\tau_n)}}{\sqrt{\log \sigma_n}}\frac{M_n}{N_n}e^{-N_n^2/3}
 \leq CN_ne^{-N_n^2/3}.
%&\times\frac{e^{-N_n^2/2}/N_n}{e^{-M_n^2/2}\left(\frac{1}{M_n}-\frac{1}{M_n^3}\right)-e^{-N_n^2/2}/N_n}\left(1+o(1)\right)\nonumber\
\end{align*}
Consequently, (\ref{sgumbel}) becomes
\begin{align*}
I_{I_n^c}(t) \leq &\int_{M_n<|u|<\frac{\pi}{\theta_n}}
\frac{e^{\Phi_{w_-}^{\zeta_1}(\theta_n (u+s_0))}}
{|w_-e^{i\theta_n (u+s_0)}-\tau_n (w_++i\alpha_n t)|}\ud u\nonumber\\
 \leq&C_1\int_{M_n<|u|<N_n}\frac{e^{\Phi_{w_-}^{\zeta_1}(\theta_n s)
}}{|w_-e^{i\theta_n s}
-\tau_n (w_++i\alpha_n t)|}\ud s\nonumber\\
\leq &C \frac{e^{-M_n^2/2}}{M_n}\frac{e^{\frac{1}{2}s_0^2}}
{|w_- -\tau_n w_+ +iw_- \theta_n s_0-i\tau_n \alpha_n t|},
\end{align*} 
so 
\begin{multline}\label{sgumbel3}
\left|\iint_{A_2}
F_{w_,w_+}^{\zeta_1,\zeta_2}(s,t)\ud t\ud s\right|\\
\leq C_1\frac{e^{-M_n^2/2}}{M_n} e^{s_0^2/2}
\int_{J_n}\frac{e^{\Xi_{w_+}^{\zeta_2}(t)}}{|w_- -\tau_n w_+ +iw_- \theta_n s_0  -i\tau_n \alpha_n t|} \ud t \\
\leq C\frac{e^{-M_n^2/2}}{M_n}\frac{e^{t_0^2+s_0^2/2}}{|w_- -\tau_n w_+ +iw_- \theta_n s_0  -i\tau_n \alpha_n t_0|}.
\end{multline}
Clearly,
\begin{multline}\label{sgumbel4}
\left|\iint_{A_3}
F_{w_,w_+}^{\zeta_1,\zeta_2}(s,t)\ud t\ud s\right|
= o(1)\frac{e^{-M_n^2/2}}{M_n}\frac{e^{t_0^2+s_0^2/2}}{|w_- -\tau_n w_+ +iw_- \theta_n s_0  -i\tau_n \alpha_n t_0|},
\end{multline}
which together with (\ref{tgumbel3}) and (\ref{sgumbel3}) 
completes the proof.
\end{proof}

We close this section with an explicit evaluation of the main 
contribution to the approximating integral from Lemma \ref{localgumbel}.
\begin{lemma}\label{integralform}
%\emph{(Main contribution when $\sigma_n \to \infty$.)} 
%Suppose $0< \delta_n \ll\sqrt{n}(1-\tau_n)^2$ and let 
%$\alpha_n=\left(f_n''(w_+)\right)^{-1/2}$ and 
%$\theta_n=(w_-(c_n-4w_-))^{-1/2}$. 
For any fixed $(\zeta_1,\zeta_2)\in \mathbb{R}^4$, 
\begin{multline*}%\label{integralcalculation}
\iint_{\mathbb{R}^2}\frac{\exp\left\{-\frac{1}{2}t^2-\frac{1}{2}s^2
-i\alpha_n t(a_n \xi_2-ib_n\eta_2)+iw_- \theta_n s(a_n \xi_1+ib_n\eta_1)\right\}}
{w_- -\tau_n w_+ +iw_- \theta_n s-i\tau_n \alpha_n t}\ud t\ud s\\
=-\frac{2\pi\exp\left\{-\frac{1}{2}\alpha_n^2(a_n\xi_2-ib_n\eta_2)^2
-\frac{1}{2}\theta_n^2w_-^2(a_n\xi_1+i b_n\eta_1)^2\right\}}{\tau_nw_+-w_-
+\tau\alpha_n^2(a_n\xi_2-i b_n\eta_2)+\theta_n^2w_-^2(a_n\xi_1+ib_n\eta_1)}(1+o(1)).
%\\\times\left(1+\mathcal{O}\left(\frac{(1-\tau_n)^{1/4}}{\sqrt{\delta_n}}\right)+\mathcal{O}\left(e^{-\sqrt{\log\sigma_n}}\right)\right)
\end{multline*}
\end{lemma}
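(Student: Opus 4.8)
The plan is to evaluate the Gaussian-type double integral
\[
I:=\iint_{\mathbb{R}^2}\frac{\exp\{-\tfrac12 t^2-\tfrac12 s^2-i\alpha_n t(a_n\xi_2-ib_n\eta_2)+iw_-\theta_n s(a_n\xi_1+ib_n\eta_1)\}}{w_--\tau_n w_+ +iw_-\theta_n s-i\tau_n\alpha_n t}\,\ud t\,\ud s
\]
by first separating the denominator, which is the only non-Gaussian factor, using the integral representation of $1/z$. Write $A=w_--\tau_n w_+$ (which is negative since $\delta_n>0$ forces $\tau_n w_+>w_-$), $B=w_-\theta_n$, $D=\tau_n\alpha_n$, so the denominator is $A+iBs-iDt$. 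Since $A<0$, we have $\mathrm{Re}(-A+ \cdots)>0$ after appropriate sign bookkeeping, and we may write
\[
\frac{1}{A+iBs-iDt}=-\int_0^\infty e^{p(A+iBs-iDt)}\,\ud p
\]
provided $A<0$ (so that $e^{pA}$ decays); this is the same device used in the proof of Theorem \ref{interpolatingkernel}. Substituting and applying Fubini, the $s$- and $t$-integrals decouple into standard Gaussian integrals $\int_{\mathbb{R}} e^{-\frac12 s^2 + (\text{linear in }s)}\ud s = \sqrt{2\pi}\,e^{\frac12(\text{linear coeff})^2}$, leaving a single integral in $p$.

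Carrying this out, the $s$-integral contributes a factor $\sqrt{2\pi}\exp\{\tfrac12(iw_-\theta_n(a_n\xi_1+ib_n\eta_1)+ipB)^2\}$ and the $t$-integral contributes $\sqrt{2\pi}\exp\{\tfrac12(-i\alpha_n(a_n\xi_2-ib_n\eta_2)-ipD)^2\}$. Expanding the squares, the terms quadratic in $p$ combine to $-\tfrac12 p^2(B^2+D^2)$, the terms linear in $p$ combine to $-p\big(B\,w_-\theta_n(a_n\xi_1+ib_n\eta_1)+D\,\alpha_n(a_n\xi_2-ib_n\eta_2)\big)=-p\big(\theta_n^2w_-^2(a_n\xi_1+ib_n\eta_1)+\tau_n\alpha_n^2(a_n\xi_2-ib_n\eta_2)\big)$, and the $p$-independent terms give exactly the prefactor $\exp\{-\tfrac12\alpha_n^2(a_n\xi_2-ib_n\eta_2)^2-\tfrac12\theta_n^2w_-^2(a_n\xi_1+ib_n\eta_1)^2\}$ appearing in the claim. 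Thus
\[
I=-2\pi\,e^{-\frac12\alpha_n^2(a_n\xi_2-ib_n\eta_2)^2-\frac12\theta_n^2w_-^2(a_n\xi_1+ib_n\eta_1)^2}\int_0^\infty e^{-\frac12 p^2(B^2+D^2)-p\,W}\ud p,
\]
where $W:=\theta_n^2w_-^2(a_n\xi_1+ib_n\eta_1)+\tau_n\alpha_n^2(a_n\xi_2-ib_n\eta_2)$, and one identifies $A=-(\tau_n w_+-w_-)$ so that the claimed denominator $\tau_n w_+-w_-+W$ equals $-A+W$.

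The final step is to show $\int_0^\infty e^{-\frac12 p^2(B^2+D^2)-pW}\ud p=\tfrac{1}{-A+W}(1+o(1))$. This is precisely the estimate (\ref{PI}) with $\alpha_\sigma\leftrightarrow\tfrac12(B^2+D^2)$ and $\beta_\sigma+i\gamma_\sigma\leftrightarrow -A+W$; one needs $\sqrt{\alpha}/\beta\to 0$ and $\beta/\sqrt{\alpha}\to\infty$. Here $\beta=\mathrm{Re}(-A+W)\asymp\tau_n w_+-w_-\asymp\sqrt{\log\sigma_n/(1-\tau_n)}$ (using (\ref{denominator}) and $\delta_n\asymp n^{-2/3}\sigma_n\sqrt{\log\sigma_n}$, noting the real part of $W$ is of lower order), while $\sqrt{\alpha}\asymp\sqrt{B^2+D^2}\asymp\alpha_n\asymp(1-\tau_n)^{-1/2}$, so $\beta/\sqrt{\alpha}\asymp\sqrt{\log\sigma_n}\to\infty$; hence both error terms in (\ref{PI}) are $o(1)$ relative to $1/(-A+W)$. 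The main obstacle is the bookkeeping: one must verify that $\mathrm{Re}(-A+W)>0$ (so that the representation of $1/z$ and the Gaussian integrals are valid and (\ref{PI}) applies) and that the $\xi,\eta$-dependent corrections to $\beta$ are genuinely subordinate to $\tau_n w_+-w_-$ uniformly for fixed $(\zeta_1,\zeta_2)$ — this follows from $a_n\asymp n^{-2/3}\sigma_n/\sqrt{\log\sigma_n}$, $b_n\asymp n^{-2/3}\sigma_n^{5/2}/(\log\sigma_n)^{1/4}$, $\theta_n^2w_-^2\asymp\tau_n/(2(1-\tau_n))$, and $\alpha_n^2\asymp 1/(2(1-\tau_n))$, which make $\mathrm{Re}(W)=\mathcal{O}(a_n/(1-\tau_n))=\mathcal{O}(n^{-2/3}\sigma_n/((1-\tau_n)\sqrt{\log\sigma_n}))=o(\tau_n w_+-w_-)$.
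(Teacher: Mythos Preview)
Your approach is essentially identical to the paper's: write the denominator as $-\int_0^\infty e^{p(A+iBs-iDt)}\ud p$, apply Fubini to reduce to two Gaussian integrals in $s$ and $t$, and then estimate the remaining one-dimensional $p$-integral via the integration-by-parts bound (\ref{PI}), with $\beta/\sqrt{\alpha}\asymp\sqrt{\log\sigma_n}\to\infty$. The only slip is a bookkeeping omission: in your displayed $p$-integral the linear term reads $-pW$, but the factor $e^{pA}$ coming from the representation of $1/z$ must also be carried along, so the exponent is $-\tfrac12 p^2(B^2+D^2)-p(-A+W)$; this is exactly what the paper writes (with $B_n+iC_n=-A+W$ and $A_n=B^2+D^2$), and you evidently intend it too, since you then invoke (\ref{PI}) with $\beta_\sigma+i\gamma_\sigma\leftrightarrow -A+W$.
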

\begin{proof} Since $\delta_n>0$, we have $\tau_nw_+-w_->0$ and 
the denominator can be written
 \begin{multline*}
\frac{1}{w_- -\tau_n w_+ +iw_- \theta_n s-i\tau_n \alpha_n t}\\
=-\int_0^{\infty}\exp\{-r(\tau_nw_+-w_--iw_- \theta_n s+i\tau_n \alpha_n t\}\ud r,  
\end{multline*}
so, using Fubini's Theorem, 
\begin{multline}\label{mainevaluated}
  \iint_{\mathbb{R}^2}\frac{\exp\left\{-\frac{1}{2}t^2-\frac{1}{2}s^2
-i\alpha_n t(a_n \xi_2-ib_n\eta_2)+iw_- \theta_n s(a_n \xi_1+ib_n\eta_1)\right\}}
{w_- -\tau_n w_+ +iw_- \theta_n s-i\tau_n \alpha_n t}\ud t\ud s \\
%&=&\int_0^{\infty}\iint_{\mathbb{R}^2}\exp\left\{-\frac{1}{2}t^2-\frac{1}{2}s^2
%-i\alpha_nt(a_n \xi_2-ib_n\eta_2+\tau_n r)\right.\nonumber\\
%&\left.+iw_- \theta_n s(a_n \xi_1+ib_n\eta_1+r)-r(\tau_nw_+-w_-)\right\} \ud t\ud s  \ud r\nonumber \\
%&=& 2\pi \int_0^{\infty}\exp\left\{-\frac{\alpha_n^2}{2}(-a_n \xi_2+ib_n\eta_2-\tau_n r)^2\right.\nonumber\\
%&\left. -\frac{w_-^2\theta_n^2}{2}(a_n \xi_1+ib_n\eta_1+r)^2-r(\tau_nw_+-w_-)\right\}\ud r\nonumber\\
= -2\pi \exp\left\{-\frac{\alpha_n^2}{2}(-a_n \xi_2+ib_n\eta_2)^2-\frac{w_-^2\theta_n^2}{2}(a_n \xi_1+ib_n\eta_2)^2\right\}\\
\times\int_0^{\infty}\exp\left\{-A_1 r^2-(B_n+iC_n) r\right\}\ud r, 
\end{multline}
where 
\begin{align*}A_n&=\tau_n^2\alpha_n^2+w_-^2\theta_n^2=\frac{\tau_n(1+\tau_n)}{2(1-\tau_n)}(1+o(1)),\nonumber\\
B_n&=\tau_nw_+-w_-+\tau_n\alpha_n^2a_n \xi_2+w_-^2\theta_n^2a_n \xi_1
=\frac{\tau_n\delta_n}{(1-\tau_n)}(1+o(1)), \textrm{ and }\nonumber\\
C_n&=-\tau_n\alpha_n^2b_n\eta_2+w_-^2\theta_n^2b_n\eta_1.
\end{align*}
%Now, for $\xi_j$ in a compact set, $\operatorname{Re}(B_n)\approx \tau_nw_+-w_-$ and $\sqrt{A_n}\ll\operatorname{Re}(B_n)$. We would like to have an estimate 
Putting $\epsilon=B_n^{1/2}A_n^{-3/4}$ and integrating by parts gives an estimate
\begin{align}\left|\int_0^{\infty}\right.&\left.\exp\left\{-A_n r^2-(B_n+iC_n) r\right\}\ud r
-\frac{1}{B_n+iC_n}\right|\nonumber\\
 &=\left|\frac{A_n}{B_n+iC_n}\int_0^{\infty}2r\exp\left\{-A_n r^2-(B_n+iC_n) r\right\}\ud r\right|\nonumber\\
& \leq\left|\frac{A_n}{B_n+iC_n}\right|\left(\int_0^{\epsilon}2re^{-B_n r}\ud r
+\int_{\epsilon}^{\infty}2re^{-A_n r^2}\ud r\right)\nonumber\\
& \leq\frac{1}{|B_n+iC_n|}\left(\frac{2\epsilon A_n}{B_n}
+e^{-A_n\epsilon^2}\right)\nonumber\\
% \leq&C_1\frac{A_n^{1/4}B_n^{-1/2}}{|B_n+iC_n|}\nonumber\\
& \leq C\frac{1}{|B_n+iC_n|}\left(\frac{(1-\tau_n)^{1/4}}{\sqrt{\delta_n}}+e^{-\sqrt{\log\sigma_n}}\right),
\end{align}
which, inserted into (\ref{mainevaluated}), gives the conclusion.
\end{proof}

\subsection{Proof of Theorem \ref{main}, part (i)}\label{proofgumbel}
To be able to apply Lemma \ref{convergenceoflastparticle} to 
$(\tilde{Z}_n^{\tau_n})_{n=1}^{\infty}$, we first calculate the point-wise 
limit of (a kernel equivalent to) $\tilde{K}_n^{\tau_n}$,
using the estimates of Section \ref{estimatesgumbel}, 
and then prove that, for 
any real  $\xi_0$,
there is an integrable function on $(\xi_0,\infty)\times \mathbb{R}$ which
dominates the functions 
$\{\tilde{K}_n^{\tau_n}(\zeta,\zeta)\}_{n=1}^{\infty}$.
\begin{proof}[Proof of Theorem \ref{main} (i)]
 Suppose $\sigma_n \to \infty$ and choose $\alpha_n$ and $\theta_n$ 
as in the previous section, that is, 
%defined by Equations (\ref{Alpha}) and (\ref{Theta}).
\begin{equation*}
\alpha_n=\sqrt{\frac{w_+}{2(w_+-w_-)}}
\end{equation*}
and
\begin{equation*}
\theta_n=\frac{1}{\sqrt{2w_-(w_+-w_-)}}.
\end{equation*}
 The choice of scaling parameters is 
given by requiring that the rescaled kernel $K_n^{\tau_n}$ 
have a non-trivial 
finite limit. To be able to control the error terms we assume 
from the outset 
that the orders of magnitude are correct, namely that the conditions
\begin{equation}\label{orderofdelta}
\delta_n\asymp\sqrt{(1-\tau_n)\log\sigma_n},
\end{equation}
\begin{equation}\label{orderofa}
a_n\asymp\sqrt{\frac{(1-\tau_n)}{\log\sigma_n}}
\end{equation}
and
%a_n=\sqrt{\frac{(1+\tau_n)}{\tau_n}}\sqrt{\frac{(1-\tau_n)}{6\log\sigma_n}},\end{equation}
\begin{equation}\label{orderofb}
b_n\asymp\left(\frac{n(1-\tau_n)^5}{\log\sigma_n}\right)^{1/4}.
\end{equation}
are satisfied.
%\left(\frac{8}{\tau_n^2(1+\tau_n)}\right)^{1/4}\left(\frac{n(1-\tau_n)^5}{6\log\sigma_n}\right)^{1/4},\end{equation}
 Choose $M_n$ such that $1\ll M_n\ll\sqrt{\log\sigma_n}$.
%Let $A\subset\mathbb{R}^4$ be a compact set. 
It then follows from the estimates of Lemmas \ref{localgumbel}, \ref{globalgumbel} 
and \ref{integralform} that for any fixed $(\zeta_1,\zeta_2)\in \mathbb{R}^4$,
\begin{multline*}
e^{ -i\phi_n(\eta_2^3-\eta_1^3)}\iint_{(\theta_n^{-1}\mathbb{T})\times\mathbb{R}}F_{w_-,w_+}^{\zeta_1,\zeta_2}(s,t)\ud t\ud s\\
=-\frac{2\pi \exp\left\{-\frac{1}{2}\alpha_n^2(a_n\xi_2-ib_n\eta_2)^2
-\frac{1}{2}\theta_n^2w_-^2(a_n\xi_1+i b_n\eta_1)^2
\right\}}{\tau_nw_+-w_-+\tau\alpha_n^2(a_n\xi_2-i b_n\eta_2)
+\theta_n^2w_-^2(a_n\xi_1+ib_n\eta_1)}(1+o(1)).
\end{multline*}
% where
%\begin{align}
%|R_n| =&\mathcal{O}\left(\frac{b_n^4}{n(1-\tau_n)^4}
%+\frac{b_n^2M_n}{\sqrt{n}(1-\tau_n)^{5/2}}
%+\frac{b_n^2}{\sqrt{n}(1-\tau_n)\delta_n}\right.\nonumber\\
%&\left.+\frac{e^{-M_n^2/2}}{M_n}+\frac{(1-\tau_n)^{1/4}}{\sqrt{\delta_n}}
%+\frac{b_n^2a_n}{\sqrt{n}(1-\tau_n)^2}\right)\nonumber\\
% =&\mathcal{O}\left(\frac{(1-\tau_n)}{\log \sigma_n}+\frac{M_n}{\sqrt{\log \sigma_n}}
%+\frac{(1-\tau_n)}{\sqrt{\log \sigma_n}}\right.\nonumber\\
%&\left.+\frac{e^{-M_n^2/2}}{M_n}+\frac{1}{(\log \sigma_n)^{1/4}}
%+\frac{(1-\tau_n)}{(\log \sigma_n)^{3/4}}\right)\nonumber\\
% =&o(1)
%\end{align}
%\begin{multline}K_n^{*}(\zeta_1,\zeta_2)=\frac{a_nb_n\tau_n^{n+1}}{4\pi^{5/2}\sqrt{1-\tau_n^2}}\exp \left\{\frac{(c_n+a_n\xi_2)^2-\tau_n(c_n+a_n\xi_1)^2}{4(1+\tau_n)}-\frac{b_n^2(\tau_n\eta_1^2+\eta_2^2)}{4(1-\tau_n)}-\frac{ib_n\eta_2(c_n+a_n\xi_2)}{2}\right\}\times \\w_- \theta_n\alpha_n\exp\left\{f_n(w_+)-f_n(w_-)-w_+(a_n\xi_2-ib_n\eta_2)+w_-(a_n\xi_1+b_n\eta_1)+i\phi_n(\eta_2^3-\eta_1^3) \right\}\times\\
%\frac{2\pi\exp\{-\frac{1}{2}\alpha_n^2(a_n\xi_2-ib_n\eta_2)^2-\frac{1}{2}\theta_n^2w_-^2(a_n\xi_1+i b_n\eta_1)^2\}}{\tau_nw_+-w_-+\tau\alpha_n^2(a_n\xi_2-i b_n\eta_2)+\theta_n^2w_-^2(a_n\xi_1+ib_n\eta_1)}\\
Since $\tau_n w_+-w_->0$ whenever $\delta_n>0$, we may choose $r_1=w_-$
and $r_2=w_+$ in the representation (\ref{rescaledkernel}) derived for
the correlation kernel of $\tilde{Z}_n^{\tau_n}$. Recalling (\ref{alphatheta1}) and (\ref{alphatheta2}) %andthat 
%$\alpha_n^2=1/2+\theta_n^2w_-^2$ and that
%$\tau_n\alpha_n^2-\theta_n^2w_-^2=\mathcal{O}(x_n)$ 
this yields, after some simplification,
\begin{multline}\label{ksimplified}
\tilde{K}_n^{\tau_n}(\zeta_1,\zeta_2)\\
%&=&\frac{a_nb_n\tau_nw_- \theta_n\alpha_n}{2\pi^{3/2}
%\sqrt{1-\tau_n^2}\left(\tau_nw_+-w_-+\tau\alpha_n^2(a_n\xi_2-i b_n\eta_2)
%+\theta_n^2w_-^2(a_n\xi_1+ib_n\eta_1)\right)}\nonumber\\
%&\times\exp \left\{n\log{\tau_n}+\frac{(c_n+a_n\xi_2)^2
%-\tau_n(c_n+a_n\xi_1)^2}{4(1+\tau_n)}
%-\frac{b_n^2(\tau_n\eta_1^2+\eta_2^2)}{4(1-\tau_n)}\right.\nonumber\\
%&\left.-\frac{ib_n\eta_2(c_n+a_n\xi_2)}{2}+f_n(w_+)-f_n(w_-)
%+a_n(w_-\xi_1-w_+\xi_2)+ib_n(w_-\eta_1+w_+\eta_2)\right.\nonumber\\
%&\left.+i\phi_n(\eta_2^3-\eta_1^3)
%-\frac{1}{2}\alpha_n^2(a_n\xi_2-ib_n\eta_2)^2
%-\frac{1}{2}\theta_n^2w_-^2(a_n\xi_1+i b_n\eta_1)^2\right\}\nonumber\\
%&=&\frac{a_nb_n\tau_nw_- \theta_n\alpha_n}{2\pi^{3/2}\sqrt{1-\tau_n^2}
%\left(\tau_nw_+-w_-+\tau\alpha_n^2(a_n\xi_2-i b_n\eta_2)
%+\theta_n^2w_-^2(a_n\xi_1+ib_n\eta_1)\right)}\nonumber\\
%&\times \exp \left\{n\log{\tau_n}+\frac{c_n^2(1-\tau_n)}{4(1+\tau_n)}
%+f_n(w_+)-f_n(w_-)\right.\nonumber \\
%&\left. -a_n\xi_1\left(\frac{\tau_n c_n}{2(1+\tau_n)}-w_-\right)-a_n\xi_2\left(w_+-\frac{c_n }{2(1+\tau_n)}\right)\right. \nonumber\\
%&\left.-b_n^2\eta_1^2\left(\frac{\tau_n}{4(1-\tau_n)}-\frac{1}{2}\theta_n^2w_-^2\right)-b_n^2\eta_2^2\left(\frac{\tau_n}{4(1-\tau_n)} -\frac{1}{2}\alpha_n^2\right)\right.\nonumber\\
%&\left.+\frac{a_n^2}{4(1+\tau_n)}(\xi_2^2-\tau_n\xi_1^2)+ib_n(w_-\eta_1-(c_n/2-w_+)\eta_2)\right.\nonumber\\
%&\left. +ia_nb_n\left(\left(\alpha_n^2-\frac{1}{2}\right)\xi_2\eta_2-\theta_n^2w_-^2\xi_1\eta_1\right)\right.\nonumber\\
%&\left. -\frac{1}{2}\alpha_n^2a_n^2\xi_2^2-\frac{1}{2}\theta_n^2 w_-^2a_n^2\xi_1^2+i\phi_n(\eta_2^3-\eta_1^3)\right\}\nonumber\\
=\frac{a_nb_n\tau_nw_- \theta_n\alpha_n
\exp \left\{n\log{\tau_n}
+\frac{c_n^2(1-\tau_n)}{4(1+\tau_n)}
+f_n(w_+)-f_n(w_-)
\right\}}
{2\pi^{3/2}\sqrt{(1-\tau_n^2)}
\left(\tau_nw_+-w_-+\tau\alpha_n^2(a_n\xi_2-i b_n\eta_2)
+\theta_n^2w_-^2(a_n\xi_1+ib_n\eta_1)\right)}\\
\times\exp\left\{ -\frac{\tau_n a_n\delta_n}{(1-\tau_n^2)}(\xi_1+\xi_2)(1+o(1))
-\frac{\tau_n^{3/2}b_n^2\delta_n}{2^{3/2}
\sqrt{n}(1-\tau_n)^3}(\eta_1^2+\eta_2^2)(1+o(1))\right\}\\
%\times \exp\left\{ ib_nw_-(\eta_1-\eta_2)+ia_nb_n\theta_n^2w_-^2\left(\xi_2\eta_2-\xi_1\eta_1\right)+i\phi_n(\eta_2^3-\eta_1^3)\right\}\\
\times \frac{F_n(\zeta_2)}{F_n(\zeta_1)} \exp\left\{a_n^2\left(\frac{1}{4(1+\tau_n)}-\frac{\alpha_n^2}{2}\right)(\xi_2^2+\xi_1^2)\right\}(1+o(1)),
\end{multline}
where  
$F_n(\zeta)=\exp\{-ib_nw_-\eta+ib_n\theta_n^2w_-^2\xi\eta+i\phi_n\eta^3\}$ 
and the small $o$ terms in the exponent are both 
$\mathcal{O}\left(\delta_nn^{-1/2}(1-\tau_n)^{-1}\right)$.
The correlation kernel of a determinantal point process is not uniquely defined; 
clearly all correlation functions remain unchanged if 
$\tilde{K}_n^{\tau_n}(\zeta_1,\zeta_2)$ is replaced by 
\begin{equation}\label{kprime}
K_n^{'}(\zeta_1,\zeta_2)
=\frac{F_n(\zeta_1)}{F_n(\zeta_2)}\tilde{K}_n^{\tau_n}(\zeta_1,\zeta_2).
\end{equation} 
 In order for the exponent in (\ref{ksimplified}) to have a non-trivial 
finite limit depending on 
the variables $\zeta_j$, we choose $a_n$ and $b_n$ such that
\begin{equation}\label{choicea}
\lim_{n\to \infty}\frac{\tau_n a_n\delta_n}{(1-\tau_n^2)}=
\lim_{n\to \infty}\frac{\tau_n^{3/2}b_n^2\delta_n}{2^{3/2}\sqrt{n}(1-\tau_n)^3}= \frac{1}{2}
\end{equation}
as $n\to \infty$. 
%By definition, 
%\begin{equation}
%f_n(w_+)-f_n(w_-)=n\log\left(\frac{w_+}{w_-}\right)+w_+^2-w_-^2-c_n(w_+-w_-).
%\end{equation}
 To evaluate the constant exponential factor in (\ref{ksimplified}) 
the precise asymptotics (\ref{taylorroot}) are needed.   
 Expansions of the logarithmic terms, and a considerable amount of 
subsequent algebraic manipulation, give
   %$x_n=\delta_n n^{-1/2}(1-\tau_n)^{-2}$.%:\begin{multline}\label{taylorroot}\sqrt{1-\frac{8n}{c_n^2}}=\frac{1-\tau_n}{1+\tau_n}+\frac{\delta_n}{\sqrt{n}}\sqrt{\frac{\tau_n}{2}}\left(\frac{1}{1-\tau_n}-\frac{1-\tau_n}{(1+\tau_n)^2}\right)+\frac{\tau_n\delta_n^2}{2n}\left(\frac{-2(1-\tau_n)^2-(1+\tau_n)^2}{4(1+\tau_n)(1-\tau_n)^3}+\frac{1-\tau_n}{(1+\tau_n)^3}\right)+\mathcal{O}\left(\frac{\delta_n^3}{(1-\tau_n)^5n^{3/2}}\right)=
%\\ \frac{1-\tau_n}{1+\tau_n}+\delta_n\sqrt{\frac{\tau_n}{2n}}\left(\frac{4\tau_n}{(1-\tau_n)(1+\tau_n)^2}\right)+\frac{\tau_n\delta_n^2}{2n}\left(\frac{1-20\tau_n+22\tau_n^2-20\tau_n^3+\tau_n^4}{4(1-\tau_n)^3(1+\tau_n)^3}\right)+\mathcal{O}\left(\frac{\delta_n}{(1-\tau_n)^2\sqrt{n}}\right)=\\
%\frac{1-\tau_n}{1+\tau_n}\left(1+\frac{\delta_n}{\sqrt{n}(1-\tau_n)^2}\frac{4\tau_n^{3/2}}{\sqrt{2}(1+\tau_n)}+\left(\frac{\delta_n}{\sqrt{n}(1-\tau_n)^2}\right)^2\left(\frac{\tau_n(1-20\tau_n+22\tau_n^2-20\tau_n^3+\tau_n^4)}{8(1+\tau_n)^2}\right)+\mathcal{O}\left(\left(\frac{\delta_n}{\sqrt{n}(1-\tau_n)^2}\right)^3\right)\right)
%\end{multline}
\begin{align*}
n\log{\tau_n}&
+\frac{c_n^2(1-\tau_n)}{4(1+\tau_n)}
+f_n(w_+)-f_n(w_-)\nonumber\\
%&=&n\log\left(\tau_n+\frac{2\tau_n\sqrt{1-\frac{8n}{c_n^2}}}
%{1-\sqrt{1-\frac{8n}{c_n^2}}}\right).\nonumber\\
%&=&n\log\left\{-\tau_n+\left(\frac{(1+\tau_n)^2}{2}
%+\frac{\sqrt{\tau_n}(1+\tau_n)\delta_n}{\sqrt{2n}}
%+\frac{\tau_n\delta_n^2}{4n}\right)\frac{(1-\tau_n)}{(1+\tau_n)}\times\right.\nonumber\\
%&\left.\left(1+\frac{4\tau_n^{3/2}}{\sqrt{2}(1+\tau_n)}x_n
%+\frac{\tau_n(-3\tau_n+2\tau_n^2-3\tau_n^3)}{(1+\tau_n)^2}x_n^2
%+\mathcal{O}\left(x_n^3\right)\right)+1\right\}\nonumber\\
%&=&n\log\left(1+\frac{\sqrt{2\tau_n}\delta_n}{\sqrt{n}(1-\tau_n)}
%+\left(\frac{\delta_n}{\sqrt{n}}\right)^2\left(\frac{\tau_n(1-2\tau_n-6\tau_n^2-2\tau_n^3
%+\tau_n^4}{4(1+\tau_n)(1-\tau_n)^3}\right)\right.\nonumber\\
%&\left.+\mathcal{O}\left(\frac{\delta_n^3}{n^{3/2}(1-\tau_n)^5}\right)\right)\nonumber\\
% =\frac{c_n^2(1-\tau_n)}{4(1+\tau_n)}+w_+^2-w_-^2-c_n(w_+-w_-)+ n\log\left(\frac{\tau_nw_+}{w_-}\right)\\
=&\frac{c_n^2(1-\tau_n)}{4(1+\tau_n)}+w_+^2-w_-^2-c_n(w_+-w_-)+
\frac{\sqrt{2\tau_n n}\delta_n}{(1-\tau_n)}\nonumber\\
&+\delta_n^2\left(\frac{\tau_n(-3-2\tau_n-2\tau_n^2-2\tau_n^3
+\tau_n^4)}{4(1+\tau_n)(1-\tau_n)^3}\right)+\mathcal{O}\left(\frac{\delta_n^3}{\sqrt{n}(1-\tau_n)^5}\right)\nonumber\\
 =&-\frac{\tau_n\delta_n^2}{(1-\tau_n^2)}+\mathcal{O}\left(\frac{\delta_n^3}{\sqrt{n}(1-\tau_n)^5}\right).
\end{align*}
%To see how to choose $\delta_n$ appropriately we need to expand the exponent, using equation (\ref{taylorroot}); here
%Using this expression, we can calculate the whole exponent in the kernel (\ref{kprime}):
%\begin{multline}
%n\log{\tau_n}+\frac{c_n^2(1-\tau_n)}{4(1+\tau_n)}+n\log\left(\frac{w_+}{w_-}\right)+w_+^2-w_-^2-c_n(w_+-w_-)\\
%=-\frac{\tau_n\delta_n^2}{(1-\tau_n^2)}+\mathcal{O}\left(\frac{\delta_n^3}{\sqrt{n}(1-\tau_n)^5}\right).\end{multline}
After an asymptotic expansion of the denominator of the 
constant factor in (\ref{ksimplified}), Equation (\ref{kprime}) thus becomes
\begin{multline}\label{kprime2}
K_n^{'}(\zeta_1,\zeta_2)\\
=(1+o(1))\frac{a_nb_nw_- \theta_n\alpha_n\sqrt{(1-\tau_n)}
\exp\left\{-\tau_n\delta_n^2(1-\tau_n^2)^{-1}\right\}}
{2\pi^{3/2}\sqrt{(1+\tau_n)}\delta_n\left(1+i\frac{b_n}{2\delta_n}(\eta_1-\eta_2)\right)}e^{-\frac{1}{2}(\xi_1+\xi_2+\eta_1^2+\eta_2^2)}.
\end{multline} 
%\begin{multline}\label{kprime2}K_n^{'}(\zeta_1,\zeta_2)=\frac{a_nb_nw_- \theta_n\alpha_n\sqrt{(1-\tau_n)}\exp \left\{\frac{-\tau_n\delta_n^2}{(1-\tau_n^2)}+\mathcal{O}\left(\frac{\delta_n^3}{\sqrt{n}(1-\tau_n)^5}\right)  \right\}}{2\pi^{3/2}\sqrt{1+\tau_n}\delta_n\left(1+i\frac{b_n}{2\delta_n}(\eta_1-\eta_2)+(\xi_1+\xi_2)\mathcal{O}\left(\frac{\sqrt{(1-\tau_n)}}{\delta_n\sqrt{\log{\sigma_n}}}\right)\right)} \\
%\times\exp\left\{-\frac{1}{2}(\xi_1+\xi_2)-\frac{1}{2}(\eta_1^2+\eta_2^2)+o(1)\right\}, 
%\end{multline}
The exact choice of $\delta_n$ is now given by requiring that  $K_n^{'}(\zeta_1,\zeta_2)$ have a finite limit, say
\begin{equation*}
\lim_{n\to \infty}\frac{a_nb_nw_- \theta_n\alpha_n\sqrt{(1-\tau_n)}
\exp\left\{-\tau_n\delta_n^2(1-\tau_n^2)^{-1}\right\}}{2\pi^{3/2}\sqrt{(1+\tau_n)}\delta_n}
=\frac{1}{{\sqrt{\pi}}},
\end{equation*} 
or equivalently, in view of (\ref{choicea}), 
\begin{equation}\label{deltaequation}
\lim_{n\to \infty}\frac{\sqrt{(1+\tau_n)}}{2^{11/4}
\tau_n^{5/4}\pi}n^{1/4}(1-\tau_n)^2\delta_n^{-5/2}
\exp\left\{\frac{-\tau_n\delta_n^2}{(1-\tau_n^2)}\right\}= 1.
\end{equation}
Equation (\ref{deltaequation}) is satisfied for
\begin{multline*}\delta_n= \sqrt{\frac{(1+\tau_n)}{4\tau_n}}
\sqrt{(1-\tau_n)6\log\sigma_n}\\
-\sqrt{\frac{(1+\tau_n)}{\tau_n}}
\sqrt{\frac{(1-\tau_n)}{6\log\sigma_n}}
\left(\frac{5}{4}\log(6\log\sigma_n)
+\log(2^{1/4}(1+\tau_n)^{3/4}\pi)\right), 
\end{multline*}
so by (\ref{choicea}) we may choose 
\begin{equation*}a_n
=\sqrt{\frac{(1+\tau_n)}{\tau_n}}
\sqrt{\frac{(1-\tau_n)}{6\log\sigma_n}},
\end{equation*}
and
\begin{equation*}b_n
=\left(\frac{8}{\tau_n^2(1+\tau_n)}\right)^{1/4}
\left(\frac{n(1-\tau_n)^5}
{6\log\sigma_n}\right)^{1/4},
\end{equation*}
which are the choices in the statement of the theorem.
Since these parameters satisfy the assumptions (\ref{orderofdelta}) 
through (\ref{orderofb}) and $b_n \gg\delta_n$, 
\begin{equation*}\label{almostext}
K_n^{'}(\zeta_1,\zeta_2)
=\frac{e^{-\frac{1}{2}(\xi_1+\xi_2)
-\frac{1}{2}(\eta_1^2+\eta_2^2)}}
{\sqrt{\pi}\left(1+i\frac{b_n}{2\delta_n}(\eta_1-\eta_2)\right)}
(1+o(1))\to M_{P2}(\zeta_1,\zeta_2), 
\end{equation*}
as  $n\to \infty$.
%so $K_n^{'}$ converges point-wise to $K_{P}$.

It remains to prove that, for any given 
$\xi_0\in \mathbb{R}$,  
$\tilde{K}_n^{\tau_n}(\zeta,\zeta)$ 
is dominated by an integrable function on
 $(\xi_0,\infty)\times \mathbb{R}$ for every sufficiently large $n$.
To estimate the integral 
\begin{equation*} 
\iint_{(\theta_n^{-1}\mathbb{T})\times\mathbb{R}}
F_{w_-,w_+}^{\zeta,\zeta}(s,t)\ud t\ud s
\end{equation*} 
from above, let $\epsilon<1$ be fixed and consider any $n$ so large that
\begin{equation}
|\xi_0|<\epsilon\frac{(1-\tau_{n})(c_n-4w_-)}{(1+\tau_{n})b_n^2 a_{n}}\asymp \frac{\log \sigma_{n}}{(1-\tau_{n})}.
%\epsilon\frac{(1-\tau_{n_0})}{(1+\tau_{n_0})b_n^2\theta_{n_0}^2w_- a_{n_0}}
\end{equation}

For such a choice of $n$, it follows that
\begin{equation*}\epsilon_1:=\frac{3\alpha_n^2a_n|\xi_0|}{w_+}%<\frac{w_+}{w_-}\frac{n\alpha_n^2a_n|\xi_0|}{w_+^3}=|\xi_0|2\theta_n^2w_-a_n
<\epsilon\frac{4(1-\tau_n)}{(1+\tau_n)b_n^2}\asymp\sigma_n^{-3}\sqrt{\log\sigma_n},
\end{equation*}
\begin{equation*}
\epsilon_2:
%=\frac{|\xi_0|\theta_n^2w_-a_n}{1+4p\theta_n^2} 
=\frac{|\xi_0|a_n}{c_n-4w_-} 
%<\epsilon\frac{2(1-\tau_n)}{(1+\tau_n)b_n^2}
<\epsilon\frac{(1-\tau_n)}{(1+\tau_n)b_n^2},
\end{equation*} 
and  that 
\begin{equation*}
\frac{\tau_na_n|\xi_0|}{(\tau_n w_+-w_-)}
<%\epsilon \frac{\tau_n(1-\tau_n)}
%{(1+\tau_n)b_n^2\theta_n^2w_-(\tau_n w_+-w_-)}\nonumber\\
\epsilon\frac{\tau_n}{(1+\tau_n)}\left(1+\mathcal{O}\left(\frac{1}{\log\sigma_n}\right)\right)<\epsilon.
\end{equation*}
% and $a_n|\xi_0|<\epsilon w_+\alpha_n^{-2}n^{-1}$ ($t'^2$ - term) for some $\epsilon<1$,  
To capture some of the oscillatory terms in the exponent of 
$F_{w_-,w_+}^{\zeta,\zeta}$, 
we may again change 
contours of integration, by Cauchy's theorem. Replacing the real 
line by the contour $\mathbb{R}-ia_n\xi/(2\alpha_n)$ 
in the $t$-integral, % and  
% $\{w:w \in \mathbb{R}\}$ by $\{w: w+ia_n\xi/(2\alpha_n) 
%\in \mathbb{R}\}$ 
and putting $t'=t+ia_n\xi/(2\alpha_n)$, gives 
\begin{multline}\label{factorized1}
 \left|\iint_{(\theta_n^{-1}\mathbb{T})\times\mathbb{R}}
F_{w_-,w_+}^{\zeta,\zeta}(s,t)\ud t\ud s\right|\\
%&=&\left|\iint_{(\theta_n^{-1}\mathbb{T})\times\mathbb{R}}
%F_{w_-,w_+}^{\zeta,\zeta}(s,t)\ud t'\ud s\right|\nonumber\\
\leq\frac{1}{(\tau_n w_+-w_--\tau_n a_n|\xi_0|)}\int_{(\theta_n^{-1}\mathbb{T})}
e^{\Phi_{w_-}^{\zeta}(\theta_n s)}\ud s\left|\int_{\mathbb{R}}
e^{\Xi_{w_+}^{\zeta}\left(t'-ia_n\xi/{(2\alpha_n)}\right)} \ud t'\right|.
\end{multline}
%n\log\left(1+\frac{i\alpha_n t}{w_+}\right)
%+2i\alpha_nw_+t-\alpha_n^2t^2-i\alpha_n t(c_n+a_n \xi-ib_n\eta)\right\}\ud t'\right|
%\end{align}
%\leq \sqrt{2\pi}
%\exp\left\{\frac{1}{2}\alpha_n^2b_n^2\eta^2\right\}
Now, using that $f_n'(w_+)=0$ and the inequality 
$\log(1+x)\leq x$ gives an estimate
\begin{align}\label{dominated1}
\left|\int_{\mathbb{R}}
\exp\right.&\left.\left\{\Xi_{w_+}^{\zeta}\left(t'-ia_n\xi/{(2\alpha_n)}\right)\right\} \ud t'\right|\nonumber\\
 \leq&\int_{\mathbb{R}}
\exp\left\{\operatorname{Re}\left(\Xi_{w_+}^{\zeta}\left(t'-ia_n\xi/{(2\alpha_n)}\right)\right)\right\}\ud t'\nonumber\\
%&\left|\int_{\mathbb{R}}
%\exp\left\{n\log\left(1+\frac{i\alpha_n t'}{w_++a_n\xi_2/2}\right)
%+n\log\left(1+\frac{a_n \xi_2}{2w_+}\right)
%+2i\alpha_nw_+\left(t'-\frac{ia_n\xi}{2\alpha_n}\right)\right.\right.\nonumber\\
%&\left.\left.-\alpha_n^2\left(t'-\frac{ia_n\xi}{2\alpha_n}\right)^2
%-i\alpha_n \left(t'-\frac{ia_n\xi}{2\alpha_n}\right)(c_n+a_n \xi-ib_n\eta)\right\}\ud t\right|\nonumber\\
 \leq&\int_{\mathbb{R}}\exp\left\{-\frac{1}{2}t'^2\left(2\alpha_n^2-\frac{n\alpha_n^2}{(w_++a_n \xi/2)^2}\right)+\alpha_nb_n\eta  t'
\right.\nonumber\\
&\left.+n\log\left(1+\frac{a_n \xi}{2w_+}\right)+a_n\xi(w_+-c_n/2)-\frac{a_n^2\xi^2}{4}\right\}\nonumber\\
 \leq&\int_{\mathbb{R}}\exp\left\{-\frac{1}{2}t'^2\left(1-\epsilon_1\right)+\alpha_nb_n\eta t'-\frac{a_n^2\xi^2}{4}\right\}\nonumber\\
 =&\sqrt{\frac{2\pi}{\left(1-\epsilon_1\right)}}\exp\left\{\frac{\alpha_n^2b_n^2\eta^2}{2\left(1-\epsilon_1\right)}-\frac{a_n^2\xi^2}{4}\right\}.
\end{align}
%\begin{equation}
%\int_{\mathbb{R}}\exp\left\{n\log\left|1+\frac{i\alpha_n t}{w_+}\right|
%-\alpha_n^2t^2-\alpha_nb_n\eta t\right\}\ud t\leq \sqrt{2\pi}
%\exp\left\{\frac{1}{2}\alpha_n^2b_n^2\eta^2\right\}.
%\end{equation}
To obtain a uniform bound in $n$ of the first integral, recall that 
\begin{equation*}
c_n'-4w_-\geq (c_n-4w_-)\left(1-\frac{|\xi_0|a_n}{c_n-4w_-}\right)
%q-4p =&\theta_n^{-2}\left(1+\xi\theta_n^2w_-a_n\right)\nonumber\\
%&\geq&\theta_n^{-2}\left(1-|\xi_0|\theta_n^2w_-a_n\right)  
\end{equation*}
where  $c_n'=c_n+a_n\xi$.
Suppose without loss of generality that $\eta \leq 0$. 
Then there is a 
$v_{\eta}\in [0,\pi/2)$ such that  ${\Phi_{w_-}^{\zeta}}'(v_{\eta})=0$ 
and since 
\begin{align}
{\Phi_{w_-}^{\zeta}}''(v_{\eta})
&=4w_-^2\cos 2v_{\eta}-w_-c_n'\cos v_{\eta}+w_-b_n\eta\sin v_{\eta}\nonumber\\
&<w_-(4w_--c_n)<0,
\end{align}
%Choose $S_0$ such that $1\ll S_0\ll\min( \sigma_n,n^{1/3}(1-\tau_n)^{1/2}b_n^{-1/3}|\eta|^{-1/3})$ and define the set $I=\{s: |s-v_{\eta}/\theta_n|\leq S_0\}$. 
a saddle point argument gives 
\begin{multline}\label{ssadel}
\int_{|\theta_n s|<\pi}e^{\Phi_{w_-}^{\zeta}(\theta_n s)}\ud s\\
%&=\int_{I}e^{\Phi_{w_-}^{\zeta}(\theta_n s)}\ud s+o(1)\nonumber\\ 
 \leq C_1e^{\Phi_{w_-}^{\zeta}(v_{\eta})}\int_{\mathbb{R}}
\exp\left\{-\frac{1}{2}\left(\frac{c_n'-4w_-}{c_n-4w_-}\right)\left(s-\frac{v_{\eta}}{\theta_n}\right)^2\right\} \ud s
 %\leq&C_1\sqrt{\frac{2\pi}{\left(1-\epsilon_2\right)}}
%e^{\Phi_{w_-}^{\zeta}(v_{\eta})}\nonumber\\
 \leq Ce^{\Phi_{w_-}^{\zeta}(v_{\eta})}.
\end{multline}
%\begin{align}
%\int_{|v|<\pi}e^{\Phi_{w_-}^{\zeta}(v)}\ud v
% \leq&(1+o(1))e^{\Phi_{w_-}^{\zeta}(v_{\eta})}\int_{|v|<\pi}
%e^{-\frac{(q-4p)}{2}v^2} \ud v\nonumber\\
% \leq& (1+o(1))\frac{\sqrt{2\pi}}{\theta_n\sqrt{q-4p}}
%e^{\Phi_{w_-}^{\zeta}(v_{\eta})},
%\leq \frac{3\sqrt{2\pi}}
%{2\sqrt{\left(1-|\xi_0|\theta_n^2w_-a_n\right)}}e^{\Phi_{w_-}^{\zeta}(v_{\eta})}.
%\end{align}
Consider the difference
\begin{multline*}
\frac{b_n^2 \eta^2w_-^2\theta_n^2}
{2(1-\epsilon_2)}-\Phi_{w_-}^{\zeta}(v_{\eta})\\
=\frac{b_n^2 \eta^2w_-^2\theta_n^2}
{2(1-\epsilon_2)}+b_nw_-\eta \sin v_{\eta}
-w_-c_n'(\cos v_{\eta}-1)-w_-^2(1-\cos2 v_{\eta}))\\
\geq -\frac{(1-\epsilon_2)\sin^2v_{\eta}}{2\theta_n^2}-c_n'w_-(\cos v_{\eta}-1)-w_-^2(1-\cos2 v_{\eta})=:g(v_{\eta}).
\end{multline*}
%can only have real roots if
%\begin{equation}
%g(v_{\eta})=\frac{\sin^2v_{\eta}}{\theta_n^2}-2q(\cos v_{\eta}-1)+2p(1-\cos2 v_{\eta})\geq 0.
%\end{equation}
Since 
% Any extreme point of $g$ satisfies the equation $g'(v_{\eta})=0$, implying 
\begin{equation*}\label{gderivativegumbel}
 g'(v_{\eta})=
c_n w_-\sin v_{\eta}\left(\frac{c_n'}{c_n}-(1-\epsilon_2)\cos v_{\eta}\right)\geq 0,
\end{equation*}
%and
%\begin{equation}1-\frac{2q}{2/\theta_n^2+8p}\leq \frac{|\xi_0|\theta_n^2w_- a_n}{1+4p\theta_n^2}\leq\frac{|\xi_0|\theta_n^2w_- a_n}{1+4p\theta_n^2}.
%\end{equation} 
%Clearly $v_{\eta}$ is an increasing function of $|\eta|$, and the reasoning of %Lemma \ref{trigfunction} shows that  $v_{\eta}=\theta_n^2w_-b_n|\eta|(1+o(1))$ %when $v_{\eta}\ll 1$. Noting that for some positive constant $C$
%\begin{equation}
%\cos v_{\eta}\leq 1- C v_{\eta}^2,
%\end{equation}
%and recalling (\ref{etachoice}), 
%Equation (\ref{gderivative}) becomes
%\begin{align}
%g'(v_{\eta})
%&\geq&\sin v_{\eta}(4p+\theta_n^{-2})
%\left(-\frac{|\xi_0|\theta_n^2w_- a_n}{1+4p\theta_n^2}+Cv_{\eta}^2\right)\nonumber\\ 
%&\geq& \sin v_{\eta}(4p+\theta_n^{-2})
%\frac{C_1\theta_n^2w_-\sqrt{(1-\tau_n)}}{\sqrt{\log\sigma_n}}
%\left(\eta^2-C_2\frac{\log{\sigma_n}}{(1-\tau_n)}\right)\nonumber\\
%&>&0
%\end{align}
$g$ is increasing on $[0,\pi/2)$ so,  noting that $g(0)=0$, it follows 
from (\ref{ssadel}) that
\begin{equation*}
\int_{|\theta_n s|<\pi}e^{\Phi_{w_-}^{\zeta}(\theta_n s)}\ud s
\leq C\exp\left\{\frac{b_n^2 \eta^2w_-^2\theta_n^2}{2(1-\epsilon_2)}\right\}.
\end{equation*}
This, together with the estimates (\ref{factorized1}) and (\ref{dominated1}), inserted into (\ref{rescaledkernel}) shows that  
\begin{align}\label{integrableB}
|\tilde{K}_n^{\tau_n}(\zeta,\zeta)|
 %=&\frac{a_nb_n\tau_nw_- \theta_n\alpha_n}{4\pi^{3/2}\sqrt{(1-\tau_n^2)}}\nonumber\\
%&\times\exp \left\{n\log{\tau_n}+\frac{(1-\tau_n)(c_n+a_n\xi)^2}{4(1+\tau_n)}
%-\frac{b_n^2(1+\tau_n)\eta^2)}{4(1-\tau_n)}\right.\nonumber\\
%&\left.+n\log\left(\frac{w_+}{w_-}\right)
%+(w_+-w_-)(w_++w_--c_n-a_n\xi)\right\}\nonumber\\
%&\times \iint_{(\theta_n^{-1}\mathbb{T})\times\mathbb{R}}
%F_{w_-,w_+}^{\zeta,\zeta}(s,t)\ud t\ud s\nonumber\\
% \leq&\frac{6\pi a_nb_n\tau_nw_- \theta_n\alpha_n
%(1+C\sigma_n^{-3}\sqrt{\log\sigma_n})}
%{8\pi^{3/2}\sqrt{(1-\tau_n^2)}(\tau_n w_+-w_-)(1
 %\frac{a_nb_nw_- \theta_n\alpha_n}
%{\sqrt{(1-\tau_n)}(\tau_n w_+-w_-)}\nonumber\\
%&\times\exp \left\{n\log{\tau_n}
%+\frac{(1-\tau_n)c_n^2}{4(1+\tau_n)}
%+n\log\left(\frac{w_+}{w_-}\right)
%+(w_+-w_-)(w_++w_--c_n)\right.\nonumber\\
 \leq&C_1\exp\left\{\left(\frac{(1-\tau_n)c_n a_n}{2(1+\tau_n)}
-a_n(w_+-w_-)\right)\xi-\frac{\tau_na_n^2\xi^2}{2(1+\tau_n)}\right.\nonumber\\
&\left.-\frac{b_n^2\eta^2}{2}\left(\frac{(1+\tau_n)}{2(1-\tau_n)}
-\frac{\alpha_n^2}{\left(1-\epsilon_1\right)}-\frac{w_-^2\theta_n^2}
{\left(1-\epsilon_2\right)}\right)\right\}\nonumber\\
%&\leq & \frac{3}{2\sqrt{\pi}(1-\epsilon/2)(1-C\sigma_n^{-3}\sqrt{\log\sigma_n})}\exp\left\{\left(\frac{(1-\tau_n)c_n a_n}{2(1+\tau_n)}
%-a_n(w_+-w_-)\right)\xi \nonumber\right.\\
%&\left.-\frac{a_n^2\xi^2}{4}+\frac{a_n^2(1-\tau_n)\xi^2}{4(1+\tau_n)}-\frac{b_n^2\eta^2}{2}\left(\frac{(1+\tau_n)}{2(1-\tau_n)}-\frac{\alpha_n^2}{\left(1-\epsilon\frac{(1-\tau_n)}{(1+\tau_n)b_n^2}\right)}-\frac{w_-^2\theta_n^2}{\left(1-\epsilon\frac{(1-\tau_n)}{(1+\tau_n)b_n^2}\right)}\right)\right\}\nonumber\\
%&\leq & \frac{3(1+C\sigma_n^{-3}\sqrt{\log \sigma_n})}{2\sqrt{\pi}(1-\epsilon)}\nonumber\\
\leq & C_{\xi_0}e^{-(1-\epsilon)\xi}
\exp\left\{-\eta^2+\frac{b_n^2\eta^2}{2}\left(\frac{\alpha_n^2\epsilon_1}
{\left(1-\epsilon_1\right)}
+\frac{w_-^2\theta_n^2\epsilon_2}{(1-\epsilon_2)}\right)\right\}\nonumber\\
\leq & C_{\xi_0}e^{-(1-\epsilon)\xi-
(1-\epsilon)\eta^2},
\end{align}
where $C_{\xi_0}$ denotes a constant depending on $\xi_0$. This provides an 
integrable bound 
on $\tilde{K}_n^{\tau_n}(\zeta,\zeta)$, so  
by 
Lemma (\ref{convergenceoflastparticle}) and the point-wise convergence 
of $K_n'$ to $M_{P2}$, 
$\tilde{Z}_n^{\tau_n}$ converges weakly to $Z_{P}$ 
and the last particle distribution $F_n^{\tau_n}$ converges to $F_{G}$. 
\end{proof}

\subsection{Estimates on  $G(\zeta_1,\zeta_2)$ when $\sigma_n$ tends to a finite limit}\label{estimatesairy}
Throughout this section, we will suppose that 
$\sigma_n\to \sigma \in [0,\infty)$. 
Again, we prove a series of estimates on $G(\zeta_1,\zeta_2)$ for this case. 
First, in Lemma \ref{localairy}, on 
the main contribution from close 
to the saddle points, and then in Lemmas \ref{globalairy} and 
\ref{globalairy2} on the remaining, small contributions. 
 
Fix the choices $\delta_n=0$,
$a_n=\sqrt{2\tau_n}n^{-1/6}$  and 
$b_n=\sqrt{2\tau_n(1-\tau_n)}$ of the scaling parameters.
Since $\delta_n=0$, $w_--\tau_n w_+=0$ and we must choose contours 
slightly removed 
from the saddle points in order for $G(\zeta_1,\zeta_2)$
to converge, 
say   $r_1=w_--\tau_n\alpha_n\epsilon_n$ and 
$r_2=w_++\alpha_n\epsilon_n$, where 
$\epsilon_n \ll 1 $ will be specified. % $r_1=w_--\tilde{\epsilon}_n$ and $r_2=w_++\tilde{\epsilon}_n$% 
%for some $\tilde{\epsilon}_n>0$. 
The quadratic terms in the expansions of $f_n$ at the saddle points 
$w_-$ and $w_+$ may now be arbitrarily small (depending on how small 
$\sigma_n$ becomes), so we choose  
$\alpha_n=(2\tau_n)^{-1/2}n^{1/6}$ and $\theta_n=n^{-1/3}$  so 
that the \emph{third} order terms become of order one.

To simplify the calculations by using the fact that $w_+$ 
is a saddle point of $f_n$, we note that, by definition, 
\begin{equation}\label{Falternative}
F_{r_1,r_2}^{\zeta_1,\zeta_2}(s,t)
=F_{r_1,w_+}^{\zeta_1,\zeta_2}(s,t-i\epsilon_n)e^{-f_n(r_2)+f_n(w_+)
+\alpha_n\epsilon_n(a_n\xi_2-ib_n\eta_2)}.
\end{equation}
%For technical reasons, depending on the  asymptotics of $\sigma_n$, 
%we need to make different choices of the parameters $\epsilon_n$ and $ T_n$, 
 
We begin by approximating the main contribution, from close to 
the saddle points.
\begin{lemma}\label{localairy}
%\emph{(Local estimates when $\sigma_n=\mathcal{O}\left(1\right)$.)} 
Given $\epsilon_n$ and $T_n$, choose 
$r_1=w_--\tau_n\alpha_n\epsilon_n$ and 
$r_2=w_++\alpha_n\epsilon_n$ and  define the contour 
\begin{align*}\tilde{\gamma}_n& :(-T_n,T_n)\to \mathbb{C},\\
 \tilde{\gamma}_n&(t) = t+i\epsilon_n.
\end{align*} 
Let $(\zeta_1,\zeta_2)\in \mathbb{R}^4$ be fixed.% and suppose $\sigma_n\to \sigma \in[0,\infty)$ as $n\to \infty$. 

% and define the contours $\gamma^T_{\tilde{w_-}}=\{w\in \gamma_{\tilde{w_-}}: |\arg w| \leq n^{-1/3}T\}$ and  $\Gamma^T_{\tilde{w_+}}=\{w\in \gamma_{\tilde{w_+}}: |\operatorname{Im} w| \leq \frac{n^{1/6}T}{\sqrt{2\tau_n}}\}$ and
\begin{itemize}

\item[(i)]
If $\sigma>0$ or  $\sigma_n=\mathcal{O} (n^{-2/{15}})$ %$\sigma_n\leq n^{-2/{15}}$, 
choose  $T_n=n^{k_1}$ 
and $\epsilon_n=n^{-k_2}$ for some
$0<k_1<1/{15}$ %($k_1<1/{18}$  ?) 
and  $k_1<k_2<2k_1$.
\item[(ii)]
If  $\sigma=0$ but $\sigma_n n^{2/{15}} \to\infty$ , choose  $T_n=\sigma_n^{-m_1}$ and $\epsilon_n=\sigma_n^{m_2}$, 
for some $0<m_1<1/2$, and $m_1<m_2<2m_1$. 
\end{itemize}
For these choices,
\begin{multline}
\sqrt{\frac{\tau_n}{2}}n^{1/6}e^{-\frac{1}{2}(\eta_1^2+\eta_2^2)}
%\exp\left\{f_n(r_2)-f_n(w_+)
%-\alpha_n \epsilon_n(a_n \xi_2-i b_n\eta_2)\right\}\nonumber\\
\iint_{(-T_n,T_n)^2}F_{r_1,w_+}^{\zeta_1,\zeta_2}(s,t-i\epsilon_n)\ud t\ud s\\
=-\int_{\tilde{\gamma}_n}\int_{\tilde{\gamma}_n}
\frac{e^{-\frac{1}{2}(\sigma_n v-\eta_2)^2+\frac{i}{3}v^3
+i\xi_2v -\frac{1}{2}(\sigma_n u+\eta_1)^2+\frac{i}{3}u^3
+i\xi_1u}}{i(u+v)}\ud u\ud v+ o(1).
\end{multline}
%where the order of the error term is given in Equations 
%(\ref{localairyestimate1}), (\ref{localairyestimate3}), and 
%(\ref{localairyestimate4}).
\end{lemma}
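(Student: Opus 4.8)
The plan is a localized saddle-point computation around the two saddles $w_-$ and $w_+$, exploiting that in this regime $\delta_n=0$ forces the quadratic parts of $f_n$ at the saddles to be of order $\sigma_n^2$ (hence subdominant) while the cubic parts are of order one, so that the kernel acquires Airy-type behaviour in each variable.

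First I would invoke the identity (\ref{Falternative}) to trade the contour radius $r_2=w_++\alpha_n\epsilon_n$ for the saddle value $w_+$; the prefactor $e^{-f_n(r_2)+f_n(w_+)+\alpha_n\epsilon_n(a_n\xi_2-ib_n\eta_2)}$ is $1+o(1)$ because $f_n'(w_+)=0$, $f_n''(w_+)\alpha_n^2=\sigma_n^2/\tau_n=\mathcal{O}(1)$, $f_n'''(w_+)\alpha_n^3=2$, and $\alpha_n\epsilon_na_n=\epsilon_n\to0$, $\alpha_n\epsilon_nb_n=\sigma_n\epsilon_n\to0$. The offset $r_1=w_--\tau_n\alpha_n\epsilon_n$ is chosen so that, for $s$ in the truncated range, $r_1e^{i\theta_ns}=w_-e^{i\theta_n(s+i\epsilon_n)}(1+o(1))$; this effectively lifts the $\gamma_{r_1}$-contour to imaginary part $\epsilon_n$, which is why the cubic term below is required to produce a damping factor.

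Next I would Taylor-expand the exponent of $F_{r_1,w_+}^{\zeta_1,\zeta_2}$ from Lemma \ref{parameterization} on the truncated square. Expanding $n\log(1+i\alpha_nt/w_+)$ via $\alpha_n/w_+=n^{-1/3}$: the $\mathcal{O}(n^{2/3})$ linear term cancels against the contribution of $c_n-2w_+=\sqrt{2n\tau_n}$ (this is $f_n'(w_+)=0$); the quadratic term combines with $-\alpha_n^2t^2$ into $-\tfrac{1}{2\tau_n}\sigma_n^2t^2$ (using $n^{1/3}(1-\tau_n)=\sigma_n^2$); the cubic term gives $-\tfrac{i}{3}t^3$; the remaining linear part gives $-i\xi_2t-\sigma_n\eta_2t$ (using $\alpha_na_n=1$, $\alpha_nb_n=\sigma_n$); and the fourth and higher order terms are $\mathcal{O}(n^{-1/3}|t|^4)$. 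The analogous expansion at $w_-$, using $w_-\theta_n=\tau_n\alpha_n$ and $w_+\theta_n=\alpha_n$, produces $-\tfrac12\sigma_n^2s^2+\tfrac{i}{3}s^3(1+o(1))+i\xi_1s-\sigma_n\eta_1s$ from $\Phi_{w_-}^{\zeta_1}(\theta_ns)+i\Psi_{w_-}^{\zeta_1}(\theta_ns)$, with an error $\mathcal{O}(n^{-1/3}|s|^4)$. Writing $v=-(t-i\epsilon_n)$, $u=s+i\epsilon_n$, completing the squares $-\tfrac12(\sigma_nv-\eta_2)^2$, $-\tfrac12(\sigma_nu+\eta_1)^2$ — which is where the prefactor $e^{-\frac12(\eta_1^2+\eta_2^2)}$ is absorbed — and using $1/\tau_n=1+\mathcal{O}(\sigma_n^2n^{-1/3})$, I recover the exponent of the target integrand. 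For the denominator, $r_1e^{i\theta_ns}-\tau_n(w_++i\alpha_n(t-i\epsilon_n))=i\tau_n\alpha_n(u+v)(1+o(1))$ follows from $w_-=\tau_nw_+$ and $w_-\theta_n=\tau_n\alpha_n$; since $\sqrt{\tau_n/2}\,n^{1/6}=\tau_n\alpha_n$, the external prefactor cancels this factor exactly, and the orientation reversal $dv=-dt$, $du=ds$ supplies the overall minus sign in front of the double integral, leaving the integrand $1/(i(u+v))$.

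Finally I would assemble the error estimate. On the truncated square the pointwise difference between $F_{r_1,w_+}^{\zeta_1,\zeta_2}(s,t-i\epsilon_n)$ and the target integrand is bounded by the target integrand times $\mathcal{O}(n^{-1/3}(T_n^4+1))$ (from the Taylor remainders) plus smaller terms (the $\epsilon_n$- and the $\tau_n\neq1$ corrections). Since both contours sit at height $\epsilon_n$, the terms $-\tfrac{i}{3}(t-i\epsilon_n)^3$ and $\tfrac{i}{3}(s+i\epsilon_n)^3$ contribute damping factors $e^{-\epsilon_nt^2}$ and $e^{-\epsilon_ns^2}$, so the $L^1$-mass of the limiting integrand over the truncated square — where the only delicate point is the $1/|u+v|$ factor, regularized by $|u+v|\geq2\epsilon_n$ — is $\mathcal{O}(\epsilon_n^{-1/2}\log(1/\epsilon_n))$. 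Hence the total error is $\mathcal{O}(n^{-1/3}T_n^4\epsilon_n^{-1/2}\log(1/\epsilon_n))$ in case (i) (and the analogous $\sigma_n$-power in case (ii)), and the stated ranges $0<k_1<1/15$, $k_1<k_2<2k_1$ (respectively $0<m_1<1/2$, $m_1<m_2<2m_1$, with $\sigma_nn^{2/15}\to\infty$ guaranteeing the $\sigma_n$-power version is still $o(1)$) are exactly what forces this to vanish, while $k_2<2k_1$ also keeps the truncation tails treated in Lemmas \ref{globalairy} and \ref{globalairy2} negligible. I expect the main obstacle to be precisely this balancing act: the limiting integrand fails to be absolutely integrable without the $\epsilon_n$-regularization, so the fourth-order Taylor remainder must be weighed against an $L^1$-mass that blows up as $\epsilon_n\to0$, and the two scales $T_n,\epsilon_n$ must be tuned simultaneously against that remainder, the $\tau_n\neq1$ corrections, and the tail bounds of the following lemmas.
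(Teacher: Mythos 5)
Your proposal is correct and follows the paper's argument in all essential respects: Taylor-expand the exponent of $F_{r_1,w_+}^{\zeta_1,\zeta_2}(s,t-i\epsilon_n)$ to cubic order around the saddles (matching coefficients via $w_-\theta_n=\tau_n\alpha_n$, $\alpha_na_n=1$, $\alpha_nb_n=\sigma_n$, $f_n'(w_\pm)=0$, $1-\tau_n=\sigma_n^2n^{-1/3}$), linearize the denominator to $i\tau_n\alpha_n(u+v)$, bound the pointwise error by the Taylor remainder times the modulus of the limiting integrand, estimate the $L^1$-mass against the regularized singularity $|u+v|\geq 2\epsilon_n$, and tune $(T_n,\epsilon_n)$.

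Two minor points of divergence from the paper. First, invoking (\ref{Falternative}) at the outset is unnecessary: the lemma's conclusion already involves $F_{r_1,w_+}$ rather than $F_{r_1,r_2}$, and that identity (together with your --- correct --- observation that its prefactor is $1+o(1)$) belongs in the proof of Theorem \ref{main}(ii) where the lemma is applied, not in the proof of the lemma itself. Second, your $L^1$-mass estimate $\mathcal{O}\bigl(\epsilon_n^{-1/2}\log(1/\epsilon_n)\bigr)$, obtained by keeping the Gaussian damping $e^{-\epsilon_n(s^2+t^2)}$ from the cubic terms at contour height $\epsilon_n$, is genuinely sharper than the paper's bound
\begin{equation*}
\iint_{(-T_n,T_n)^2}\frac{\ud s\,\ud t}{|i(s-t)-2\epsilon_n|}\leq CT_n\log(T_n/\epsilon_n),
\end{equation*}
which discards the damping and estimates the singular factor alone; indeed $\epsilon_n^{-1/2}<T_n$ precisely when $k_2<2k_1$. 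Both suffice for the stated parameter ranges, but with your bound the condition $k_1<k_2$ is no longer needed for the local estimate (the $\epsilon_n$ Taylor remainder contributes only $\epsilon_n^{1/2}\log(1/\epsilon_n)$), and your unified comparison to the $\sigma_n$-integrand in both cases avoids the extra $\sigma_nT_n^2\log(T_n/\epsilon_n)$ term that the paper incurs by comparing to the $\sigma=0$ integrand in case (ii). So ``exactly'' in your final paragraph slightly overstates the necessity of the stated ranges; they are what the paper's coarser bound requires.
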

\begin{remark}The case in which $\sigma=0$ but  
condition (ii) is not
satisfied causes no problem, but for simplicity we omit the details. 
\end{remark} 

\begin{proof}Note that in both cases, $1 \ll T_n \ll n^{1/15}$ and $T_n^{-2}\ll \epsilon_n \ll T_n^{-1}$. %choose $\alpha_n=\frac{n^{1/6}}{\sqrt{2\tau_n}}$ and $\theta_n=n^{-1/3}$ and change variables $w_1=(w_--\epsilon\sqrt{\frac{\tau_n}{2}}n^{1/6}) e^{i\theta_n s}$ and $w_2=w_++i\alpha_n (t-i\epsilon)$ in integral \ref{integral}.
Put $\tilde{t}=t-i\epsilon_n$ and $\tilde{s}=s+i\epsilon_n$.
Using the expansion 
\begin{align*}\Phi_{r_1}^{\zeta_1}(v)+i\Psi_{r_1}^{\zeta_1}(v)
 =&v(-r_1b_n\eta_1+i(1-n-2r_1^2+r_1c_n'))\nonumber\\
&-\frac{1}{2}(r_1c_n'-4r_1^2)v^2-i\frac{r_1b_n\eta_1v^2}{2}\nonumber\\
&+i\frac{(8r_1^2-r_1c_n')}{6}v^3+\frac{r_1b_n\eta_1}{6}v^3+\mathcal{O}\left(nv^4\right)
\end{align*}
with $r_1=w_--\tau_n\alpha_n\epsilon_n$, and keeping in mind that $\epsilon_n\gg n^{-2/{15}}$, one obtains, after some messy but straight-forward calculations,

\begin{multline*}
\left|\Phi_{r_1}^{\zeta_1}(\theta_ns)+i\Psi_{r_1}^{\zeta_1}(\theta_n s)
-\left(i\tilde{s}(\xi_1+i\sigma_n \eta_1)
-\frac{\sigma_n^2}{2}\tilde{s}^2
+\frac{i}{3}\tilde{s}^3\right)\right|\\
\leq C\left(\epsilon_n+n^{-1/3}s^4\right).
\end{multline*}
 Using that $f_n'(w_+)=0$, the expansion of the exponent in $F_{r_1,w_+}^{\zeta_1,\zeta_2}(s,t-i\epsilon_n)$ becomes
\begin{multline}\label{taylorairy}
\Xi_{w_+}^{\zeta_2}\left(\tilde{t}\right)+\Phi_{r_1}^{\zeta_1}(\theta_n s)+i\Psi_{r_1}^{\zeta_1}(\theta_n s)\\
% =&-\frac{(1-\tau_n) n^{1/3}}{2\tau_n}\tilde{t}^2-\frac{i}{3}\tilde{t}^3+ \mathcal{O}\left(n^{-1/3}\tilde{t}^4\right)-i\alpha_n\tilde{t}(a_n \xi_2-ib_n\eta_2)+\Phi_{r_1}^{\zeta_1}(\theta_n s)+i\Psi_{r_1}^{\zeta_1}(\theta_n s)\nonumber\\
=-\frac{\sigma_n^2}{2}\tilde{t}^2-\frac{i}{3}\tilde{t}^3-i\tilde{t}(\xi_2-i\sigma_n\eta_2)-\frac{\sigma_n^2}{2}\tilde{s}^2+\frac{i}{3}\tilde{s}^3+i\tilde{s}(\xi_1+i\sigma_n\eta_1)\\
+n^{-1/3}\mathcal{O}\left(t^4+s^4\right)+\mathcal{O}(\epsilon_n).
\end{multline}
Note also that
\begin{equation*}
 \iint_{(-T_n,T_n)^2}\frac{\ud s \ud t}{|i(s-t)-2\epsilon_n|}
\leq CT_n\log{\frac{T_n}{\epsilon_n}}.
\end{equation*}
Consider first the case that either $\sigma>0$, or  
$\sigma_n=\mathcal{O} (n^{-2/{15}})$.
%$\sigma_n \to \sigma \in \mathbb{R}^+$ as $n\to \infty$. 
Then, by (\ref{rescaledkernel}) and (\ref{taylorairy}),
\begin{align}\label{localairyestimate1}
\left|\sqrt{\frac{\tau_n}{2}}\right.&\left.n^{1/6}
e^{-\frac{1}{2}(\eta_1^2+\eta_2^2)}
\iint_{(-T_n,T_n)^2}F_{r_1,w_+}^{\zeta_1,\zeta_2}(s,t-i\epsilon_n)\ud t  \ud s
\right.\nonumber\\
&\left.-\iint_{(-T_n,T_n)^2}
\frac{e^{-\frac{1}{2}(\sigma_n\tilde{t}+\eta_2)^2
-\frac{i}{3}\tilde{t}^3-i\tilde{t}\xi_2
 -\frac{1}{2}(\sigma_n\tilde{s}+\eta_1)^2
+\frac{i}{3}\tilde{s}^3+i\tilde{s}\xi_1}}
{i(s-t)-2\epsilon_n}\ud t\ud s\right|\nonumber\\
% =&\left|\int_{(-T_n,T_n)^2}
%\frac{e^{ -\frac{1}{2}(\sigma_n\tilde{t}+\eta_2)^2
%-\frac{i}{3}\tilde{t}^3-i\tilde{t}\xi_2 
%-\frac{1}{2}(\sigma_n\tilde{s}+\eta_1)^2
%+\frac{i}{3}\tilde{s}^3+i\tilde{s}\xi_1}}
%{(i(s-t)-2\epsilon_n)\left(1+
%o(1)\right)}\right.\nonumber\\
%&\left.\times\left(n^{-1/3}\left(\mathcal{O}\left(t^4\right)+\mathcal{O}\left(s^4\right)\right)+\mathcal{O}(\epsilon_n)\right)\ud t\ud s 
%\right|\nonumber\\
 \leq&C_1\left(n^{-1/3}T_n^4+\epsilon_n\right)\nonumber\\
&\times\int_{(-T_n,T_n)^2}
\left|\frac{e^{ -\frac{1}{2}(\sigma_n\tilde{t}+\eta_2)^2
-\frac{i}{3}\tilde{t}^3-i\tilde{t}\xi_2 
-\frac{1}{2}(\sigma_n\tilde{s}+\eta_1)^2
+\frac{i}{3}\tilde{s}^3+i\tilde{s}\xi_1}}
{i(s-t)-2\epsilon_n}\right|\ud t\ud s\nonumber\\
%&\times \left(T_n^4\mathcal{O}\left(n^{-1/3}\right)
%+\mathcal{O}(\epsilon_n)\right)\nonumber\\
 \leq&C_2\left(n^{-1/3}T_n^4+\epsilon_n\right)\int_{(-T_n,T_n)^2}\frac{e^{\epsilon_n(\xi_1-\xi_2) +\sigma_n^2\epsilon_n^2}}
{|i(s-t)-2\epsilon_n|}\ud t\ud s\nonumber\\
 \leq&C_3\log{\frac{T_n}{\epsilon_n}}\left(T_n^5 n^{-1/3}+T_n\epsilon_n\right)
\nonumber\\
 \leq&C\log{n}\left(n^{5k_1-1/3}+n^{k_1-k_2}\right)
%\mathcal{O}\left(\frac{(\epsilon_n+s)sn^{-1/3}}{|2\epsilon_n+i(s-t)|}\right)
\end{align}
%\mathcal{O}\left(\frac{(\epsilon_n+s)sn^{-1/3}}{|2\epsilon_n+i(s-t)|}\right)
Next, suppose $ n^{-2/{15}}\ll\sigma_n = o(1)$. %,$\sigma_n \to 0$ as $n\to \infty$. 
Then
\begin{align}\label{localairyestimate2}&\left|\sqrt{\frac{\tau_n}{2}}n^{1/6}
\iint_{(-T_n,T_n)^2}F_{r_1,w_+}^{\zeta_1,\zeta_2}(s,t-i\epsilon_n)\ud s \ud t\right.\nonumber\\
&\left. -\iint_{(-T_n,T_n)^2}
\frac{e^{-\frac{i}{3}\tilde{t}^3-i\tilde{t}\xi_2
+\frac{i}{3}\tilde{s}^3+i\tilde{s}\xi_1}}
{i(s-t)-2\epsilon_n}\ud t\ud s\right|\nonumber\\
% =&\left|\int_{(-T_n,T_n)^2}
%\frac{e^{-\frac{i}{3}\tilde{t}^3-i\tilde{t}\xi_2 
%+\frac{i}{3}\tilde{s}^3+i\tilde{s}\xi_1}}
%{(i(s-t)-2\epsilon_n)\left(1+
%o(1)\right)}\right.\nonumber\\
%&\left.\times e^{-\sigma_n(\tilde{s}\eta_1+\tilde{t}\eta_2)-\frac{\sigma_n^2}{2}(\tilde{s}^2+\tilde{t}^2)+n^{-1/3}\left(\mathcal{O}\left(t^4\right)+\mathcal{O}\left(s^4\right)\right)+\mathcal{O}(\epsilon_n)}\ud t\ud s 
%\right|\nonumber\\
 \leq&C_1\left(\sigma_n T_n+n^{-1/3}T_n^4
+\epsilon_n\right)\nonumber\\
&\times\int_{(-T_n,T_n)^2}
\left|\frac{e^{ -\frac{1}{2}(\sigma\tilde{t}+\eta_2)^2
-\frac{i}{3}\tilde{t}^3-i\tilde{t}\xi_2 
-\frac{1}{2}(\sigma\tilde{s}+\eta_1)^2
+\frac{i}{3}\tilde{s}^3+i\tilde{s}\xi_1}}
{i(s-t)-2\epsilon_n}\right|\ud t\ud s\nonumber\\
%&\times \left(\mathcal{O}(\sigma_n T_n)+T_n^4\mathcal{O}\left(n^{-1/3}\right)%+\mathcal{O}(\epsilon_n)\right)\nonumber\\
 \leq&C_2\left(\sigma_n T_n+n^{-1/3}T_n^4
+\epsilon_n\right)\int_{(-T_n,T_n)^2}\frac{e^{\epsilon_n(\xi_1-\xi_2)
}}
{|i(s-t)-2\epsilon_n|}\ud t\ud s%\left(\mathcal{O}(\sigma_n T_n)+T_n^4\mathcal{O}\left(n^{-1/3}\right)
%+\mathcal{O}(\epsilon_n)\right)
\nonumber\\
 \leq&C_3\left(\sigma_n T_n^2+T_n^5 n^{-1/3}+T_n\epsilon_n\right)
\log{\frac{T_n}{\epsilon_n}}\nonumber\\
\leq& C\log{\sigma_n}\left(\sigma_n^{1-2m_1}+\sigma_n^{-5m_1}n^{-1/3}
+\sigma_n^{-m_1+m_2}\right).
\end{align}
%If  $ n^{-2/{15}}\ll\sigma_n = o(1)$,
%\begin{multline}\label{localairyestimate3} 
%\log{\frac{T_n}{\epsilon_n}}\left(\sigma_n T_n^2+T_n^5 n^{-1/3}+T_n\epsilon_n\right)
%\left(\sigma_n T_n^2+T_n^5 n^{-1/3}+T_n\epsilon_n\right)
%\mathcal{O}\left(\log{\frac{T_n}{\epsilon_n}}\right)
%\\
%\leq C\log{\sigma_n}\left(\sigma_n^{1-2m_1}+\sigma_n^{-5m_1}n^{-1/3}
%+\sigma_n^{-m_1+m_2}\right), 
%\end{multline}
%and if $\sigma_n n^{2/{15}}<C$,
%\begin{multline}\label{localairyestimate4} \log{\frac{T_n}{\epsilon_n}}\left(\sigma_n T_n^2+T_n^5 n^{-1/3}+T_n\epsilon_n\right)
%\left(\sigma_n T_n^2+T_n^5 n^{-1/3}+T_n\epsilon_n\right)
%\mathcal{O}\left(\log{\frac{T_n}{\epsilon_n}}\right)
%\\
%\leq C\log n \left(\sigma_n n^{2k_1}+n^{5k_1-1/3}+n^{k_1-k_2}\right)
%.\end{multline}
Letting $u=s+i\epsilon_n$ and $v=-(t-i\epsilon_n)$, (\ref{localairyestimate1}) and (\ref{localairyestimate2}) give the conclusion.
\end{proof}

We turn to an estimate of the contribution to the integral of $F_{r_1,w_+}^{\zeta_1,\zeta_2}(s,t-i\epsilon_n)$ from outside 
the set $(-T_n,T_n)^2$.
\begin{lemma}\label{globalairy}
%\emph{Global estimates when $\sigma_n=\mathcal{O}\left(1\right)$.} 
%Suppose $\sigma_n\to \sigma\in [0,\infty)$.
Let $T_n$ and $\epsilon_n$ be as in Lemma \ref{localairy} %(--with 
%the additional requirement $b<5/9$-- why did I want this ?? remove ?), 
and put $r_1=w_--\tau_n\alpha_n\epsilon_n$. 
Let $(\zeta_1,\zeta_2)\in \mathbb{R}^4$ be fixed. Then
\begin{align}\label{globalairylemma}
\iint_{((\theta_n^{-1}\mathbb{T})\times\mathbb{R})\setminus(-T_n,T_n)^2}
F_{r_1,w_+}^{\zeta_1,\zeta_2}(s,t-i\epsilon_n)\ud t\ud s
= n^{-1/6}o(1).%\frac{e^{-\epsilon_n T_n^2}}{\epsilon_n^2 T_n}.
\end{align}
%where $|R_n=o(1)|$.
%for some $k>0$,
%\begin{equation}
%|R_n'|\leq 
%C\left(\epsilon_n^{-3/2} n^{1/3}e^{-n^{1/30}}+\epsilon_n^{-3/2}e^{-kn}+\frac{e^{-\epsilon_n T_n^2}}{\epsilon_n^{5/2} T_n}\right)=o(1).
%\end{equation}
%\left|\iint_{|s|>T \textrm{ or} |t|>T}\frac{\exp\left\{n\log\left(1+\frac{i\alpha_n (t-i\epsilon)}{w_+}\right)+2i\alpha_nw_+(t-i\epsilon)-\alpha_n^2(t-i\epsilon)^2-i\alpha_n (t-i\epsilon)(c_n+a_n \xi_2-ib_n\eta_2)+\Phi(\theta_n s)+i\Psi(\theta_n s)\right\}}{(w_--\epsilon\sqrt{\frac{\tau_n}{2}}n^{1/6}) e^{i\theta_n s}-\tau_n (w_++i\alpha_n(t-i\epsilon))}\ud s \ud t\right|\\
% \leq&n^{-1/6}e^{\frac{1}{2}(\eta_1^2+\tau_n \eta_2^2)}\mathcal{O}\left(\frac{e^{-\epsilon T^2}}{\epsilon^2 T}\right).
%\end{align}
%($0<k<-\log{\left((6+e)/(4+2e)\right)}/2$).
\end{lemma}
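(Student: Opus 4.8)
The plan is to split the region $((\theta_n^{-1}\mathbb{T})\times\mathbb{R})\setminus(-T_n,T_n)^2$ into the three pieces $A_1 = (-T_n,T_n)\times(\mathbb{R}\setminus(-T_n,T_n))$, $A_2 = ((\theta_n^{-1}\mathbb{T})\setminus(-T_n,T_n))\times(-T_n,T_n)$, and $A_3 = ((\theta_n^{-1}\mathbb{T})\setminus(-T_n,T_n))\times(\mathbb{R}\setminus(-T_n,T_n))$, exactly as in the proof of Lemma \ref{globalgumbel}, and to bound the integral of $F_{r_1,w_+}^{\zeta_1,\zeta_2}(s,t-i\epsilon_n)$ over each piece separately, showing each is $n^{-1/6}o(1)$. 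The overall strategy is that the integrand decays like a Gaussian in each variable once $|s|$ or $|t|$ exceeds $T_n$ by a bounded amount; since $1\ll T_n$, truncating at $T_n$ costs only a factor $e^{-cT_n^2}$, which beats any polynomial loss coming from the denominator or from the measure of the region.

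First I would treat the $t$-integral, i.e. the contribution over $A_1$. Using the expansion (\ref{taylorairy}) of $\Xi_{w_+}^{\zeta_2}(\tilde t)$, the real part of the exponent in the $t$-direction behaves like $-\frac{1}{2}\sigma_n^2 t^2 - (\text{Im}\,\tilde t^3\text{ correction})$ near the saddle, but more importantly the \emph{global} control must come from $\mathrm{Re}\big(n\log(1+i\alpha_n t/w_+)\big) - \alpha_n^2 t^2$, which, after the substitution bringing the contour to $\mathbb{R}+i\epsilon_n$, is bounded above by $-\tfrac{1}{2}t^2(1+o(1))$ for $|t|$ up to order $\theta_n^{-1}\sim n^{1/3}$ — the same mechanism used in (\ref{dominated1}) and in the estimate of $I_{J_n^c}(s)$ in Lemma \ref{globalgumbel}. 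Combined with $|i(s-t)-2\epsilon_n|^{-1}$ being at worst $\epsilon_n^{-1}$ (and integrable in the region $|s-t|\gtrsim 1$), integrating $e^{-t^2/2}$ over $|t|>T_n$ against the bounded $s$-integral over $(-T_n,T_n)$ gives a bound of the shape $\frac{e^{-T_n^2/3}}{T_n}\cdot(\text{polynomial in }n)$, which is $n^{-1/6}o(1)$ because $T_n\gg 1$. The contribution over $A_2$ is handled the same way, now using Lemma \ref{trigfunction} to guarantee that $\Phi_{r_1}^{\zeta_1}(\theta_n s)$ decreases monotonically away from its global maximum (at $s$ within $\rho_n=o(1/\theta_n)$ of the origin), so that $\int_{|s|>T_n}e^{\Phi_{r_1}^{\zeta_1}(\theta_n s)}\,ds \leq C\,T_n^{-1}e^{-T_n^2/3}$ after the Gaussian approximation near the maximum; the $t$-integral over $(-T_n,T_n)$ is then bounded by the full Gaussian integral times $\log(T_n/\epsilon_n)$. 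Finally $A_3$ is dominated by the product of the two tail bounds and is therefore even smaller.

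The main obstacle is making the global (non-saddle) estimate on the $s$-integral genuinely uniform: one must invoke Lemma \ref{trigfunction} with $r_1 = w_-(1-\epsilon_n' n^{-1/3})$ — note the slight mismatch with $r_1 = w_--\tau_n\alpha_n\epsilon_n$ here, which is reconciled since $\tau_n\alpha_n\epsilon_n \asymp n^{1/6}\epsilon_n \gg n^{-1/3}$ exactly under the hypotheses $T_n^{-1}\gg\epsilon_n\gg T_n^{-2}$ and $T_n\ll n^{1/15}$ — and then split $\{|s|>T_n\}$ further into a near-maximum range $T_n<|s|<L_n$ (with $L_n = (n(1-\tau_n))^{1/5}$, where the quadratic Taylor term still dominates the cubic) and a far range $L_n<|s|<\pi/\theta_n$, on which one bounds $\Phi_{r_1}^{\zeta_1}(\theta_n s)\leq \Phi_{r_1}^{\zeta_1}(\theta_n L_n)$ by monotonicity and the denominator trivially by $|\,\tau_n w_+ - w_- + \dots|$ from below. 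This is the routine but delicate bookkeeping; everything else reduces to the Gaussian tail estimate $\int_{|x|>T_n}e^{-x^2/2}\,dx \leq T_n^{-1}e^{-T_n^2/2}$ and the observation that $\log(T_n/\epsilon_n)$ and all powers of $T_n$ and $n$ appearing are killed by $e^{-cT_n^2}$ since $T_n\to\infty$.
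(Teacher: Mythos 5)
Your plan is essentially the same argument the paper gives. The paper's proof of Lemma \ref{globalairy} begins by factorizing the integrand via the uniform lower bound on the denominator, $|r_1 e^{i\theta_n s}-\tau_n(w_++i\alpha_n\tilde t)|\geq \alpha_n\epsilon_n$ (equation (\ref{factorizedF})), so that the double integral over the complement of $(-T_n,T_n)^2$ splits into a product of a tail $t$-integral times a full $s$-integral plus a tail $s$-integral times a full $t$-integral; then each one-dimensional tail is shown to be $\asymp e^{-\epsilon_n T_n^2}/(\epsilon_n T_n)$ (plus exponentially negligible far-field contributions) and each full integral is $\asymp\epsilon_n^{-1/2}$. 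This is exactly the Gaussian-tail-beats-polynomial-loss mechanism you describe, and your $A_1\cup A_2\cup A_3$ covering of the complement is the same covering, just written out in three pieces rather than two. Your invocation of Lemma \ref{trigfunction} for the monotonicity of $\Phi_{r_1}^{\zeta_1}$ on the far range of $s$ is also what the paper uses, implicitly, in the step bounding $\int_{n^{1/12}<|s|<\pi/\theta_n}e^{\Phi}$ by $\frac{2\pi}{\theta_n}e^{\Phi(\theta_n n^{1/12})}$.

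One small inaccuracy worth flagging: when you reconcile $r_1=w_--\tau_n\alpha_n\epsilon_n$ with the form $r_1=w_-(1-\epsilon_n' n^{-1/3})$ required by Lemma \ref{trigfunction}, the relevant relative shift is $\tau_n\alpha_n\epsilon_n/w_-\asymp n^{-1/3}\epsilon_n$, so the shift parameter of Lemma \ref{trigfunction} is $\epsilon_n'=\epsilon_n$ itself, and the condition to verify is $\epsilon_n\gg n^{-1/3}$ (which does hold, since $\epsilon_n\gg T_n^{-2}\gg n^{-2/15}\gg n^{-1/3}$). The comparison you wrote, $\tau_n\alpha_n\epsilon_n\asymp n^{1/6}\epsilon_n\gg n^{-1/3}$, i.e.\ $\epsilon_n\gg n^{-1/2}$, is weaker than and not the condition actually needed; the conclusion is still correct, but for the reason above. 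Also, the ``denominator'' you quote, $i(s-t)-2\epsilon_n$, is the local approximation appearing in Lemma \ref{localairy}; the global factorization must use the actual denominator of $F$ and the lower bound $\alpha_n\epsilon_n$, which is where the overall $n^{-1/6}$ in the conclusion originates ($\alpha_n^{-1}\asymp n^{-1/6}$). Neither of these affects the correctness of the approach.
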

\begin{proof} 

Note that
\begin{align}\label{factorizedF}
\left|
F_{r_1,w_+}^{\zeta_1,\zeta_2}(s,t-i\epsilon_n)\right|
 =& \left|
\frac{\exp\left\{\Xi_{w_+}^{\zeta_2}(\tilde{t})+\Phi_{r_1}^{\zeta_1}(\theta_n s)
+i\Psi_{r_1}^{\zeta_1}(\theta_n s)\right\}}
{(w_- -\tau_n\alpha_n \epsilon_n)
 e^{i\theta_n s}-\tau_n (w_++i\alpha_n\tilde{t})}\right|\nonumber\\
 \leq&\frac{1}{\alpha_n\epsilon_n}e^{\operatorname{Re}\left(\Xi_{\epsilon_n}^{\zeta_2}(t)\right)} e^{\Phi_{r_1}^{\zeta_1}(\theta_n s)},
\end{align}
%\begin{align}\label{globalintegral1}
%\Xi_{\epsilon_n}^{\zeta_2}(t)&:=&\operatorname{Re}\left(n\log\left(1+\frac{i\alpha_n\tilde{t}}{w_+}\right)
%+2i\alpha_nw_+\tilde{t}-\alpha_n^2\tilde{t}^2
%-i\alpha_n\tilde{t}(c_n+a_n \xi_2-ib_n\eta_2)\right)
%\nonumber\\
% =& \frac{n}{2}\log\left(1+n^{-2/3}(\epsilon_n^2+t^2)+2\epsilon_n  n^{-1/3}\right)\nonumber\\
%&-\epsilon_n  n^{2/3}-\frac{n^{1/3}}{2\tau_n}(t^2-\epsilon_n ^2)- \epsilon_n  \xi_2-\sigma_n t \eta_2 ,\end{align}
so we estimate the integrals over $s$ and $t$ separately.

For $t=\mathcal{O}\left(n^{1/3}\right)$, we can estimate the logarithm
in $\operatorname{\Xi}_{w_+}^{\zeta_2}(\tilde{t})$ from above by its third 
order Taylor polynomial, giving
\begin{align}\label{globalintegral2}
\operatorname{Re}\left(\Xi_{w_+}^{\zeta_2}(\tilde{t})\right)
=&\frac{n}{2}\log\left(1+n^{-2/3}(\epsilon_n ^2+t^2)
+2\epsilon_n  n^{-1/3}\right) \nonumber\\
& -\epsilon_n  n^{2/3}
-\frac{n^{1/3}}{2\tau_n}(t^2-\epsilon_n ^2)
- \epsilon_n  \xi_2-\sigma_n t \eta_2\nonumber \\
%&\leq &
%-\epsilon_n  n^{2/3}-\frac{n^{1/3}}{2\tau_n}(t^2-\epsilon_n ^2)
%- \epsilon_n  \xi_2-\sigma_n t \eta_2 \nonumber\\
%&+\frac{n}{2}\left(\left(n^{-2/3}(\epsilon_n ^2+t^2)
%+2\epsilon_n n^{-1/3}\right)-\frac{1}{2}\left(n^{-2/3}(\epsilon_n^2+t^2)
%+2\epsilon_n n^{-1/3}\right)^2\right.\nonumber\\
%&\left.+\frac{1}{3}\left(n^{-2/3}(\epsilon_n^2+t^2)
%+2\epsilon_n n^{-1/3}\right)^3\right) \nonumber\\
\leq&
-\frac{1}{2}\left(\frac{\sigma_n t}{\sqrt{\tau_n}}
+\sqrt{\tau_n}\eta_2\right)^2+\frac{1}{2}\tau_n \eta_2^2
-\epsilon_n t^2+\frac{\sigma_n^2\epsilon_n^2}{2\tau_n}
-\epsilon_n \xi_2\nonumber\\
&-Q_n(t),
%&-\frac{n^{-1/3}t^4}{4}\left(1
%\frac{2\epsilon_n^2}{t^2}
%+\frac{\epsilon_n^4}{t^4}
%-\frac{2t^2}{3n^{2/3}}\left(1+\frac{\epsilon_n^2}{t^2}
%+\frac{2\epsilon_n n^{1/3}}{t^2}\right)^3\right).
\end{align}
%Consider the last term, bounded from above by 
%\begin{equation}
where 
\begin{equation*}Q_n(t)=-\frac{n^{-1/3}t^4}{4}\left(1-\frac{2t^2}{3n^{2/3}}\left(1+\frac{\epsilon_n^2}{t^2}+\frac{2\epsilon_n n^{1/3}}{t^2}\right)^3\right).
\end{equation*} 
To control $Q_n(t)$, we observe that if $\epsilon_n n^{1/3}T_n^{-2}\geq 1/4$, 
we can write  
\begin{equation*}
Q_n(t)\leq -\frac{n^{-1/3}t^4}{4}\left(1-C_1\frac{t^2}{n^{2/3}}
\frac{\epsilon_n^3 n}{t^6}\right)=-\frac{n^{-1/3}t^4}{4}+C_2\epsilon_n^3,
\end{equation*} 
which is either negative or of order $o(1)$. On the other hand, 
if $\epsilon_n n^{1/3}T_n^{-2}> 1/4$ we have the estimate  
\begin{equation*}
Q_n(t)\leq-\frac{n^{-1/3}t^4}{4}\left(1-\frac{2t^2}{3n^{2/3}}(1+1/3)\right)
\leq -\frac{n^{-1/3}t^4}{4}\left(1-\frac{8n^{2/3}}{9n^{2/3}}\right) < 0. 
\end{equation*}
In both cases it follows from (\ref{globalintegral2}) that
\begin{equation*}%\label{boundonxi}
\operatorname{Re}\left(\Xi_{w_+}^{\zeta_2}(\tilde{t})\right)\leq-\epsilon_n t^2+C_3
\end{equation*}
for $|t|\leq n^{1/3}$, so
\begin{equation}\label{boundonxi}
 \int_{T_n\leq |t|\leq n^{1/3}}e^{\operatorname{Re}\left(\Xi_{w_+}^{\zeta_2}(\tilde{t})\right)}\ud t\leq C\frac{e^{-\epsilon_n T_n^2}}{\epsilon_n T_n}.
\end{equation}

%\int_{T_n\leq |t|\leq n^{1/3}}
%\exp\left\{\frac{1}{2}\tau_n \eta_2^2-\epsilon_n t^2
%+o(1)\right\} \ud t\nonumber\\
We turn to an estimate for  $|t|>n^{1/3}$. Changing variables $u=n^{-1/3}t$ and letting $\delta=(e-2)/(2e)>0$ gives an estimate
\begin{align}\label{boundonxitails}
\int_{|t|>n^{1/3}}&e^{\operatorname{Re}\left(\Xi_{w_+}^{\zeta_2}(\tilde{t})\right)}\ud t\nonumber\\% =&\int_{|t|>n^{1/3}}\exp\left\{ \frac{n}{2}\log\left(1+n^{-2/3}(\epsilon_n^2+t^2)
%+2\epsilon_n n^{-1/3}\right)-\epsilon_n n^{2/3}\right.\nonumber\\
%&\left. -\frac{n^{1/3}}{2\tau_n}(t^2-\epsilon_n^2)
%-\epsilon_n \xi_2-\sigma_n t \eta_2\right\}\ud t\nonumber\\
 =&n^{1/3}\int_{|u|>1}\exp\left\{\frac{n}{2}\left(\log\left(1+u^2+n^{-2/3}\epsilon_n^2+2\epsilon_n n^{-1/3}\right)\right)\right.\nonumber\\
&\left.-\frac{nu^2}{2\tau_n}-\sigma_n n^{1/3}u \eta_2 -\epsilon_n n^{2/3}+\frac{n^{1/3}}{2\tau_n}\epsilon_n^2- \epsilon_n \xi_2 \right\}\ud u\nonumber \\
% \leq&n^{-1/3}\int_{|u|> 1}\exp\left\{\frac{n}{2}\left(\log(1+u^2)-\frac{1}{\tau_n}(u+\tau_n\sigma_n n^{-2/3}\eta_2)^2+ \tau_n\sigma_n^2n^{-4/3}\eta_2^2\right)\right.\nonumber\\
%&\left.+\frac{n^{1/3}\epsilon_n^2+2\epsilon_n n^{2/3}}{2(1+u^2)}-\epsilon_n n^{2/3}+\frac{n^{1/3}}{2\tau_n}\epsilon_n^2- \epsilon_n \xi_2 \right\}\ud u\nonumber\\
 \leq&2n^{1/3}\int_{u> 1}\exp\left\{\frac{n}{2}\left(\log(1+u^2)-(1-\delta)u^2+\frac{\tau_n\sigma_n^2 n^{-4/3}\eta_2^2}{\delta}\right)\right.\nonumber\\
&\left.-\frac{n^{2/3}\epsilon_n}{2}\left(1-\epsilon_n n^{-1/3}\left(\frac{2+\tau_n}{2\tau_n}\right)\right)- \epsilon_n \xi_2 \right\}\ud u\nonumber\\
% \leq&2 n^{1/3}\int_{u> 1}\exp\left\{\frac{n}{2}\left(\log(1+u^2)-(u-\delta)^2\right)+o(1)\right\}\ud u,
%\end{align}
%for any positive constant $\delta<1$ if $n$ is sufficiently large.
%Choose $\delta=\frac{e-2}{2e}$. Then
%Putting $v=u-\delta$,
%\begin{align}\label{boundonxitails}
%\int_{|t|>n^{1/3}}e^{\operatorname{Re}\left(\Xi_{w_+}^{\zeta_2}(\tilde{t})\right)}\ud t
 \leq&C_1n^{1/3}\int_{u> 1}\exp\left\{\frac{n}{2}\left(\log(2u^2)-(1-\delta)u^2\right)\right\}\ud u\nonumber\\
% \leq& \frac{n^{-1/3}}{2\epsilon_n}\int_{|u|>0}\exp\left\{\frac{n}{2}\left(\log(2u^2)-(1-\delta)u^2\right)\right\}\ud u(1+o(1))\\
% \leq&2n^{1/3}2^{n/2}\int_{u>0}u^n e^{-\frac{n(1-\delta)}{2}u^2}\ud u(1+o(1))\nonumber\\
 \leq&C_2n^{1/3}2^{n/2-1}\Gamma\left(\frac{n+1}{2}\right)
\left(\frac{2}{n(1-\delta)}\right)^{\frac{n+1}{2}}\nonumber\\
% =&\frac{\sqrt{2}\sqrt{\pi}n^{1/3}}{\sqrt{n+1}}
%\left(\frac{2(1+1/n)}{e(1-\delta)}\right)^{\frac{n+1}{2}}(1+o(1))\nonumber\\
 \leq&C_3n^{-1/6}\left(\frac{6+e}{4+2e}\right)^{\frac{n+1}{2}}\nonumber\\
 \leq&Cn^{-1/6}e^{-kn},
\end{align}
for some $k>0$.
Similarly, noting that 
$\Phi_{r_1}^{\zeta_1}(\theta_n s)\leq -\epsilon_n s^2+C_1\left(n^{-1/3}s^4+1\right)$ and recalling that $\epsilon_n >n^{-2/15}$, %if we require $b<5/9$,
\begin{align}\label{boundonphi}
\int_{T_n<|s|<\frac{\pi}{\theta_n}}e^{\Phi_{r_1}^{\zeta_1}(\theta_n s)}\ud s 
&\leq C_1\int_{T_n<|s|<\frac{\pi}{\theta_n}}
e^{\Phi_{r_1}^{\zeta_1}(\theta_n s)}\ud s\nonumber \\
 \leq C_1&\left(\frac{2\pi}{\theta_n} 
e^{\Phi_{r_1}^{\zeta_1}(\theta_n n^{1/{12}})}
+\int_{T_n<|s|<n^{1/{12}}}
e^{\Phi_{r_1}^{\zeta_1}(\theta_n s)}\ud s\right)\nonumber \\
 \leq C_2&\left(\frac{e^{-\epsilon_n n^{1/6}}}{\theta_n}+\int_{T_n<|s|<n^{1/{12}}}
e^{-\epsilon_n s^2}\ud s\right) \nonumber\\
 \leq C&
\left(n^{1/3}e^{-n^{1/30}}
+\frac{e^{-\epsilon_n T_n^2}}{\epsilon_n T_n}\right).
\end{align}
It is also clear from (\ref{boundonxi}) and (\ref{boundonxitails})  %and (\ref{boundonphi}) 
that 
\begin{equation*}
\int_{\mathbb{R}}
e^{\operatorname{Re}\left(\Xi_{w_+}^{\zeta_2}(\tilde{t})\right)}\ud t
%\asymp \int_{-T_n,T_n}}e^{\operatorname{Re}\left(\Xi_{w_+}^{\zeta_2}(\tilde{t})\right)}\ud t
\asymp\int_{\mathbb{R}}e^{-\epsilon_nt^2}\ud t\leq C\epsilon_n^{-1/2}
\end{equation*}
and similarly for the $s$-integral,
% \int_{\mathbb{R}}e^{\Phi_{r_1}^{\zeta_1}(\theta_n s)}\ud s\leq C\epsilon_n^{-1/2},
%\end{equation}
 so by (\ref{factorizedF}),
\begin{multline*}\label{tairy}
\left|\iint_{((\theta_n^{-1}\mathbb{T})\times\mathbb{R})\setminus (-T_n,T_n)}
F_{r_1,w_+}^{\zeta_1,\zeta_2}(s,t-i\epsilon_n)\ud t\ud s\right| \\
% \leq&C_1n^{-1/6}\left(\epsilon_n^{-1} n^{1/3}e^{-n^{1/30}}+\frac{e^{-\epsilon_n T_n^2}}{\epsilon_n^2 T_n}\right)
%\int_{(-T_n,T_n)}e^{\Xi_{\epsilon_n}^{\zeta_2}(t)}\ud t\nonumber\\
% \leq& C_1n^{-1/6}\left(\epsilon_n^{-1} n^{1/3}e^{-n^{1/30}}+\frac{e^{-\epsilon_n T_n^2}}{\epsilon_n^2 T_n}\right)\
%\int_{\mathbb{R}}e^{-\epsilon_n t^2}\ud t\nonumber\\
\leq Cn^{-1/6}\left(\epsilon_n^{-3/2} n^{1/3}e^{-n^{1/30}}+\epsilon_n^{-3/2}
e^{-kn}%\left(\frac{6+e}{4+2e}\right)^{\frac{n+1}{2}}
+\frac{e^{-\epsilon_n T_n^2}}{\epsilon_n^{5/2} T_n}\right)
=n^{-1/6}o(1),
\end{multline*}
which concludes the proof.

%and
%\begin{align}
%&\left|\iint_{(\theta_n^{-1}\mathbb{T})\times(\mathbb{R}\setminus(-T_n,T_n))}
%F_{r_1,w_+}^{\zeta_1,\zeta_2}(s,t-i\epsilon_n)\ud t\ud s\right| \nonumber\\
% \leq&C_1 n^{-1/6}\frac{e^{-\epsilon_n T_n^2}}{\epsilon_n^2 T_n}
%\int_{\mathbb{R}}e^{-\epsilon_n t^2}\ud t\nonumber\\
% \leq& Cn^{-1/6}\left(\epsilon_n^{-3/2}n^{-5/6}\left(\frac{6+e}{4+2e}\right)^{\frac{n+1}{2}}+\frac{e^{-\epsilon_n T_n^2}}{\epsilon_n^{5/2} T_n}\right).
%\end{align}
%Also, it is easy to see that
% \begin{equation}\label{tsairy}
%\left|\iint_{((\theta_n^{-1}\mathbb{T})\setminus(-T_n,T_n))\times(\mathbb{R}\setminus(-T_n,T_n))}
%F_{r_1,w_+}^{\zeta_1,\zeta_2}(s,t-i\epsilon_n)\ud t\ud s\right|
%o\left(\frac{n^{-1/6}e^{-\epsilon_n T_n^2}}{\epsilon_n^{5/2} T_n}\right)
%= o\left(\frac{n^{-1/6}e^{-\epsilon_n T_n^2}}{\epsilon_n^{5/2} T_n}\right), 
%\end{equation}
%and Equations (\ref{tairy}), (\ref{sairy}) and (\ref{tsairy}) imply (\ref{globalairylemma}). 
\end{proof}
%$(s,t)\in A_1:=\{(x,y): |x-s_0|\leq M_n, |y-t_0|\leq M_n\}$
Finally, we show that the contribution to the limiting integral from outside
$(-T_n,T_n)^2$ is small. 
%\right(2\tau_n)^{3/2}}3\sqrt{n}}(w_2-w_+)^3
\begin{lemma}\label{globalairy2}
%(Global estimates for limiting integral) Suppose $\sigma_n \to \sigma\geq 0$ 
%as $n\to \infty$, and 
Let $\epsilon_n$ and $T_n$ be as in Lemma (\ref{localairy}). 
Then, for any fixed $(\zeta_1,\zeta_2)\in \mathbb{R}^4$,   
\begin{equation*}
\left|\int_{\gamma\setminus \tilde{\gamma}_n}
\int_{\gamma\setminus \tilde{\gamma}_n}\frac{e^{-\frac{1}{2}(\sigma_n v-\eta_2)^2
+\frac{i}{3}v^3+i\xi_2v -\frac{1}{2}(\sigma_n u+\eta_1)^2+\frac{i}{3}u^3
+i\xi_1u}}{i(u+v)}\ud u \ud v\right|
=o(1)
%\leq C\frac{e^{-\epsilon_n T_n^2}}{\epsilon_n^{5/2} T_n}
\end{equation*}
\end{lemma}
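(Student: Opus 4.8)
The plan is to exploit the fact, from Cauchy's theorem, that the double integral does not depend on the common height of the two contours as long as these stay in the upper half-plane (the only singularity of the integrand, at $u=-v$, never meets such a contour); thus we may take $\gamma$ to be the line $\{t+i\epsilon_n:t\in\mathbb{R}\}$, so that $\tilde{\gamma}_n$ is exactly its restriction to $|t|<T_n$ and $\gamma\setminus\tilde{\gamma}_n$ is the pair of tails $|t|\ge T_n$. Parametrizing $u=t+i\epsilon_n$ and $v=s+i\epsilon_n$, the crucial observation is that the cubic terms already supply Gaussian decay in the real variable: a direct expansion gives
\begin{equation*}
\operatorname{Re}\Bigl(-\tfrac{1}{2}(\sigma_n u+\eta_1)^2+\tfrac{i}{3}u^3+i\xi_1 u\Bigr)=-\epsilon_n t^2-\tfrac{1}{2}(\sigma_n t+\eta_1)^2+\tfrac{\epsilon_n^3}{3}+\tfrac{1}{2}\sigma_n^2\epsilon_n^2-\xi_1\epsilon_n,
\end{equation*}
and the analogous identity for the $v$-part, with $-\tfrac{1}{2}(\sigma_n s-\eta_2)^2$ in place of $-\tfrac{1}{2}(\sigma_n t+\eta_1)^2$. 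Since $(\zeta_1,\zeta_2)$ is fixed, $\sigma_n$ is bounded and $\epsilon_n\to0$, all the $t$- and $s$-independent error terms tend to $0$, so for $n$ large the real part of the full exponent is at most $-\epsilon_n(t^2+s^2)+1$; combined with $|i(u+v)|=|(t+s)+2i\epsilon_n|\ge 2\epsilon_n$, this bounds the integrand on all of $\gamma\times\gamma$ by $C\epsilon_n^{-1}e^{-\epsilon_n(t^2+s^2)}$, with $C$ depending only on $\zeta_1,\zeta_2$.

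The rest is a routine Gaussian-tail computation. Put $R_n:=\sqrt{\epsilon_n}\,T_n$; the condition $T_n^{-2}\ll\epsilon_n$ from Lemma \ref{localairy} gives $R_n\to\infty$, whence, using $\int_{|w|\ge R}e^{-w^2}\,\ud w\le R^{-1}e^{-R^2}$,
\begin{equation*}
\int_{|t|\ge T_n}e^{-\epsilon_n t^2}\,\ud t=\frac{1}{\sqrt{\epsilon_n}}\int_{|w|\ge R_n}e^{-w^2}\,\ud w\le\frac{e^{-\epsilon_n T_n^2}}{\epsilon_n T_n}
\end{equation*}
for $n$ large. Consequently
\begin{equation*}
\left|\int_{\gamma\setminus\tilde{\gamma}_n}\int_{\gamma\setminus\tilde{\gamma}_n}\frac{e^{-\frac{1}{2}(\sigma_n v-\eta_2)^2+\frac{i}{3}v^3+i\xi_2 v-\frac{1}{2}(\sigma_n u+\eta_1)^2+\frac{i}{3}u^3+i\xi_1 u}}{i(u+v)}\,\ud u\,\ud v\right|\le\frac{C}{\epsilon_n}\left(\frac{e^{-\epsilon_n T_n^2}}{\epsilon_n T_n}\right)^{2}=\frac{C\,e^{-2\epsilon_n T_n^2}}{\epsilon_n^{3}T_n^{2}}.
\end{equation*}
It remains to check that this is $o(1)$ in the two parameter regimes. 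In case (i), $\epsilon_n^{3}T_n^{2}=n^{2k_1-3k_2}$ and $\epsilon_n T_n^{2}=n^{2k_1-k_2}$ with $2k_1-k_2>0$ (as $k_2<2k_1$), so $e^{-2\epsilon_n T_n^2}/(\epsilon_n^{3}T_n^{2})=n^{3k_2-2k_1}e^{-2n^{2k_1-k_2}}\to0$. In case (ii), $\epsilon_n^{3}T_n^{2}=\sigma_n^{3m_2-2m_1}$ with $3m_2-2m_1>m_1>0$ and $\epsilon_n T_n^{2}=\sigma_n^{m_2-2m_1}\to\infty$ (as $m_2<2m_1$, $\sigma_n\to0$), so $e^{-2\epsilon_n T_n^2}/(\epsilon_n^{3}T_n^{2})=\sigma_n^{2m_1-3m_2}e^{-2\sigma_n^{m_2-2m_1}}\to0$. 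In both cases the right-hand side tends to $0$, which is the assertion.

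No genuine obstacle is expected: the statement is a contour-tail estimate. The only point requiring care is the bookkeeping of the negative powers of $\epsilon_n$ — one from the singular denominator $i(u+v)$ and two more from the normalisations of the two Gaussian tails, a net $\epsilon_n^{-3}$ — which is precisely why the parameters are chosen so that $\epsilon_n T_n^{2}$ tends to infinity \emph{faster than any power} of $n$ (respectively of $\sigma_n^{-1}$), through the strict inequalities $k_2<2k_1$ and $m_2<2m_1$. Finally, although the statement records only the region with both $u$ and $v$ in $\gamma\setminus\tilde{\gamma}_n$, exactly the same computation — with one of the two tail integrals replaced by the full Gaussian $\int_{\mathbb{R}}e^{-\epsilon_n s^2}\,\ud s\asymp\epsilon_n^{-1/2}$ — bounds the mixed regions $(\gamma\setminus\tilde{\gamma}_n)\times\gamma$ and $\gamma\times(\gamma\setminus\tilde{\gamma}_n)$ by $C\epsilon_n^{-5/2}T_n^{-1}e^{-\epsilon_n T_n^2}$, again $o(1)$; this is what is actually needed in the proof of Theorem \ref{main}\,(ii) to pass from $\int_{\tilde{\gamma}_n}\int_{\tilde{\gamma}_n}$ to $\int_{\gamma}\int_{\gamma}$.
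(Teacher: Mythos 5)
Your proof is correct and takes essentially the same route as the paper: parametrize $\gamma$ as $t\mapsto t+i\epsilon_n$, bound the singular factor by $|i(u+v)|\ge 2\epsilon_n$, observe that the cubic term contributes $-\epsilon_n t^2$ to the real part of the exponent, and close with a Gaussian tail estimate. The paper in fact bounds the slightly larger region $(\gamma\setminus\tilde{\gamma}_n)\times\gamma$ (which is what Theorem \ref{main}\,(ii) actually needs) and invokes symmetry, whereas you prove the stated tails-times-tails estimate and point out in passing that the mixed regions follow by the same computation; you also spell out the final $o(1)$ verification in both parameter regimes more explicitly than the paper does.
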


\begin{proof}For any $u=s+i\epsilon$, %\in\mathbb{R}$, 
we can estimate
\begin{align*}
&\left|\int_{\gamma\setminus \tilde{\gamma}_n}
\frac{\exp\left\{-\frac{1}{2}(\sigma v-\eta_2)^2
+\frac{i}{3}v^3+i\xi_2v\right\}}{i(u+v)} \ud v\right|\nonumber\\
 =&\left|\int_{|t|\geq T_n}\frac{\exp\left\{-\frac{1}{2}(\sigma (t+i\epsilon_n)
-\eta_2)^2+\frac{i}{3}(t+i\epsilon_n)^3+i\xi_2(t+i\epsilon_n)\right\}}
{i(s+i\epsilon_n+t+i\epsilon_n)}\ud t\right|\nonumber\\
 \leq&\int_{|t|\geq T_n}\frac{\exp\left\{-\frac{1}{2}(\sigma t-\eta_2)^2
-\epsilon_n t^2+\frac{\epsilon_n^3}{3}+\frac{\sigma^2\epsilon_n^2}{2}
-\xi_2 \epsilon_n\right\}}{2\epsilon_n}\ud t\nonumber\\
 \leq&C_1\frac{1}{\epsilon_n}\int_{|t|\geq T_n}e^{-\epsilon_nt^2}\ud t
 \leq C\frac{e^{-\epsilon_n T_n^2}}{\epsilon_n^2 T_n}.
\end{align*}
Therefore 
\begin{equation*}
\left|\int_{\gamma\setminus \tilde{\gamma}_n}\int_{\gamma}\frac{e^{-\frac{1}{2}(\sigma v-\eta_2)^2+\frac{i}{3}v^3+i\xi_2v -\frac{1}{2}(\sigma u+\eta_1)^2+\frac{i}{3}u^3+i\xi_1u}}{i(u+v)}\ud u \ud v\right| 
% \leq&\frac{\exp\left\{-\epsilon_n T_n^2\right\}}{2\epsilon_n^2 T_n}\int_{|s|\leq T_n}\exp\left\{-\epsilon_ns^2\right\}\ud s(1+o(1))\nonumber\\
\leq C\frac{e^{-\epsilon_n T_n^2}}{\epsilon_n^{5/2} T_n},
\end{equation*}
and by the symmetry of the variables $u$ and $v$ the conclusion follows.

\end{proof}

\subsection{Proof of Theorem \ref{main}, part (ii)}\label{proofairy}
Using the estimates of Section \ref{estimatesairy} to prove the 
point-wise convergence of (a kernel equivalent to) $\tilde{K}_n^{\tau_n}$ to $M_{\sigma}$, and then 
proving that $\tilde{K}_n^{\tau_n}(\zeta,\zeta)$ is dominated
by an integrable function, we can apply Lemma 
\ref{convergenceoflastparticle} in this case as well.
\begin{proof}[Proof of Theorem \ref{main} (ii)]
Suppose $\sigma_n\to \sigma\in [0,\infty)$ as $n\to \infty$. 
Let $\delta_n=0$, $\alpha_n=(2\tau_n)^{-1/2}n^{1/6}$ and 
$\theta_n=n^{-1/3}$. In order to tidy up the calculations slightly, we 
will actually prove the theorem for 
 the choice of parameters $\tilde{a}=\tau_n n^{-2/3}$ and 
$\tilde{b}=\tau_n n^{-2/3}\sigma_n$, 
differing by a factor $\tau_n$ from the choice in the statement of the theorem; this 
clearly makes no difference in the limit. Thus 
$a_n=\sqrt{2\tau_n}n^{-1/6}$  
and $b_n=\sqrt{2\tau_n(1-\tau_n)}$ as defined by (\ref{defparameters}).  
Given $\epsilon_n$ and $T_n$ as in Lemmas 
\ref{localairy} and \ref{globalairy}, 
choose $r_1=w_--\tau_n\alpha_n\epsilon_n$ and 
$r_2=w_++\alpha_n\epsilon_n$. Let 
$(\zeta_1,\zeta_2)\in \mathbb{R}^4$ be fixed.

%Note!! Need to prove that\begin{equation} \int_{\gamma\setminus\tilde{\gamma}_n}\int_{\gamma\setminus\tilde{\gamma}_n}\frac{\exp\left\{-\frac{1}{2}(\sigma v-\eta_2)^2+\frac{i}{3}v^3+i\xi_2v -\frac{1}{2}(\sigma u+\eta_1)^2+\frac{i}{3}u^3+i\xi_1u\right\}}{i(u+v)}\ud u\ud v\ll 1,\end{equation}
%when $\sigma_n\to \sigma$, and corresponding estimates when $\sigma_n\to 0$.

By the estimates of Lemmas \ref{localairy}, 
\ref{globalairy} and \ref{globalairy2}, it follows 
that 
\begin{multline}
\frac{n^{1/6}}{(2\pi)^{5/2}}
\exp\left\{-\frac{1}{2}(\eta_1^2+\eta_2^2)+f_n(r_2)-f_n(w_+)
-\alpha_n \epsilon_n(a_n \xi_2-i b_n\eta_2)\right\}\\
\times\iint_{(-\pi/\theta,\pi/\theta)\times\mathbb{R}}
F_{r_1,r_2}^{\zeta_1,\zeta_2}(s,t)\ud t\ud s\\
=-M_{\sigma_n}(\zeta_1,\zeta_2)+ o(1).
% =&\int_{\gamma}\int_{\gamma}
%\frac{e^{-\frac{1}{2}(\sigma v-\eta_2)^2
%+\frac{i}{3}v^3+i\xi_2v -\frac{1}{2}(\sigma u+\eta_1)^2
%+\frac{i}{3}u^3+i\xi_1u}}{i(u+v)}\ud u\ud v+ o(1)\nonumber\\
%&+\mathcal{O}\left(\log n\left(n^{5k_1-1/3}
%+n^{k_1-k_2}\right)\right)
%+\mathcal{O}\left(\frac{e^{-\epsilon_n T_n^2}}
%{\epsilon_n^2 T_n}\right),
\end{multline}
%when $\sigma_n\to\sigma>0$ and
%\begin{align}
%&\sqrt{\frac{\tau_n}{2}}n^{1/6}\exp\left\{f_n(r_2)-f_n(w_+)
%-\alpha_n \epsilon_n(a_n \xi_2-i b_n\eta_2)\right\}\nonumber\\
%\times\iint_{(-\pi/\theta,\pi/\theta)\times\mathbb{R}}
%F_{r_1,r_2}^{\zeta_1,\zeta_2}(s,t)\ud t\ud s\nonumber\\
%\sqrt{\frac{\tau_n}{2}}n^{1/6}e^{-\frac{1}{2}(\eta_1^2+\eta_2^2)}\iint_{(-T_n,T_n)^2}F_{r_1,r_2}^{\zeta_1,\zeta_2}(s,t)\ud t\ud s\\
% =&\int_{\gamma}\int_{\gamma}
%\frac{e^{\frac{i}{3}v^3+i\xi_2v+\frac{i}{3}u^3+i\xi_1u}}{i(u+v)}
%\ud u\ud v\nonumber+o(1),
%\end{align}
%when $\sigma_n\to 0$.
%where the error term is 
%\begin{equation}
%\mathcal{O}\left(\log\left(\sigma_n\right)
%\left(\sigma_n^{1-2m_1}+\sigma_n^{-5m_1}n^{-1/3}+\sigma_n^{m_2-m_1}\right)\right)
%+\mathcal{O}\left(\frac{n^{-(2m_1-m_2)}}{n^{m_1-2m_2}}\right),
%\end{equation} 
%if $n^{-2/15}<\sigma_n\ll 1$ and
%\begin{equation}
%\left(\sigma_n n^{2k_1}+n^{5k_1-1/3}+n^{k_1-k_2}\right)
%\mathcal{O}\left(\log{n}\right)
%+\mathcal{O}\left(\frac{\sigma_n^{-(2k_1-k_2)}}{\sigma_n^{k_1-2k_2}}\right),
%\end{equation} 
%if $\sigma_n\leq n^{-2/15}$. 
%where for convenience we include the case $\sigma=0$ by 
%defining $M_{0}=M_{A}$.
%Note that, apart from the error terms, the 
%right hand sides of these estimates are independent of $\epsilon_n$ and $T_n$; 
%the contours $\gamma:t\mapsto  t+i\delta$ can be chosen for arbitrary 
%real $\delta\neq 0$, justifying.
Inserting into (\ref{rescaledkernel}) and comparing with the kernel 
$M_{\sigma_n}$, results in the expression
\begin{align*}
\tilde{K}_n^{\tau_n}&(\zeta_1,\zeta_2)/{M_{\sigma_n}(\zeta_1,\zeta_2)}\nonumber\\ 
%\left(\frac{1}{4\pi^{5/2}}\int_{\gamma}\int_{\gamma}\frac{e^{-\frac{1}{2}(\sigma v-\eta_2)^2
%+\frac{i}{3}v^3+i\xi_2v -\frac{1}{2}(\sigma u+\eta_1)^2
%+\frac{i}{3}u^3+i\xi_1u}}{i(u+v)}\ud u \ud v\right)^{-1}\nonumber\\
% =&\frac{r_1\theta_n\alpha_n a_nb_n\tau_n^{n+1}}
%{\sqrt{(1-\tau_n^2)}\sqrt{\frac{\tau_n}{2}}n^{1/6}}
%e^{-\frac{1}{2}(\eta_1^2+\eta_2^2)}
%\exp \left\{\frac{(c_n+a_n\xi_2)^2
%-\tau_n(c_n+a_n\xi_1)^2}{4(1+\tau_n)}\right.\nonumber\\
%&\left. -\frac{b_n^2(\tau_n\eta_1^2+\eta_2^2)}{4(1-\tau_n)} 
%-\frac{ib_n\eta_2(c_n+a_n\xi_2)}{2}+f_n(w_+)-f_n(r_1)\right.
%\nonumber\\
%&\left. -w_+(a_n\xi_2-ib_n\eta_2)
%+r_1(a_n\xi_1+ib_n\eta_1)\right\}(1+o(1))\nonumber\\
 =&%\frac{\tau_n\sqrt{2\tau_n}(1-\epsilon_n n^{-1/3})}
%{\sqrt{(1+\tau_n)}}
\exp \left\{ i\tau_n \sqrt{n(1-\tau_n)}(\eta_1-\eta_2)
+\frac{\tau_n n^{-1/3}}{2(1+\tau_n)}(\xi_2^2-\tau_n\xi_1^2)\right.
\nonumber\\
&\left. +\frac{1}{2}((1-\tau_n^2)\eta_1^2+(1-\tau_n)\eta_2^2)
+\frac{1}{2}\epsilon_n^2\sigma_n^2%+\mathcal{O}(\epsilon_n^3)
 \right\}(1+o(1))\nonumber\\
 =&\exp \left\{ i\tau_n \sqrt{n(1-\tau_n)}(\eta_1-\eta_2)\right\}(1+o(1)).
\end{align*}
Considering the equivalent kernel 
\begin{equation*}
{K_n^{\tau_n}}'(\zeta_1,\zeta_2)
=\exp\{-i\tau_n \sqrt{n(1-\tau_n)}(\eta_1-\eta_2)\}\tilde{K}_n^{\tau_n}(\zeta_1,\zeta_2),
\end{equation*}
we see that
\begin{equation*}
{K_n^{\tau_n}}'(\zeta_1,\zeta_2)=M_{\sigma_n}(\zeta_1,\zeta_2)(1+o(1))\to 
M_{\sigma}(\zeta_1,\zeta_2) \textrm{ as } n\to \infty.
%\frac{1}{4\pi^{5/2}}\int_{\gamma}\int_{\gamma}\frac{e^{-\frac{1}{2}(\sigma v-\eta_2)^2+\frac{i}{3}v^3+i\xi_2v -\frac{1}{2}(\sigma u+\eta_1)^2+\frac{i}{3}u^3+i\xi_1u}}{i(u+v)}\ud u \ud v(1+o(1)).
\end{equation*}

It remains to prove that $\tilde{K}_n^{\tau_n}(\zeta,\zeta)$ is dominated, 
for every sufficiently large $n$, by an integrable function on $(\xi_0,\infty)\times\mathbb{R}$.
This part of the proof is similar to the corresponding step in the proof 
of the first part of Theorem \ref{main} in Section \ref{proofgumbel}, 
although
the subtleties of that case are not present here.

Let $\epsilon<1$ be fixed and consider any $n$ so large that 
\begin{equation}\label{ncondition}
|\xi_0|<\epsilon n^{1/3}.%\frac{8(1+\sigma_n^2)n^{-1/3}}{a_n(1-\tau_n)}
\end{equation}
% and 
%\begin{equation}
%|\xi_0|<n^{1/3}(2\tau_n-1)\sqrt{\tau_n}2^{-1/2}
%\end{equation}
Choose 
\begin{equation*}
r_1=\sqrt{\frac{\tau_n n}{2}}(1-\epsilon_1)
\end{equation*}
and
\begin{equation*}
r_2=\sqrt{\frac{n}{2\tau_n}}(1+\epsilon_1),
\end{equation*}
where $\epsilon_1=(1+\sigma_n^2)n^{-1/3}$. % and $\epsilon_2=(1+\sigma_n^2) n^{-1/3}$. 
To estimate the integral 
\begin{equation*}
\iint_{\left(\theta_n^{-1}\mathbb{T}\right)
\times\mathbb{R}}F_{r_1,r_2}^{\zeta,\zeta}(s,t)\ud t\ud s,
\end{equation*}
replace the real line by the contour  
$\mathbb{R}+ia_n(1-\tau_n)\xi/{(4\alpha_n)}$ 
when integrating in $t$,
which is allowed by the condition (\ref{ncondition}) on $n$, and
put 
\begin{equation*}
t'=t+i\delta \xi := t+i\frac{a_n(1-\tau_n)\xi}{4\alpha_n}.
\end{equation*} 
This gives
\begin{multline}\label{dominatedairy1}\left|\iint_{\left(\theta_n^{-1}\mathbb{T}\right)
\times\mathbb{R}}F_{r_1,r_2}^{\zeta,\zeta}(s,t)\ud t\ud s\right|
% =&\left| \iint_{\left(\theta_n^{-1}\mathbb{T}\right)
%\times\mathbb{R}}F_{r_1,r_2}^{\zeta,\zeta}(s,t'-i\delta \xi)\ud t'\ud s
%\right|\nonumber\\
\leq \iint_{\left(\theta_n^{-1}\mathbb{T}\right)
\times\mathbb{R}}
\left|F_{r_1,r_2}^{\zeta,\zeta}(s,t'-i\delta\xi)\right|\ud t'\ud s\\
\leq \frac{1}{\left(\sqrt{2\tau_n n}\epsilon_1
-\tau_n\alpha_n \delta |\xi_0|\right)} \int_{\theta_n^{-1}\mathbb{T}}
e^{\Phi_{r_1}^{\zeta}(\theta_n s)}\ud s\int_{\mathbb{R}}
e^{\operatorname{Re}\left(\Xi_{r_2}^{\zeta}(t'-i\delta\xi)\right)}\ud t'.
\end{multline}

Using the inequality $\log(1+x)\leq x$ provides an estimate
\begin{align}\label{dominatedairy2}
\int_{\mathbb{R}}&e^{\operatorname{Re}\left(\Xi_{r_2}^{\zeta}(t'-i\delta\xi)\right)}\ud t'\nonumber\\
% =&\int_{\mathbb{R}}
%\exp\left\{n\log\left(1+\frac{i\alpha_n t'}{r_2+\alpha_n\delta \xi}\right)
%-\alpha_n^2t'^2-\sigma_n\eta t'\right.\nonumber\\
%&\left. -\alpha_n\delta(a_n-\alpha_n a_n\delta )\xi^2
%+n\log\left(1+\frac{\alpha_n \delta \xi}{r_2}\right)-\alpha_n \delta \xi(c_n-2 r_2)
%\right\}\ud t'\nonumber\\
% \leq&\int_{\mathbb{R}}
%e^{-\alpha_n^2t'^2\left(1-\frac{n}{2(r_2+\alpha_n\delta \xi)^2}\right)-\sigma_n\eta t' 
%-\alpha_n \delta(a_n-\alpha_n\delta)\xi^2
%+\alpha_n\delta(n/r_2-c_n+2r_2)\xi
%}\ud t'\nonumber\\
 \leq&\int_{\mathbb{R}}
\exp\left\{-t'^2\left(\frac{3\sigma_n^2+2-2(1+\sigma_n^2)\epsilon/r_2}{2\tau_n(1+\epsilon_1)}\right)-\sigma_n\eta t' \right.\nonumber\\
&\left.
-\alpha_n \delta (a_n-\alpha_n\delta)\xi^2
+\alpha_n\delta\left(\frac{n}{r_2}-c_n+2r_2\right)\xi\right\}\ud t'\nonumber\\
 \leq&%\sqrt{\frac{2\pi}{(3\sigma_n^2+2-\epsilon)}}
C\exp\left\{\frac{\eta^2}{6}(1+\epsilon) 
-\alpha_n \delta(a_n-\alpha_n\delta)\xi^2
+\alpha_n\delta(n/r_2-c_n+2r_2)\xi\right\}.
\end{align}
Suppose without loss of generality that $\eta \leq 0$. 
Then there is a 
$v_{\eta}\in [0,\pi/2)$ such that  ${\Phi_{w_-}^{\zeta}}'(v_{\eta})=0$ 
and since 
\begin{align*}
{\Phi_{r_1}^{\zeta}}''(v_{\eta})
 =&4r_1^2\cos 2v_{\eta}-r_1c_n' \cos v_{\eta}+r_1b_n\eta\sin v_{\eta}\nonumber\\
<&4r_1^2-r_1c_n'\nonumber\\
<&-n^{2/3}\sigma_n^2-\epsilon_1n(3\tau_n-1)
+\sqrt{n}a_n|\xi_0|\nonumber\\
<&-n^{2/3}(1+3\tau_n\sigma_n^2)<0,
\end{align*}
a saddle point argument shows that
\begin{multline*}
\int_{|\theta_n s|<\pi}e^{\Phi_{r_1}^{\zeta}(\theta_n s)}\ud s\\
 \leq C_1e^{\Phi_{w_-}^{\zeta}(v_{\eta})}\int_{\mathbb{R}}
\exp\left\{-\frac{(r_1c_n'-4r_1^2)\theta_n^2}{2}(s-v_{\eta}/\theta_n)^2\right\} \ud s
% \leq&C_2\sqrt{\frac{2\pi}{(1+3\tau_n\sigma_n^2)}}
%e^{\Phi_{w_-}^{\zeta}(v_{\eta})}\nonumber\\
 \leq Ce^{\Phi_{w_-}^{\zeta}(v_{\eta})}.
\end{multline*}
Consider the difference
\begin{multline*}
\frac{\eta^2}
{6}-\Phi_{r_1}^{\zeta}(v_{\eta})\\
=\frac{1}
{6}\left(\eta^2+6r_1b_n \eta\sin v_{\eta}
-6r_1c_n'\left((\cos v_{\eta}-1)+r_1^2(1-\cos2 v_{\eta})\right)\right)\\
\geq -\frac{3}{2}b_n^2r_1^2\sin^2 (v_{\eta})-r_1 c_n'(\cos v_{\eta}-1)-r_1^2(1-\cos2 v_{\eta})=:g(v_{\eta}).
\end{multline*}
Since 
\begin{align*}
 g'(v_{\eta}) =&
r_1^2(4-3b_n^2)\sin v_{\eta}\left(\frac{c_n'}{r_1(4-3b_n^2)}-\cos v_{\eta}\right)\nonumber\\
\geq& r_1^2(4-3b_n^2)\sin v_{\eta}\left(\frac{c_n-a_n|\xi_0|}{4r_1}-\cos v_{\eta}\right)\geq0,
\end{align*}
%and 
%\begin{equation}\frac{q}{4p-3b_n^2 r_1^2}\geq\frac{c_n-a_n|\xi_0|}{4r_1}>1,
%\end{equation} 
$g$ is increasing and positive on $[0,\pi/2)$, which shows that
\begin{equation}\label{dominatedairy3}
\int_{|\theta_n s|<\pi}e^{\Phi_{r_1}^{\zeta}(\theta_n s)}\ud s
\leq  Ce^{\eta^2/6}.
\end{equation}
Recall that $r_2$ is ``almost'' a saddle point of $f_n$, so 
\begin{equation*}\left|\frac{n}{r_2}-c_n+2r_2\right|=\left|\frac{\sqrt{2n}\epsilon_1((1-\tau_n)+\epsilon_1)}{\sqrt{\tau_n}(1+\epsilon_1)}\right|\leq 6\sqrt{2}(1+\sigma_n^4)n^{-1/6}.
\end{equation*} 
Therefore, by (\ref{rescaledkernel}) and the estimates (\ref{dominatedairy1}), (\ref{dominatedairy2}) and (\ref{dominatedairy3}),
\begin{align*}
\tilde{K}_n^{\tau_n}(\zeta_1,\zeta_2)
% \leq&\frac{\sqrt{\tau_n}n^{1/6}}{4\pi^{5/2}\sqrt{(1+\tau_n)}}
%\exp \left\{\frac{a_n^2(1-\tau_n)\xi^2}{4(1+\tau_n)}
%+\frac{(1-\tau_n)a_n c_n}{2(1+\tau_n)}\xi \right.\nonumber\\
%&\left.+n\log(1+\epsilon_1)-n\log (1-\epsilon_1)-2n\epsilon_1
%+\frac{n(1-\tau_n^2)}{2\tau_n}\epsilon_1^2\right.\nonumber\\
%&\left.+a_n\xi\left(r_1-r_2\right) 
%-\frac{b_n^2(1+\tau_n)}{4(1-\tau_n)}\eta^2\right\}\nonumber\\&
%\times\iint_{\left(\theta_n^{-1}\mathbb{T}\right)
%\times\mathbb{R}}F_{r_1,r_2}^{\zeta,\zeta}(s,t)\ud t\ud s\nonumber\\
 \leq&% \frac{6\pi n^{-1/3}}{2(1+3\tau_n \sigma_n^2)\sqrt{2}\pi^{3/2}\sqrt{(1+\tau_n)}2\epsilon_1(1-\epsilon)}
%\exp \left\{\frac{n(1-\tau_n^2)}{2\tau_n}\epsilon_1^2\right.\nonumber\\
%&\left.+ \frac{8 n\epsilon_1^3}{3(1-\epsilon_1)^3}
C_{\sigma}\exp\left\{a_n^2(1-\tau_n)\left(\frac{1}{4(1+\tau_n)}
+\frac{(1-\tau_n)}{16}-\frac{1}{4}\right) \xi^2\right\}\nonumber\\
&\times\exp\left\{-\left((1+\tau_n)(1+\sigma_n^2)
+\alpha_n\delta(n/r_2-c_n+2r_2)\right)\xi\right\}\nonumber\\
&\times\exp\left\{-\left(\frac{\tau_n(1+\tau_n)}{2}
-\frac{2+\epsilon}{6}\right)\eta^2\right\}\nonumber\\
% \leq& \frac{3\exp \left\{\frac{\sigma_n^2(1+\sigma_n^2)^2(1+\tau_n)}{2\tau_n}+ \frac{8(1+\sigma_n^2)^3}{3(1-\epsilon_1)^3}+(1+\sigma_n^2)|\xi_0|\right\}}{2(1+3\tau_n \sigma_n^2)(1+\sigma_n^2)\sqrt{2\pi(1+\tau_n)}(1-\epsilon)}\nonumber\\
%&\times \exp \left\{-\left(1+\sigma_n^2\right)\xi-\left(\frac{\tau_n(1+\tau_n)}{2}-\frac{2+\epsilon}{6}\right)\eta^2\right\}\nonumber\\
 \leq& C_{\sigma, \xi_0}e^{-\xi-\eta^2/2},
%-3n^{-2/3}\sigma_n^2(1+\sigma_n^4)
% \leq& \frac{n^{-1/3}}{\sqrt{2}\pi^{3/2}\sqrt{(1+\tau_n)}(\epsilon_1+\epsilon_1)}\exp \left\{\frac{n(1-\tau_n)}{2\tau_n}(\epsilon_1^2+\tau_n\epsilon_1^2)\right.\nonumber\\
%&\left.+ \frac{n(\epsilon_1+\epsilon_1)^3}{3(1-\epsilon_1)^3}-\frac{n^{1/3}}{\tau_n}\left((1-\tau_n)+\tau_n\epsilon_1+\epsilon_1\right)\xi\right.\nonumber\\
%&\left.+\frac{a_n^2(1-\tau_n)\xi^2}{4(1+\tau_n)} -\frac{\tau_n(1+\tau_n)}{2}\eta^2\right\}\nonumber\\
%&\times \int_{\theta_n^{-1}\mathbb{T}}
%e^{\Phi_{r_1}^{\zeta}(\theta_n s)}\ud s \int_{\mathbb{R}}
%e^{\frac{n}{2}\log\left|1+\frac{2\tau_n\alpha_n^2 t^2}{n(1+\epsilon_1)^2}\right|
%-\alpha_n^2t^2-\alpha_nb_n\eta t}\ud t.
\end{align*}
which is an integrable function on $(\xi_0,\infty)\times \mathbb{R}$. 
It follows from Lemma (\ref{convergenceoflastparticle}) and 
the point-wise convergence 
of ${K_n^{\tau_n}}'$ to $M_{\sigma}$ that  $\tilde{Z}_n^{\tau_n}$ 
converges weakly to $Z_{\sigma}$ 
%or to  $Z_{A}$ (if $\sigma_n \to 0$), 
and that the last particle distribution $F_n^{\tau_n}(t)$ converges to $F_{\sigma}(t)$.
\end{proof}

%\paragraph{Acknowledgment}

\emph{Acknowledgement:} I would like to express my sincere 
gratitude to Kurt Johansson for all his support and advice.
\newpage
%I am also grateful to the Wallenberg foundation for financial support.
\appendix
\section{}\label{Preliminaries}

 For sequences of determinantal processes, the following criterion relates 
weak convergence to convergence of the correlation kernels.
\begin{lemma}\label{convergencecriterion}
For each $n\geq 1$, let $X_n$ be a determinantal point processes on a complete 
separable metric space $\Lambda$, with correlation kernel $K_n$. If 
$K:\Lambda^2 \to \mathbb{C}$ is a function such that for every 
compact set $A\subseteq \Lambda$, 
\begin{equation}
\label{convergencecrit}
\sum_{k=0}^{\infty}\frac{1}{k!}\int_{A^k}
\left|\det\left(K_n(\xi_i,\xi_j)\right)_{i,j=1}^k
- \det\left(K(\xi_i,\xi_j)\right)_{i,j=1}^k\right|
\ud^k\lambda(\xi)  \to 0 \textrm{ as } n\to \infty,
\end{equation}
then $K$ is the correlation kernel of a determinantal 
point process $X$, and $X_n$ converges weakly to $X$ as $n\to \infty$.
%In particular, if a Hermitian correlation kernel $K_n$ converges point-wise to $K$ and, for every bounded Borel set $A\subset\Lambda$, there is a measurable function $B:\Lambda \to \mathbb{R}$ such that $|K_n(x,x)|\leq B(x)$ and 
%\begin{equation}\int_{A}B(x)\ud\lambda(x)<\infty,\end{equation}
%then $N_n$ converges weakly to a determinantal point process with correlation kernel $K$.
%Ta bort-----
%(In particular, if for every compact set $A\subset \Lambda^2$, there is a sequence $a_n\downarrow 0$ such that   \begin{equation}\label{uniformlyoncompacts}|K_n(x_1,x_2)-K(x_1,x_2)|\leq a_n|K(x_1,x_2)|\end{equation} for all $(x_1,x_2)\in A$, then $N_n$ converges weakly to $N$.)
%If, in addition, the point processes $N_n$ and $N$ all have last particles almost surely with distributions given by Equation (\ref{lastparticle}), the correlation kernels $K_n$ are Hermitian, and for each $t\in \mathbb{R}$ there is a constant $C_t<\infty$ such that
%\begin{equation} \int_{(t,\infty)\times \mathbb{R}}K_n(z,z)\ud\lambda(z)<C_t,
%\end{equation} 
%then the last particle of $N_n$ converges in distribution to that of $N$ as $n\to \infty$.
\end{lemma}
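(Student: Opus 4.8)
The plan is to pass to the limit in the generating-functional identity (\ref{defcorrelation2}). The hypothesis is tailored precisely to allow this, as long as one only tests against functions bounded by one, and that is exactly the regime in which weak convergence of simple point processes can be read off; the rest is standard point-process theory.

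Fix a finite family $A_1,\dots,A_m$ of disjoint bounded Borel subsets of $\Lambda$, all contained in a common compact set $A$ (since $\Lambda=\mathbb{R}$ or $\mathbb{R}^2$, bounded sets are relatively compact), fix $s=(s_1,\dots,s_m)\in[0,1]^m$, and put $\phi=\sum_{\ell=1}^m(s_\ell-1)\mathbb{1}_{A_\ell}$, so that $1+\phi\geq 0$, $\phi$ is bounded with compact support, and $|\phi|\leq 1$. Since the correlation functions of $X_n$ are $\det(K_n(\xi_i,\xi_j))_{i,j=1}^{k}$, identity (\ref{defcorrelation2}) gives
\begin{equation*}
p_n(s):=\mathbb{E}\Bigl[\prod_{\ell=1}^{m}s_\ell^{\,|X_n\cap A_\ell|}\Bigr]
=\sum_{k=0}^{\infty}\frac{1}{k!}\int_{A^k}\Bigl(\prod_{j=1}^{k}\phi(\xi_j)\Bigr)\det\bigl(K_n(\xi_i,\xi_j)\bigr)_{i,j=1}^{k}\,\ud^k\lambda(\xi),
\end{equation*}
the probability generating function of the $\mathbb{Z}_{\geq 0}^m$-valued vector $(|X_n\cap A_\ell|)_{\ell=1}^m$. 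Because $|\prod_j\phi(\xi_j)|\leq 1$, the triangle inequality through $\det(K)$ together with the hypothesis yields $|p_n(s)-p_{n'}(s)|\leq\varepsilon_n+\varepsilon_{n'}$, where $\varepsilon_n:=\sum_k\frac1{k!}\int_{A^k}|\det(K_n)-\det(K)|\,\ud^k\lambda\to 0$, and this bound is uniform in $s\in[0,1]^m$. Moreover the limiting series converges absolutely: correlation functions are non-negative, so $\sum_k\frac1{k!}\int_{A^k}|\det(K_n)|=\sum_k\frac1{k!}\int_{A^k}\det(K_n)=\mathbb{E}\bigl[2^{|X_n\cap A|}\bigr]<\infty$ for the determinantal process $X_n$, and taking $n$ with $\varepsilon_n<1$ gives $\sum_k\frac1{k!}\int_{A^k}|\det(K)|<\infty$. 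Hence $p_n(s)\to g(s):=\sum_k\frac1{k!}\int_{A^k}\bigl(\prod_j\phi(\xi_j)\bigr)\det(K(\xi_i,\xi_j))\,\ud^k\lambda$ uniformly on $[0,1]^m$, and $g$ --- a power series in $s$ with absolutely summable coefficients --- is continuous on $[0,1]^m$ with $g(1,\dots,1)=1$ (only the $k=0$ term survives).

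Thus the probability generating functions of the vectors $(|X_n\cap A_\ell|)_\ell$ converge pointwise on $[0,1]^m$ to $g$, which is continuous at $(1,\dots,1)$; by the continuity theorem for generating functions of integer-valued vectors these vectors converge in distribution, to a vector whose generating function is $g$. As this holds for every finite disjoint family of bounded Borel sets, the finite-dimensional distributions of $X_n$ converge, so by the equivalence recalled in Section \ref{determinantalpointprocesses} the processes $X_n$ converge weakly in $\mathcal{N}(\Lambda)$ to a point process $X$. Finally, the generating functional of $X$ evaluated at such $\phi$ is $g$, i.e.\ the right-hand side of (\ref{defcorrelation2}) with $\rho_k=\det(K(\xi_i,\xi_j))_{i,j=1}^{k}$; reading off the coefficients (as the $A_\ell$ range over a generating class) shows that the $k$-th factorial moment measure of $X$ is absolutely continuous with density $\det(K(\xi_i,\xi_j))_{i,j=1}^{k}$ for every $k$, that is, $X$ is determinantal with correlation kernel $K$.

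The determinantal input is confined to the single identity (\ref{defcorrelation2}); the only genuine obstacle is the soft point-process bookkeeping --- invoking the continuity theorem (equivalently, ruling out escape of mass to infinity), using the finite-dimensional characterisation of weak convergence, and recovering the correlation functions from the generating functional --- all standard (see \cite{Daley} and \cite{Johansson}) but to be assembled with care. The one point to watch is that the hypothesis controls the Fredholm-type series only with the weight $1/k!$ and no geometric factor, so every test function must be kept bounded by $1$; this is why void probabilities, and more generally $\phi=\sum_\ell(s_\ell-1)\mathbb{1}_{A_\ell}$ with $s_\ell\in[0,1]$, are exactly the right test objects.
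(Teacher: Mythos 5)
Your proof is correct and takes essentially the same route as the paper: you pass to the limit in the generating functional (\ref{defcorrelation2}) against test functions $\phi$ with $|\phi|\leq1$, exploit the hypothesis to get convergence of the resulting transforms, and invoke a continuity theorem to conclude convergence of finite-dimensional distributions. The paper uses the Laplace transform $\Psi_n(t)=\mathbb{E}_n[\exp\{-\sum_j t_j|X_n\cap A_j|\}]$ with $t\in[-\ln 2,\infty)^k$, which is exactly your probability generating function under the substitution $s_j=e^{-t_j}$; the only cosmetic difference is that you identify the correlation functions of the limit by reading off coefficients of the generating functional, whereas the paper appeals to the consistency conditions in Soshnikov.
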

\begin{proof} Let $\{A_i\}_{j=1}^k$ be a family of bounded disjoint Borel sets in 
$\Lambda$ and $t=(t_1,\ldots,t_k)\in [-\ln 2,\infty)^k$ be given. 
Define the function 
$\phi_t(s) =\sum_{j=1}^k(e^{-t_i}-1)\chi_{A_j}(s)$. The support of $\phi$ is the 
compact set $A=\overline{\cup_{j=1}^k A_j}$, and $|\phi(s)|\leq 1$.
The Laplace transform of the $k$-dimensional distributions of $X_n$ can then 
be written
% $(|X_n\cap A_1|,\ldots,|X_n\cap A_k|)$, 
%under $N_n$: %(and similarly under $N$):
\begin{align}
\Psi_n(t):
=&\mathbb{E}_n\left[\exp\left\{-\sum_{j=1}^kt_j|X_n\cap A_j|\right\}\right]\nonumber\\
% =&\mathbb{E}_n\left[\prod_{m}\prod_{j=1}^k\exp\{- t_j \chi_{A_j}(x_m)\}\right]\nonumber\\
% =&\mathbb{E}_n\left[\prod_{m}\prod_{j=1}^k\left(1-\chi_{A_j}(x_m)(1-e^{-t_j})\right)\right]\nonumber\\
% =&\mathbb{E}_n\left[\prod_{m}\left(1-\sum_{j=1}^k(1-e^{-t_j})\chi_{A_j}(x_m)\right)\right]\nonumber\\
 =&\mathbb{E}_n\left[\prod_{m}\left(1+\phi_t(x_m)\right)\right]\nonumber\\
 =&\sum_{r=0}^{\infty}\frac{1}{r!}\int_{\Lambda^r}\prod_{j=1}^r\phi_t(\xi_j)\rho_r^n(\xi_1,\ldots,\xi_r)\ud^r\lambda(\xi)\nonumber\\
 =&\sum_{r=0}^{\infty}\frac{1}{r!}\int_{\Lambda^r}\prod_{j=1}^r\phi_t(\xi_j)\det\left(K_n(\xi_i,\xi_j)\right)_{i,j=1}^r \ud^r\lambda(\xi)\nonumber\\
&\to\sum_{r=0}^{\infty}\frac{1}{r!}\int_{\Lambda^r}\prod_{j=1}^r\phi_t(\xi_j)\det\left(K(\xi_i,\xi_j)\right)_{i,j=1}^r \ud^r\lambda(\xi)\nonumber
\end{align}
as $n \to \infty$, by (\ref{defcorrelation2}) and the hypothesis.
%It follows that 
%\begin{align} |\Psi(t)-\Psi_n(t)| =&\left|\sum_{r=0}^{\infty}\frac{1}{r!}\int_{\Lambda^r}\prod_{j=1}^r\phi_t(x_j)\left(\det\left(K(x_i,x_j)\right)_{i,j=1}^r\right.\right.\nonumber\\
%&\left.\left.-\det\left(K_n(x_i,x_j)\right)_{i,j=1}^r\right) \ud^r\lambda(x)\right|\nonumber\\
% \leq& \sum_{r=0}^{\infty}\frac{1}{r!}\int_{A^r}\left|\det\left(K(x_i,x_j)\right)_{i,j=1}^r-\det\left(K_n(x_i,x_j)\right)_{i,j=1}^r \right|\ud^r\lambda(x)\nonumber\\
%&\to& 0 \textrm{ as } n \to \infty
%\end{align}
%by hypothesis. 
Convergence of the Laplace transforms for all $t$ in a neighbourhood 
of the origin 
implies convergence of the joint distributions. The existence of 
a determinantal 
point process with correlation kernel $K$ is an immediate consequence 
of the fact 
that the $K_n$ all are correlation kernels; the consistency conditions 
 e.g. in  \cite{Soshnikov} are easily seen to be satisfied. 

%Ta bort:---------
 %It is easy to verify, by means of Hadamard's inequality and the identity 
%\begin{equation}\label{determinantidentity} \det\left(I+(a_{jk})_{j,k=1}^r\right)=1+\sum_{k=1}^r\frac{1}{k!}\sum_{j_1,\ldots,j_k=1}^r\det(a_{j_s j_t})_{s,t=1}^k,\end{equation} that Equation (\ref{convergencecrit}) is satisfied if $K_n$ converges uniformly on compact subsets as in Equation (\ref{uniformlyoncompacts}). For details, we refer to the (very similar) proof of Lemma \ref{convergencetopoisson}. 
%-------
\end{proof}

\begin{lemma}\label{lastparticlelemma} 
Let $Z$ be a determinantal point process on  $\mathbb{R}^2$ with a Hermitian 
correlation kernel $K$ and suppose that
\begin{equation*}
\int_{(\xi_0,\infty)\times \mathbb{R}}K(\zeta,\zeta)\ud \zeta<\infty
\end{equation*}
for every $\xi_0\in \mathbb{R}$. Then $Z$ has a last particle almost surely and the 
distribution function $F$ of the last particle %, defined by
%\begin{equation}
%F(t)=N\left[\mu\left((t,\infty)\times \mathbb{R}\right)=0\right]\end{equation}
 is given by
\begin{equation}\label{lastparticle2}
F(t)=\sum_{r=0}^{\infty}\frac{(-1)^r}{r!}\int_{\left((t,\infty)\times \mathbb{R}\right)^r}\det (K(\zeta_i,\zeta_j))_{i,j=1}^{r}\ud^r \zeta.
\end{equation}
If $K$ defines a locally trace class integral operator on $L^2(\mathbb{R}^2)$, $F(t)$ can be more compactly expressed as a Fredholm determinant
\begin{equation}F(t)=\det(I-K)_{L^2\left((t,\infty)\times\mathbb{R}\right)}.
\end{equation}
\end{lemma}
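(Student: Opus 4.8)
The plan is to derive the series (\ref{lastparticle2}) from the generating-function identity (\ref{defcorrelation2}) by approximating the indicator of $(t,\infty)\times\mathbb{R}$ by indicators of boxes and passing to the limit, and then to recognize the resulting series as a Fredholm determinant under the additional trace-class hypothesis. That $Z$ has a last particle almost surely is immediate from the definition preceding the lemma: by (\ref{defcorrelation1}), $\mathbb{E}\big[|Z\cap((\xi_0,\infty)\times\mathbb{R})|\big]=\int_{(\xi_0,\infty)\times\mathbb{R}}\rho_1(\zeta)\,\ud\zeta=\int_{(\xi_0,\infty)\times\mathbb{R}}K(\zeta,\zeta)\,\ud\zeta<\infty$; intersecting over $\xi_0=-m$, $m\in\mathbb{N}$, one moreover sees that almost every realization has only finitely many points to the right of each vertical line, so $F$ really is the distribution function of the largest abscissa.

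The technical backbone will be a Hadamard-type bound. Because every correlation function $\rho_k(\zeta_1,\dots,\zeta_k)=\det(K(\zeta_i,\zeta_j))_{i,j=1}^k$ is nonnegative almost everywhere and $K$ is Hermitian, a Fubini argument shows that for a.e.\ $(\zeta_1,\dots,\zeta_r)$ every principal minor of the Hermitian matrix $(K(\zeta_i,\zeta_j))_{i,j=1}^r$ is nonnegative; hence that matrix is positive semidefinite a.e., and therefore
\[
0\le\det\big(K(\zeta_i,\zeta_j)\big)_{i,j=1}^r\le\prod_{i=1}^r K(\zeta_i,\zeta_i)\qquad\text{a.e.}
\]
Consequently, for any Borel set $B$ with $C_B:=\int_B K(\zeta,\zeta)\,\ud\zeta<\infty$ one has $\int_{B^r}\big|\det(K(\zeta_i,\zeta_j))_{i,j=1}^r\big|\,\ud^r\zeta\le C_B^{\,r}$, so the associated series is dominated by $\sum_r C_B^{\,r}/r!=e^{C_B}$, uniformly in all such $B$.

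Now fix $t$, and for $L>t$ put $B_L=(t,L)\times(-L,L)$ and $\phi_L=-\chi_{B_L}$, a bounded function with bounded support. The left-hand side of (\ref{defcorrelation2}) applied to $\phi_L$ equals $\mathbb{E}\big[\prod_j(1-\chi_{B_L}(x_j))\big]=\mathbb{P}\big[\,|Z\cap B_L|=0\,\big]$, since each factor is $0$ or $1$; the right-hand side equals $\sum_{r=0}^{\infty}\frac{(-1)^r}{r!}\int_{B_L^r}\det(K(\zeta_i,\zeta_j))_{i,j=1}^r\,\ud^r\zeta$. Letting $L\to\infty$, the left-hand side tends to $F(t)$ by continuity from above of $\mathbb{P}$, as the events $\{|Z\cap B_L|=0\}$ decrease to $\{|Z\cap((t,\infty)\times\mathbb{R})|=0\}$; on the right-hand side, the bound of the previous paragraph with $B=(t,\infty)\times\mathbb{R}$ (so $C_B=C_t<\infty$) furnishes an $L$-independent summable majorant, so dominated convergence, first in the summation index $r$ and then inside each integral as $B_L^r\uparrow((t,\infty)\times\mathbb{R})^r$, yields exactly (\ref{lastparticle2}).

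Finally, assume $K$ defines a locally trace class operator on $L^2(\mathbb{R}^2)$. Its restriction to $L^2((t,\infty)\times\mathbb{R})$ is positive semidefinite (the finite-matrix positivity above passes to the integral operator by a routine density argument) and has finite trace $\int_{(t,\infty)\times\mathbb{R}}K(\zeta,\zeta)\,\ud\zeta$, hence is trace class; the series (\ref{lastparticle2}) is then precisely the Plemelj--Fredholm expansion of $\det(I-K)_{L^2((t,\infty)\times\mathbb{R})}$, which finishes the proof. The one genuinely delicate point is the Hadamard bound: extracting a.e.\ positive semidefiniteness of the matrices $(K(\zeta_i,\zeta_j))$ from the mere a.e.\ nonnegativity of the $\rho_k$ is what makes the interchange of $\lim_{L\to\infty}$ with the infinite sum legitimate, whereas the combinatorial identity for $\mathbb{E}[\prod(1+\phi)]$, the monotone limit on the probabilistic side, and the identification of the Fredholm series are all standard.
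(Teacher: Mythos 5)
Your proof is correct and follows essentially the same route as the paper's: apply the identity \eqref{defcorrelation2} with $\phi=-\chi_{B}$ on an exhausting sequence of boxes $B_L$, bound the terms via Hadamard's inequality for positive semidefinite Hermitian matrices, and pass to the limit $L\to\infty$ by dominated convergence. Your treatment of the positive-semidefiniteness of $(K(\zeta_i,\zeta_j))_{i,j}$ (deducing it a.e.\ from nonnegativity of all the correlation functions $\rho_k$, rather than from Hermiticity alone, as the paper loosely asserts) is a genuine and welcome tightening, but it does not change the overall strategy.
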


\begin{proof}
Let $t\in \mathbb{R}$ be given and suppose $s>t$. % put 
%\begin{equation*}
%f_s(r)
%=\frac{(-1)^r}{r!}\int_{\left((t,s)\times (-s,s)\right)^r}
%\det (K(\zeta_i,\zeta_j))_{i,j=1}^{r}\ud^r\zeta,
%\end{equation*} 
%for $s>t$. Since correlation functions are non-negative, 
%\begin{equation*}
%|f_s(r)|\leq \frac{1}{r!}\int_{(t,\infty)\times \mathbb{R}}
%\det (K(\zeta_i,\zeta_j))_{i,j=1}^{r}\ud^r\zeta=:g(r).
%\end{equation*} 
Applying (\ref{defcorrelation2}), with 
$\phi(s,t)=\chi_{\left((t,s)\times(-s,s)\right)}$, gives
\begin{multline*}
\mathbb{P}\left[|Z\cap\left((t,s)\times (-s,s)\right)|=0\right]\\
=\sum_{r=0}^{\infty}\frac{(-1)^r}{r!}\int_{\left((t,s)\times (-s,s)\right)^r}
\det (K(\zeta_i,\zeta_j))_{i,j=1}^{r}\ud^r\zeta. %\sum_{r=0}^{\infty}f_s(r)
\end{multline*}
Because $K$ is Hermitian,  the matrix 
$ (K(\zeta_i,\zeta_j))_{i,j=1}^{r}$ is positive definite, 
so by Hadamard's inequality 
\begin{align*}
%&\left|\sum_{r=0}^{\infty}\frac{(-1)^r}{r!}\int_{\left((t,s)\times (-s,s)\right)^r}
%\det (K(\zeta_i,\zeta_j))_{i,j=1}^{r}\ud^r\zeta\right|\nonumber\\
%\mathbb{P}\left[|Z\cap\left((t,s)\times (-s,s)\right)|=0\right]
%=\sum_{r=0}^{\infty}\frac{(-1)^r}{r!}&\int_{\left((t,s)\times (-s,s)\right)^r}
%\det (K(\zeta_i,\zeta_j))_{i,j=1}^{r}\ud^r\zeta\nonumber\\
 \sum_{r=0}^{\infty}\frac{1}{r!}\int_{\left((t,\infty)\times \mathbb{R}\right)^r}
\det (K(\zeta_i,\zeta_j))_{i,j=1}^{r}\ud^r\zeta
 \leq&  \sum_{r=0}^{\infty}\frac{1}{r!}\left(\int_{(t,\infty)\times \mathbb{R}}
K(\zeta,\zeta)\ud\zeta\right)^r\nonumber\\
 =&\exp\left\{\int_{(t,\infty)\times \mathbb{R}}
K(\zeta,\zeta)\ud\zeta\right\}<\infty.
\end{align*}
Since 
correlation functions are non-negative, 
\begin{multline*}
\left|\frac{(-1)^r}{r!}\int_{\left((t,s)\times (-s,s)\right)^r}
\det (K(\zeta_i,\zeta_j))_{i,j=1}^{r}\ud^r\zeta\right|\\
\leq
\frac{1}{r!}\int_{\left((t,\infty)\times \mathbb{R}\right)^r}
\det (K(\zeta_i,\zeta_j))_{i,j=1}^{r}\ud^r\zeta,
\end{multline*}
so by the dominated convergence theorem
\begin{align*}
F(t) =&\lim_{s\to \infty}\mathbb{P}
\left[|Z\cap\left((t,s)\times (-s,s)\right)|=0\right]\nonumber\\
 =&\sum_{r=0}^{\infty}\frac{(-1)^r}{r!}\int_{\left((t,\infty)\times \mathbb{R}\right)^r}
\det (K(\zeta_i,\zeta_j))_{i,j=1}^{r}\ud^r\zeta.
\end{align*}
\end{proof}

\begin{proof}[Proof of Lemma \ref{convergenceoflastparticle}]
We begin with the convergence of the last particle distribution. 
The existence and distribution of a last particle for $Z_n$ is given 
by Lemma  \ref{lastparticlelemma}.
%Let 
%\begin{equation*}
%f_{nr}(\zeta_1,\ldots,\zeta_r)=\frac{(-1)^r}{r!}
%\det \left(K_n(\zeta_i,\zeta_j)\right)_{i,j=1}^r.
%\end{equation*} 
By Hadamard's inequality,
\begin{equation*}
\left|\frac{(-1)^r}{r!}
\det \left(K_n(\zeta_i,\zeta_j)\right)_{i,j=1}^r\right|
\leq \frac{1}{r!}
\prod_{i=1}^rK_n(\zeta_i,\zeta_i)\leq \frac{1}{r!}\prod_{i=1}^rB(\zeta_i).
\end{equation*}
Now
\begin{equation*}
%\left|\int_{\left((t,\infty)\times \mathbb{R}\right)^r}\frac{(-1)^r}{r!}\det \left(K_n(\zeta_i,\zeta_j)\right)_{i,j=1}^r\right|
%&\leq \int_{\left((t,\infty)\times \mathbb{R}\right)^r}\frac{1}{r!}
%\prod_{i=1}^rK_n(\zeta_i,\zeta_i)\nonumber\\
\int_{\left((t,\infty)\times \mathbb{R}\right)^r} 
\frac{1}{r!}\prod_{i=1}^rB(\zeta_i)\ud^r\zeta
=\frac{1}{r!}
\left(\int_{(t,\infty)\times \mathbb{R}}B(\zeta)\ud\zeta\right)^r
=\frac{C_t^r}{r!}
\end{equation*}
and
\begin{equation*}
\sum_{r=0}^{\infty}\frac{C_t^r}{r!}=e^{C_t}<\infty,
\end{equation*}
so applying the dominated convergence theorem twice gives the conclusion
\begin{align*}
\lim_{n\to \infty}F_n(t)
=&\lim_{n\to \infty}\sum_{r=0}^{\infty}
\int_{\left((t,\infty)\times \mathbb{R}\right)^r}
\frac{(-1)^r}{r!}
\det \left(K_n(\zeta_i,\zeta_j)\right)_{i,j=1}^r\ud^r\zeta\nonumber\\
 =&\sum_{r=0}^{\infty}
\int_{\left((t,\infty)\times \mathbb{R}\right)^r}\lim_{n\to \infty}
\frac{(-1)^r}{r!}
\det \left(K_n(\zeta_i,\zeta_j)\right)_{i,j=1}^r\ud^r\zeta\nonumber\\
 =&\sum_{r=0}^{\infty}
\int_{\left((t,\infty)\times \mathbb{R}\right)^r}
\frac{(-1)^r}{r!}
\det \left(K(\zeta_i,\zeta_j)\right)_{i,j=1}^r\ud^r\zeta\nonumber\\
 =&F(t).
\end{align*}
To verify the condition for weak convergence of point processes 
stated in Lemma \ref{convergencecriterion}, the same argument 
applies since an arbitrary compact set $A\subset \mathbb{R}^2$ 
is contained in a set of the form $(t,\infty)\times \mathbb{R}$.
\end{proof}

\end{document}